\documentclass[12pt, reqno]{amsart}
\usepackage[utf8]{inputenc}
\usepackage{amsthm, amsmath, amssymb, tikz, bm,diffcoeff}
\usepackage{verbatim}
\usepackage{graphicx}
\usepackage[linesnumbered,ruled,vlined]{algorithm2e}
\usepackage{float}
\usepackage{derivative}
\usepackage{mathtools}
\usepackage{CJK} 
\usepackage{xifthen}
\usepackage[colorlinks=true,
linkcolor=blue,citecolor=blue,
urlcolor=blue]{hyperref}

\usepackage[margin=1.2in]{geometry}
\usepackage{extarrows}

\usepackage[shortlabels]{enumitem}
\usepackage{todonotes}
\usetikzlibrary{calc,shapes, backgrounds}
\allowdisplaybreaks

\newtheorem{lemma}{Lemma}
\newtheorem{theorem}{Theorem}
\newtheorem{corollary}{Corollary}
\newtheorem{definition}{Definition}

\newtheorem{remark}{Remark}
\newtheorem{conjecture}{Conjecture}
\newtheorem{proposition}{Proposition}
\newtheorem{example}{Example}
\usepackage{xcolor}



\title{On the Ricci flow on Trees}

\author{
Shuliang Bai}
\thanks{Beijing Yanqi Lake Institute of Mathematical Sciences and Applications, China. 
Email address: baishuliang@bimsa.cn, corresponding author. }
 \author{Bobo Hua}
\thanks{School of Mathematical Sciences, LMNS, Fudan University, Shanghai
200433, China; 
Email address: bobohua@fudan.edu.cn.}

\author{Yong Lin} 
\thanks{Yau Mathematical Science Center, Tsinghua University, Beijing, 100084, China; Department of Mathematics,  Tsinghua University, Beijing, 100084, China.
Email address: yonglin@tsinghua.edu.cn.}
\author{Shuang Liu}
\thanks{School of Mathematics, Renmin University of China.
Email address: shuangliu@ruc.edu.cn.}

\date{}

\setlength {\marginparwidth }{2cm} 

\begin{document}
\maketitle

\begin{abstract}
In this paper, we study the evolution of metrics on finite trees under continuous-time  Ricci flows based on the Lin-Lu-Yau version of Ollivier Ricci curvature. We analyze long-time dynamics of edge weights and curvatures, providing precise characterizations of their limiting behaviors.  We prove that the Ricci flow converges to metric with zero curvature on edges whose normalized weights converge to positive values only if the tree is a caterpillar tree.  
\end{abstract}

\section{Introduction}




In Riemannian geometry, the Ricci flow initiated by Hamilton \cite{hami1982} is a powerful tool for studying the global structure of manifolds, which has numerical applications in general relativity and geometric analysis. The Ricci flow is an evolution equation for metrics on a Riemannian manifold, defined by:
\[ \frac{\partial g_{ij}}{\partial t} = -2 \text{Ric}_{ij} \]
where $ g_{ij} $ is the metric tensor and $ \text{Ric}_{ij} $ is the Ricci curvature tensor. The normalized Ricci flow, which adjusts the volume of the manifold to remain constant, is given by the following:

\[ \frac{\partial g_{ij}}{\partial t} = -2 \text{Ric}_{ij} + \frac{2}{n} R g_{ij} \]
where $ R $ is the scalar curvature and $ n $ is the dimension of the manifold. Hamilton \cite{hami1982} proved that, for a closed 3-manifold, starting from an initial metric with positive Ricci curvature, the normalized Ricci flow evolves into a round metric, emphasizing the profound relationship between geometric evolution and curvature. The fundamental contributions of the Ricci flow are the solutions of the Poincar\'e conjecture,  Thurston’s geometrization conjecture and the differentiable sphere conjecture, see \cite{GP1,GP3,GP2,HRF06,CZ06,MT07,KL08,BS09,MT14,RFTA1,RFTA2,RFTA3,RFTA4}, etc.

In graph theory, the notion of discrete curvature has become an important concept for understanding geometric properties of graphs, particularly when these graphs are equipped with weights on the edges. The development of curvature notions, such as the Bakry-Émery curvature \cite{YongLinYau}, the Ollivier Ricci curvature \cite{Ollivier}, the Forman Ricci curvature \cite{F}, etc., has enabled researchers to analyze and interpret structural properties of weighted graphs in ways that extend classical ideas from differential geometry to the discrete setting. In this paper, we study the continuous-time Ricci flow on trees based on the Lin-Lu-Yau version of Ollivier Ricci curvature \cite{LLY}.

The (un-normalized) Ricci flow on a graph $G=(V(G),E(G))$ introduced in \cite{bailin} is given by the following system of equations 
\begin{align}\label{eq:unnor_continuous}
 \frac{\partial {w}_e(t)}{\partial t} = -\kappa_e(t) {w}_e(t),   \end{align}
where ${w}_e(t) $ represents the weight of edge $e$ at time $t$ and $ \kappa_e(t) $ represents  the Lin-Lu-Yau version of Ollivier Ricci curvature on edge $ e $. The normalized Ricci flow on graphs, which adjusts the total edge weights to remain constant $1$, is described by the following system of equations
\begin{align}\label{eq:stochastic_continuous}
\frac{\partial w_e(t)}{\partial t} = -\kappa_e(t) w_e(t) + w_e \sum\limits_{h \in E(G)} \kappa_h w_h(t),
 \end{align}
where $ w_e(t) $ represents the normalized weight of edge $ e $ at time $ t $. 
For the above Ricci flows, it has been shown in \cite{bailin} that the evolution is well-defined and the solution exists for all time. This system of equations captures the dynamic evolution of the metric (edge weights) on a graph over time. Curvatures $\kappa_e(t)$ influence the rate of change of edge weights $w_e(t)$, with negative curvature leading to a decrease in weight and positive curvature leading to an increase. This behavior aligns with the intuitive understanding of the Ricci flow, where negative curvature tends to ``expand'' the space while positive curvature tends to ``shrink'' it.
By studying the evolution of the metric over time, one can gain insight into how the geometry of the graph changes, including the evolution of distances between vertices and other geometric properties.

Understanding the limits and behavior of Ricci flow solutions on graphs opens the door to a wide range of applications across multiple disciplines. In complex network modeling, the Ricci flow provides insights into the underlying structure and dynamics of networks, aiding in tasks such as community detection \cite{NLLG}, network alignment \cite{NLGG18}, data mining \cite{10.1093/comnet/cnw030} and so on.

For the theoretic part, the Ricci flow offers a novel perspective on geometric properties in discrete spaces, bridging the gap between differential geometry and graph theory. 
In the study of discrete curvature and Ricci flow, forman-Ricci flow has been applied to change detection in complex networks \cite{axioms5040026}, where the authors develop a geometric method to characterize dynamic effects in networks based on Forman's discretization of Ricci curvature. 
Cushing et al. in \cite{Liu2023BakryEmery} systematically explored the behavior of Ricci flow on graphs based on the Bakry-Émery curvature definition. They explore the interplay between graph structure and curvature evolution, providing valuable insights into how curvature conditions influence graph metrics and dynamics. See \cite{HLW24} for a variant of Bakry-Émery Ricci flow. For the discrete-time Ollivier Ricci curvature flow on a finite weighted graph, Li and Münch in \cite{RuoweiLiFlorentin} proved that the flow under the surgery converges to a constant curvature metric via a convergence result of a general nonlinear Markov chain with the monotonicity property.  Other flows on graphs and discrete settings have been developed to explore various aspects of discrete geometry and dynamics; see \cite{DG2003, flowonsurface, GLICKENSTEIN2005791, 10.1093/comnet/cnw030,  CombinatorialCalabiFlows, gu2018discreteII,  superflow, ge2021circle, comflow, MaYang2025ModifiedRicci,MaYang2025PiecewiseRicci}.

The key issue at hand is the Ricci flow's ability to converge. If the solutions converge, this suggests that a metric featuring constant curvature can be found.
To prove the convergence, classical techniques, such as gradient flows and Lyapunov stability analysis, are often useful. However, since there is no variational structure found for the Ricci flow on graphs, their direct applications seem to not work. 
In contrast to previous works, we restrict ourselves to the class of trees and prove the convergence of the Ricci flow via the combinatorial advantage of trees.

Trees are fundamental types of graphs, characterized by the absence of cycles. This simplicity allows the curvature to evolve in a predictable way, which makes trees particularly suited for the application of Ricci flow dynamics. 
The study of Ricci flow on trees is an essential first step in tackling the Ricci flow on general graphs. Understanding the Ricci flow on trees is a crucial foundation for further applications in discrete geometry and network analysis.

\subsection{ Main results.}

We first explore key features of the unnormalized weight and curvature, which are fundamental attributes valuable for further research.     We say that a solution to the Ricci flow {\it converges} 
     if  $\lim\limits_{t\to \infty}w_h(t)$ exists, being possibly infinite, for every edge $h\in E$. 
     And the limit of $w(\infty)$ is called the limit metric. 
     
     In our study, the curvature of a weighted graph is defined using a function $\gamma$, which plays a key role in assigning probabilities to the edges connected to each vertex.  In this paper, we usually select the parameter function $\gamma(x) = \frac{1}{x}$, see Section~\ref{sec:pre} for details. 
\begin{theorem}\label{thm:foundproperties}
For a tree  $T=(V, E)$ with an initial metric, the Ricci flow  \eqref{eq:unnor_continuous} converges. Moreover,  if $\lim\limits_{t\to\infty} w_e(t)= \infty$, then  for sufficiently large time $t\gg 1,$ $w_e(t)$ is increasing in $t$. 
\end{theorem}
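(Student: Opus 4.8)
The plan is to prove convergence by exploiting the tree structure to control the sign and boundedness of curvatures along the flow. The key structural fact about the Lin-Lu-Yau curvature on trees is that it is non-positive: every edge of a tree is a ``bridge'' and bridge edges carry curvature $\le 0$ (leaf edges being the borderline case where curvature can vanish). Consequently, equation \eqref{eq:unnor_continuous}, namely $\partial_t w_e = -\kappa_e w_e$ with $\kappa_e \le 0$, immediately gives that each $w_e(t)$ is \emph{non-decreasing} in $t$, since $w_e>0$ forces $-\kappa_e w_e \ge 0$. Thus monotonicity is essentially free, and every edge weight either converges to a finite limit or diverges to $+\infty$; in both cases $\lim_{t\to\infty} w_e(t)$ exists in $[0,\infty]$. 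This already yields the first assertion of the theorem, that the flow converges in the stated (possibly infinite) sense.

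The main obstacle is therefore not convergence per se but justifying the non-positivity of $\kappa_e$ along the flow and making the dichotomy rigorous. First I would record the explicit formula for $\kappa_e(t)$ in terms of the current weights $w(t)$ using the $\gamma(x)=1/x$ probability assignment from Section~\ref{sec:pre}, and verify that on a tree the curvature of any edge $e=\{x,y\}$ depends only on the local weighted degrees at $x$ and $y$ and the weight $w_e$ itself. The crucial computation is to show $\kappa_e \le 0$ holds structurally for all positive weight configurations on a tree, so that it persists for all $t$ regardless of how the metric evolves. I expect this to reduce to comparing the transport cost across $e$ against the edge length, where the absence of cycles means mass on the two sides of $e$ can only be matched through $e$, forcing the Wasserstein distance to be at least the sum of local discrepancies and hence giving $\kappa_e \le 0$.

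For the second assertion, suppose $\lim_{t\to\infty} w_e(t)=\infty$. Since $w_e$ is already non-decreasing, the content is the strict statement that $w_e$ is genuinely \emph{increasing} for large $t$, i.e.\ $\kappa_e(t)<0$ eventually rather than merely $\kappa_e(t)\le 0$. The plan is to argue by contradiction: if $w_e$ were eventually constant on a time interval, then $\kappa_e(t)=0$ there, but a divergent weight $w_e(t)\to\infty$ cannot coexist with $\kappa_e(t)\equiv 0$ because the curvature formula shows $\kappa_e$ can vanish only when the local geometry at $e$ is balanced in a way incompatible with $w_e$ growing without bound while the neighboring weights remain controlled. Concretely I would examine the asymptotic form of the curvature formula as $w_e\to\infty$ and show $\kappa_e$ stays bounded away from $0$ (strictly negative) in that regime, which forces strict growth.

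The step I expect to be the genuine crux is establishing this eventual strict negativity, since it requires understanding the joint asymptotics of all edge weights incident to $e$ and not just $w_e$ in isolation; in particular one must rule out the degenerate scenario where the neighboring weights also blow up at a matched rate that lets $\kappa_e\to 0$. Here the tree combinatorics should again help: by localizing to the two stars around the endpoints of $e$ and tracking which incident weights converge versus diverge, one can classify the possible asymptotic balances and confirm that a blow-up of $w_e$ forces the curvature to remain strictly negative for large $t$, completing the proof.
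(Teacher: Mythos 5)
Your proposal rests on a structural claim that is false in this setting: that every edge of a tree carries non-positive Lin--Lu--Yau curvature. With the paper's parameter choice $\gamma(x)=1/x$, the curvature of an edge $uv$ is
\[
\kappa_{uv}=\frac{2-d_u}{w_{uv}D_u}+\frac{2-d_v}{w_{uv}D_v},\qquad D_u=\sum_{z\sim u}\frac{1}{w_{uz}},
\]
and this is non-positive only for \emph{internal} edges (both endpoints of degree at least $2$). For a leaf edge $e=uv$ with $d_v=1$ one has $D_v=1/w_{uv}$, so the second term equals $1$ and $\kappa_e=1-\frac{d_u-2}{w_eD_u}$, which is positive whenever the sibling weights at $u$ are comparable to $w_e$; indeed $\kappa_e\equiv 1$ when $d_u=2$, and in the paper's own example the leaf weight evolves as $w_{vz}(t)=w_{vz}(0)e^{-t}$, which is strictly \emph{decreasing}. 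Hence monotonicity is not ``essentially free'': it holds only for internal edges (this part of your argument does match Proposition~\ref{pro:leafinternal1}), and the entire difficulty of the theorem lies with the leaf edges, whose weights may tend to $0$, to a finite positive value, or to $\infty$, and whose curvatures can change sign along the flow. Your outline gives no mechanism to exclude a leaf weight oscillating forever without a limit, which is exactly the scenario the paper must rule out: Proposition~\ref{pro:leavesdecrefaster} (Case 2) does this by analyzing the sequence of local minima, where $\kappa_e=0$ forces $d_u'=d_u-2$, combined with Lemma~\ref{lem:maxleaf} and the exponential decay of differences of sibling leaf weights from Proposition~\ref{pro:leaf=leaf}.

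The same faulty premise undermines your treatment of the second assertion. You take as given that an edge with $w_e(t)\to\infty$ is already non-decreasing, so that only strictness remains to be shown; but for a leaf edge no such monotonicity is available a priori, and eventual monotone growth is precisely what must be proved. The paper's Proposition~\ref{pro:ke<0always<0} establishes it not by an asymptotic expansion of $\kappa_e$ as $w_e\to\infty$ (your suggested route, which would anyway have to contend with neighboring weights blowing up at matched rates), but by a barrier argument: one computes $\kappa_e'$, shows that at any time $a$ with $\kappa_e(a)=0$ one has $\kappa_e'(a)<0$ (using that internal edges have strictly negative curvature at finite times and that the leaf of maximal initial weight dominates its siblings), and then applies Lemma~\ref{lem:halemma} to conclude $\kappa_e\le 0$ for all later times, contradicting any subsequent decrease. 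As written, your proof fails at every leaf edge of the tree and would need to be rebuilt around this leaf-edge analysis.
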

The demonstration proof of this theorem will be based on several propositions outlined below. 

\textbf{Key intermediate results on the weights are as follows:}
\begin{enumerate}  
        \item For a leaf $e$ and an internal edge $f$ incident to the same vertex, the internal edge's weight remains larger than the leaf's weight for large time, that is, 
        $$w_e(t)<w_f(t),\quad t\gg 1. $$
    
     \item The weights of two leaves $e, g$ incident to the same vertex 
evolve such that their initial order is preserved over time, their difference tends to zero and their ratio converges to $1$ as $t$ goes to infinity, that is, 
$$(w_e-w_g)(t)=(w_e(0)-w_g(0))e^{-t},$$
and 
$$\lim\limits_{t\to \infty} \frac{w_e(t)}{w_g(t)}= 1. $$

    \item For each edge $h$, if it holds that $\lim\limits_{t\to \infty}w_h(t)=\infty$, then $w_h(t)$ is increasing in $t$ for $t\gg1.$  
    
\end{enumerate}    
    


\textbf{Key intermediate results on the curvatures are follows:}

If $h$ is a leaf edge incident to internal vertex $u$,  there are four cases:
\begin{enumerate}  
\item $\kappa_h(t)\equiv 1$ if and only if  the degree $d_u=2$; 
\item $\lim\limits_{t\to \infty} \kappa_h(t)=\frac{1}{d_u-1}$ if and only if $u$ is incident to $d_u-1$ leaves;
\item $\lim\limits_{t\to \infty} \kappa_h(t)=0$ if and only if $u$ is incident to $d_u-2$ leaves; 
\item $\kappa_h(t)\leq -\frac{d'_u}{d_u}$ for large time $t$ if and only if $d_u>3$ and $u$ is incident to at most $d_u-3$ leaves.
\end{enumerate}  

If $h=uv$ is an internal edge where either $d_u=2$ or it connects to at least $d_u-2$ leaves (similarly for $v$), then $\lim\limits_{t\to \infty} \kappa_h(t)=0$.
    
In a weighted graph, we say the normalized Ricci flow converges to a metric with {\it constant curvature} if the curvatures on the set of edges with positive limit weights are the same. Note that this concept differs from the definition of constant curvature on a manifold, where it is defined for the entire metric. A more precise definition is provided below.
\begin{definition}\label{def:constantcurvature}
Let $T=(V, E)$ be a tree equipped with an initial metric $w \in \mathbb{R}_{>0}^{E}$, and let the metric $w(t)$ evolve under the normalized Ricci flow. We say that the normalized Ricci flow \textbf{converges to a metric with constant curvature $\tau$} if there exists a metric $w(\infty) \in \mathbb{R}_{\geq 0}^{E}$ such that  $\lim\limits_{t \to \infty} w(t) = w(\infty)$ and under the metric $w(\infty)$ the curvature on each edge in $E^{+}:=\{e\in E(T):w_e(\infty)>0\}$ is $\tau.$

\end{definition}

In the project of studying Ricci flows on trees, we propose the following conjecture.  
\begin{conjecture}\label{conj:constant}
For a  tree with an initial metric, the normalized Ricci flow converges to a metric with constant curvature. 
\end{conjecture}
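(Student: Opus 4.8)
The plan is to derive Conjecture \ref{conj:constant} from the convergence of the \emph{unnormalized} flow \eqref{eq:unnor_continuous} already supplied by Theorem \ref{thm:foundproperties}, using the scale invariance of the Lin--Lu--Yau curvature. Since $\kappa_e$ depends only on ratios of edge weights, it is homogeneous of degree $0$, so $\kappa_e(cw)=\kappa_e(w)$ for every $c>0$. A direct computation then shows that the normalization map $w\mapsto \hat w$, $\hat w_e:=w_e/\sum_h w_h$, carries solutions of \eqref{eq:unnor_continuous} to solutions of \eqref{eq:stochastic_continuous} without any change of time: writing $W=\sum_h w_h$ and using $\dot W=-\sum_h\kappa_h w_h$, one gets $\dot{\hat w}_e=-\kappa_e\hat w_e+\hat w_e\sum_h\kappa_h\hat w_h$, which is exactly \eqref{eq:stochastic_continuous} after invoking scale invariance to replace $\kappa_e(w)$ by $\kappa_e(\hat w)$. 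Hence the limit metric of the normalized flow is $\hat w_e(\infty)=\lim_{t\to\infty}w_e(t)/\sum_h w_h(t)$, computed from the unnormalized solution, and the candidate constant curvature is $\tau:=\min_{e\in E}\kappa_e^{\infty}$ with $\kappa_e^{\infty}:=\lim_{t\to\infty}\kappa_e(t)$.

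First I would establish a curvature convergence lemma: along \eqref{eq:unnor_continuous} the limit $\kappa_e^{\infty}$ exists for every edge. This follows from the weight convergence of Theorem \ref{thm:foundproperties}, because $\kappa_e$ is a continuous function of the finitely many incident weight ratios; where an incident weight diverges, the relevant ratios still have limits in $[0,\infty]$, and the explicit values in the leaf/internal curvature classification confirm the limits. Next comes a growth-rate analysis of the unnormalized weights. Integrating \eqref{eq:unnor_continuous} gives $w_e(t)=w_e(0)\exp\!\big(-\int_0^t\kappa_e(\sigma)\,\mathrm{d}\sigma\big)$, so the leading exponential rate of $w_e$ is $-\kappa_e^{\infty}$; the edges that dominate the total $\sum_h w_h$ are exactly those with the most negative limiting curvature. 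Normalizing, I would show $\hat w_e(\infty)>0$ precisely when $\kappa_e^{\infty}=\tau$ and $\hat w_e(\infty)=0$ otherwise, so that $E^{+}=\{e:\kappa_e^{\infty}=\tau\}$ and the curvature on every edge of $E^{+}$ equals $\tau$, which is Definition \ref{def:constantcurvature}. This picture unifies the cases: when no edge blows up, the unnormalized flow tends to a fixed point forcing $\kappa_e^{\infty}=0$ on the surviving edges ($\tau=0$), while when blow-up occurs the fastest-growing edges share the minimal negative curvature.

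The main obstacle is the borderline behavior inside the minimizing set $\{e:\kappa_e^{\infty}=\tau\}$: the first-order rate $-\kappa_e^{\infty}$ alone is not decisive, because two edges with the same limiting curvature can differ by subexponential factors, since $\int_0^t(\kappa_e(\sigma)-\tau)\,\mathrm{d}\sigma$ need not stay bounded, and such a discrepancy could push one edge to normalized weight $0$ while a sibling survives. Proving that these second-order corrections are comparable across $E^{+}$, so that normalization cannot split the minimizing set, is where the combinatorial geometry of the tree must enter, and it is precisely the mechanism behind the caterpillar characterization announced in the abstract: the configurations of internal vertices and attached leaves that can simultaneously realize $\tau$ are restricted. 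I would control these asymptotics using the monotonicity in Theorem \ref{thm:foundproperties} (edges with $w_e\to\infty$ are eventually increasing), the sibling-leaf identity $(w_e-w_g)(t)=(w_e(0)-w_g(0))e^{-t}$ with ratio tending to $1$, and the curvature classification to exclude incompatible degree configurations from $E^{+}$ (for instance, a leaf at a degree-$2$ vertex has $\kappa_e\equiv1$, so it can belong to $E^{+}$ only when $\tau=1$, i.e. when $T$ is a path).

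As a complementary check, and a possible fallback where the rate analysis is most delicate, I would monitor the weighted curvature variance $V(t):=\sum_e(\kappa_e-\bar\kappa)^2\hat w_e$ with $\bar\kappa:=\sum_h\kappa_h\hat w_h$, the same average appearing in \eqref{eq:stochastic_continuous}. Fixed points of \eqref{eq:stochastic_continuous} are exactly the metrics with $\kappa_e=\bar\kappa$ on their support, so the conjecture is equivalent to $V(t)\to0$; edges with $\hat w_e\to0$ drop out, leaving the requirement that the surviving curvatures coalesce to $\tau$. Establishing a differential inequality $V'(t)\le -cV(t)+o(1)$, or monotonicity of $\bar\kappa$, would close the argument, but the absence of a variational structure noted earlier makes a clean Lyapunov estimate uncertain, so I expect the growth-rate and normalization approach to remain primary, with the caterpillar analysis supplying the subexponential control.
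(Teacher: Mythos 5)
You should first note what you are up against: the statement is posed in the paper as an \emph{open conjecture}. The paper proves only the special case of constant curvature zero (Theorem~\ref{thm:limexist0}, the caterpillar characterization), and says explicitly in Section~6 that ``the key to the proof lies in demonstrating that the curvature on all edges converges.'' So there is no proof of the paper's to compare against; the question is whether your plan closes the conjecture, and it does not. The decisive gap is your ``curvature convergence lemma.'' You assert that $\kappa_e^{\infty}$ exists for every edge because $\kappa_e$ is a continuous function of incident weight ratios and ``where an incident weight diverges, the relevant ratios still have limits in $[0,\infty]$.'' This does not follow from Theorem~\ref{thm:foundproperties}: that theorem gives convergence of each individual $w_e(t)$ in $(0,\infty]$, but when two adjacent weights $w_e,w_f$ \emph{both} diverge to infinity, nothing prevents the ratio $w_e/w_f$ from oscillating forever, and $\kappa_e$ is exactly a function of such ratios. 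The paper's classification of curvature limits covers only leaf edges and internal edges at least one of whose endpoints has degree $2$ or is adjacent to at least $d-2$ leaves; for an internal edge both of whose endpoints carry three or more internal edges --- precisely the configuration that every non-caterpillar tree contains (Lemma~\ref{non-caterpillar}), and precisely the regime where weights blow up (Lemma~\ref{lem:noncatinf}) --- no convergence of $\kappa_e(t)$ is known. Your plan therefore assumes the open crux as its first step.

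Even granting curvature convergence, two further holes remain. First, you candidly flag that $\int_0^t(\kappa_e(\sigma)-\tau)\,\mathrm{d}\sigma$ may be unbounded, so two edges sharing the minimal limiting curvature can have incomparable growth and normalization may split the putative set $E^{+}$; but you offer no mechanism to close this beyond an appeal to ``the combinatorial geometry of the tree'' and a Lyapunov inequality $V'(t)\le -cV(t)+o(1)$ that you acknowledge you cannot establish (the paper stresses that no variational structure is known, which is exactly why its own proof of the zero-curvature case proceeds by edge-by-edge asymptotics instead). Second, Definition~\ref{def:constantcurvature} requires constancy of the curvature computed \emph{under the limit metric} $w(\infty)$ on $E^{+}$, not constancy of $\lim_{t\to\infty}\kappa_e(t)$; identifying the two requires continuity of the curvature formula at a degenerate metric (some weights zero, hence $D_u=\infty$), which is not automatic and is nowhere argued. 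Your reduction of the normalized flow to the unnormalized one via scale invariance is correct (it is relation~\eqref{equ:unnor-nor} in the paper), and your overall program --- prove curvature convergence, then sort edges by exponential rate --- matches the program the paper itself outlines; but as it stands the proposal is a research outline whose two essential steps are exactly the open problems, not a proof.
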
  

We provide an affirmative answer to a special case that the flow converges  metric with constant curvature zero. 
\begin{theorem}\label{thm:limexist0}  Let $T=(V, E)$ denote a tree with an initial metric $w \in \mathbb{R}^{E}_{>0}$. The normalized Ricci flow converges to a metric with constant curvature zero only if the tree can be derived from a path $P=v_1-v_2-\ldots -v_n$ by adding pending vertices such that
\begin{enumerate}  
    \item the terminal vertex $v_1$ ($v_n$ resp.) connects to $d_{v_1}-1$ ( $d_{v_n}-1$ resp.) leaves, and
    \item for any other vertex $v_i$ of $P$, either $d_{v_i}=2$ or $v_i$ is incident to $d_{v_i}-2$ leaves. 
\end{enumerate}  
See e.g. Figure~\ref{fig:hh} for the structure of such a tree, we call such tree as caterpillar tree. 

For caterpillar tree, the limit of the unnormalized weight can be $0$, $\infty$ or other finite values that can be influenced by the initial metric; the limit of the normalized weight of leaves is always $0$.   

For non-caterpillar tree, there must be edges with an increase in unnormalized weight to infinity. 
\end{theorem}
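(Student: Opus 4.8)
The plan is to establish the characterization by its contrapositive, together with a direct analysis of the caterpillar regime, throughout exploiting that the Lin--Lu--Yau curvature $\kappa_e$ is scale invariant and hence takes the same values along the normalized and unnormalized flows. By Theorem~\ref{thm:foundproperties} the unnormalized flow converges, so every weight ratio stabilizes and each limiting curvature $\kappa_e(\infty)$ exists; integrating $\partial_t w_e=-\kappa_e w_e$ gives $w_e(t)=w_e(0)\exp\!\bigl(-\int_0^t\kappa_e\,ds\bigr)$, so the sign of $\kappa_e(\infty)$ dictates whether $w_e$ decays, stabilizes, or blows up. The first step is a domination argument: if the normalized flow converges to constant curvature zero, then $\kappa_e(\infty)\ge 0$ for every edge. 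Indeed, if some edge had $\kappa_e(\infty)=c<0$, the edges attaining the most negative limiting curvature would grow like $e^{|c|t}$ and thus dominate the normalizing sum $\sum_h w_h(t)$; their normalized weights would tend to positive limits, placing them in $E^{+}$ while carrying nonzero curvature, contradicting Definition~\ref{def:constantcurvature}.

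Given $\kappa_e(\infty)\ge 0$ on all edges, I would read the tree structure off the curvature classification. Case~(4) for leaf edges is excluded, so no vertex $u$ with $d_u>3$ can carry a leaf together with at most $d_u-3$ leaves; equivalently, every vertex that carries a leaf has at most two non-leaf neighbors. The only configuration not yet controlled is a \emph{pure branching vertex} $u$ with $d_u\ge 3$ and no incident leaf: there case~(4) is vacuous, and the internal-edge criterion does not apply at $u$ because $u$ has neither degree $2$ nor at least $d_u-2$ leaves. For such $u$ I must show directly that some incident internal edge $uv$ satisfies $\kappa_{uv}(\infty)<0$; this both rules $u$ out and forces the non-leaf subgraph to have maximum degree $\le 2$. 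Being a connected subtree it is then a path, so $T$ is a caterpillar, and conditions~(1)--(2) merely record that terminal spine vertices carry $d-1$ leaves and interior spine vertices are either of degree two or carry $d-2$ leaves.

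The main obstacle is exactly this negativity at pure branching vertices. To control $\kappa_{uv}$ I would use the explicit optimal-transport formula for the Lin--Lu--Yau curvature on a tree: since $u$ has at least two neighbors besides $v$, the walk measure $\mu_u$ places mass on at least two branches lying on the $u$-side of $uv$, and any coupling to $\mu_v$ must push this mass across the edge, forcing $W_1(\mu_u,\mu_v)>d(u,v)$ and hence $\kappa_{uv}<0$. The delicate point is that this edge grows, $w_{uv}\to\infty$, so the transport weights themselves move; I expect the technical heart of the proof to be a self-consistent bootstrap showing that the branching edges incident to $u$ blow up together at comparable rates, so that the ratios entering the transport problem keep $\limsup_{t}\kappa_{uv}<0$. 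The scale invariance of $\kappa$ and the stabilization of weight ratios from Theorem~\ref{thm:foundproperties} are what make this bootstrap closable.

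Finally I would dispatch the two descriptive claims. For a non-caterpillar tree the spine branches, so some vertex has at least three non-leaf neighbors; if its degree exceeds three and it carries a leaf, case~(4) already gives a negatively curved leaf edge, and otherwise it is a pure branching vertex and the obstacle above supplies a negatively curved internal edge. In either case that edge has $w_e(t)\to\infty$, proving the last sentence. For a caterpillar, each spine edge $v_iv_{i+1}$ has both endpoints satisfying the internal-edge hypothesis, so $\kappa_{v_iv_{i+1}}(\infty)=0$; the leaves at terminal vertices have curvature $\to\frac{1}{d-1}>0$ and therefore vanish in the unnormalized flow, while the interior leaves have curvature $\to 0$ yet stay below the adjacent spine weight by intermediate weight result~(1). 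A direct integration of the now essentially decoupled caterpillar ODE system then shows that after normalization every leaf weight tends to $0$, whereas the spine weights may tend to $0$, to a finite positive value, or to $\infty$ depending on the initial metric, as claimed.
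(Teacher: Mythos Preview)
Your plan diverges from the paper's and, as written, has a genuine gap at the very first step. You assert that Theorem~\ref{thm:foundproperties} implies ``every weight ratio stabilizes and each limiting curvature $\kappa_e(\infty)$ exists.'' Theorem~\ref{thm:foundproperties} only gives that each $w_e(t)$ has a limit in $[0,\infty]$; when two adjacent edges both tend to infinity (which is exactly what happens at the branching vertices you care about), their ratio $w_e/w_f$ need not converge, and hence neither does $\kappa_{uv}=\frac{2-d_u}{\sum_x w_{uv}/w_{ux}}+\frac{2-d_v}{\sum_y w_{uv}/w_{vy}}$. The paper never claims general curvature convergence---that is precisely Conjecture~\ref{conj:constant}---and all of its curvature-limit statements (Proposition~\ref{Pro:k_etendto0}, Corollary~\ref{pro:kf=0}, Proposition~\ref{pro:kenegative}) are proved case by case. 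Your domination argument for ``$\kappa_e(\infty)\ge 0$ on every edge'' therefore rests on an unproven hypothesis, and your acknowledged ``main obstacle'' at pure branching vertices (the bootstrap showing comparable blow-up rates) is exactly a special case of the same unproven ratio convergence.

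The paper avoids this entirely by a global argument you do not use: the discrete Gauss--Bonnet identity $\sum_{e}\kappa_e(t)\equiv 2$. For a non-caterpillar tree, Lemma~\ref{non-caterpillar} locates at least three internal vertices $v$ each carrying $d_v-1$ leaves; at every such vertex the leaf curvatures sum to a limit of $1$ (Proposition~\ref{Pro:k_etendto0}(i)), so eventually $\sum_{\text{leaves}}\kappa_e>2+\delta$ and hence $\sum_{\text{internal}}\kappa_h<-\delta$. Integrating, the product $\prod_{\text{internal}}\tilde w_h$ blows up like $e^{\delta t}$ (Lemma~\ref{lem:noncatinf}), so the total unnormalized weight grows exponentially. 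Then any edge with $\kappa_f\to 0$ has unnormalized weight growing subexponentially and therefore normalized weight $\to 0$ (Lemma~\ref{the:intenraledgefiniteweight}); thus no zero-curvature edge can lie in $E^+$, contradicting constant curvature zero. This argument never needs $\kappa_e(\infty)$ to exist on a general edge, never needs to analyze pure branching vertices directly, and gives the ``some edge has $\tilde w\to\infty$'' claim for free. Your caterpillar analysis in the last paragraph is closer to the paper's Lemma~\ref{lem:caterconverge}, but note that even there the paper needs the exponential \emph{rate} of convergence of the endpoint leaf curvatures to control $\prod_{h\notin L}\tilde w_h$, not just the limits.
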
  

This theorem highlights the structural simplicity required for constant curvature zero. Intuitively, internal vertices with more internal edges create negative curvatures,  preventing the system from stabilizing to a flat metric. 

Since there is no known variational structure for the  Ricci flow, the proof of the convergence is a difficult task. The proof techniques presented in this paper rely on the simple structure of the trees, which determine distinct long-time behaviors of leaf edges and internal edges. We carefully study the structure of Ricci flow equations, and analyze the long time behaviors using analytic techniques combining with structural restriction of trees.


The structure of the paper is as follows: Section \ref{sec:pre} outlines the notation and foundational knowledge regarding curvature and the Ricci flow. In section \ref{sec:cur}, the curvature formula is introduced along with essential information about flows. Section \ref{subsec: unnorweight} establishes that the unnormalized weight on every edge monotonically grows and explores the  relationship between weights of adjacent edges. Section \ref{subsec: limitofcur} provides the necessary and sufficient conditions for curvature to converge to zero and to stay strictly positive or negative. Finally, Section \ref{subsec:zeorcur} examines the specific tree configurations that result in zero curvature on edges while maintaining strictly positive weights. 

\section{Preliminaries and Definitions}\label{sec:pre}
Let $G=(V, E, w)$ be a weighted graph on the vertex set $V$ and the edge set $E$ associated with a weight function $w:E\to (0, \infty)$ on the edge set $E$, we say $w$ defines a metric on $G$. For any two vertices $x, y$, we write $xy$ or $x\sim y$ to represent an edge $e=\{x, y\}$.  The length of a path is the sum of the edge lengths of the path; for any two vertices $u, v$,  the distance $d(x, y)$ is the length of a minimal weighted path among all paths that connect $u$ and $v$.
For any vertex $x\in V$, denote the neighbors of $x$ by $N(x)$ and by $d_x$ the degree of $x$.
Next, we recall the definition of the Ricci curvature defined on weighted graphs.

\begin{definition}\label{probabilitydistribution}
 Let $G=(V, E, d)$ be a graph with distance $d$. 
  Suppose that two probability distributions $\mu_1$ and $\mu_2$ on $V$  have finite support. A coupling between $\mu_1$ and $\mu_2$ is a probability distribution on $A$ on $V\times V$ with finite support so that
$$\sum\limits_{y \in V} A(x, y)=\mu_1(x) \ \text{and}\  \sum\limits_{x \in V} A(x, y)=\mu_2(y). $$

The transportation distance between two probability distributions $\mu_1$ and $\mu_2$ is defined as
\begin{align*}
W(\mu_1, \mu_2)=\inf\limits_{A} \sum\limits_{x, y\in V} A(x, y)d(x, y),\end{align*}

where the infimum is taken over all couplings $A$ between $\mu_1$ and $\mu_2$.
\end{definition}

Any coupling function provides a lower bound for the transport distance; the following definition can provide an upper bound for the transportation distance.
\begin{definition}\label{Lipschitz}
Let $G=(V, E, d)$ be a locally finite weighted graph. Let $f: V \to \mathbb{R}$. We say that $f$ is
1-Lipschitz if for any $x, y\in V$, 
$$f(x)-f(y)\leq d(x, y).$$
\end{definition}
 According to the duality theorem in linear optimization, as established by Kantorovich, the transportation distance can also be expressed as
\begin{align*}
W(\mu_1, \mu_2)=\sup\limits_{f} \sum\limits_{x\in V} f(x)[\mu_1(x)-\mu_2(x)],
\end{align*}
where the supremum is taken over all 1-Lipschitz functions $f$.

We will call $A\in V\times V$ satisfying the above infimum an optimal transportation plan and
call $f\in Lip(1)$ satisfying the above supremum an optimal Kantorovich potential
transporting $\mu_1$ to $\mu_2$.

\begin{definition} \cite{Ollivier,LLY,blhy}\label{def:alpharicci}
Let $G=(V, E, d)$ be a weighted graph where the distance $d$ is determined by the weight function $w$. Let $\gamma: \mathbb{R}_{+} \rightarrow \mathbb{R}_{+}$ represent an arbitrary one-to-one function.  For any $x, y\in V$ and $\alpha \in [0, 1]$,  the \textit{$\alpha$-Ricci curvature} $\kappa_{\alpha}$ is defined to be
$$\kappa_{\alpha}(x, y)=1-\frac{W(\mu_x^{\alpha}, \mu_y^{\alpha})}{d(x, y)},  $$
where the probability distribution $\mu_x^{\alpha}$ is defined as
\[\mu_x^{\alpha}(y)=
\begin{cases}
 \alpha,  & \text{if $y=x$}, \\
 \displaystyle (1-\alpha)\frac{\gamma(w_{xy})}{\sum\limits_{z\sim x} \gamma(w_{xz})},  & \text{if $y\sim x$},\\
 0,& \text{otherwise}.
\end{cases}\]

The Lin-Lu-Yau version of Ollivier Ricci curvature on vertices $(x, y)$ is defined as
\[\kappa(x, y)=\lim\limits_{\alpha\rightarrow 1} \frac{\kappa_{\alpha}(x, y)}{1-\alpha}. \]
\end{definition}



In the context of Ricci flow on graphs, the metric evolution is described in terms of the weights on the edges of a graph. Let $G$ be a graph with $m$ edges. The metric at any given time $t$ is represented by a vector $w(t) = \langle w_{e_1}(t), w_{e_2}(t), \dots, w_{e_m}(t) \rangle \in \mathbb{R}_{+}^m$, where each $w_{e_i}(t)$ is the weight associated with the edge $e_i$ at time $t$, and the parameter $t$ represents the time parameter. 

Given an initial metric $w_0 = \langle w_{e_1}(0), w_{e_2}(0), \dots, w_{e_m}(0) \rangle$, where each $w_{e_i}(0) > 0$, the unnormalized continuous Ricci flow on graphs is defined by the evolution of the weights $w_{e_i}(t)$ governed by a system of differential equations
\begin{equation*}
\begin{cases}
    w(0) = w_0,\\
    \frac{\partial w_e(t)}{\partial t} = -\kappa_e(t) w_e(t), \ \text{for all $e\in E(G)$},
\end{cases}
\end{equation*}
where $\kappa_e(t)$ is the curvature on edge $e$ at time $t$.




The normalized Ricci flow on graphs is a scaled version of the unnormalized Ricci flow. Given an initial metric $w_0=\langle w_{e_1}(0), w_{e_2}(0), \dots, w_{e_m}(0) \rangle$ such that  $\sum_{i=1}^{m} w_{e_i}(0) = 1$, we define a normalized version of the Ricci flow equations for graph:
\begin{equation*}
\begin{cases}
   w(0) = w_0,\\
    \frac{\partial w_e}{\partial t} = -\kappa_e w_e + w_e \sum\limits_{h \in E(G)} \kappa_h w_h.
\end{cases}
\end{equation*} 

Note that in order to maintain the invariance of the curvature $\kappa_e(t)$ under a metric scaling, it is preferable to consider the function $\gamma(x) = Ax^{a}$ for some real numbers $A$ and $a$.





Both the solutions of (\ref{eq:unnor_continuous})  and (\ref{eq:stochastic_continuous})  update the edge weight of $G$, and
the normalized flow differs from the unnormalized flow  only by rescaling in length of edges so that the total weight remains constant $1$, that is, $\sum_{i=1}^{m} w_{e_i}(t) = 1$ for any $t\geq 0$. 
For every edge $e$, we refer to the weight of edge $e$ after being updated by a solution of the normalized Ricci flow (\ref{eq:stochastic_continuous}) as $w_{e}(t)$, while for the unnormalized Ricci flow (\ref{eq:unnor_continuous}), if we write it as $\tilde{w}_{e}$.  Then,  the connection between the normalized and unnormalized flows is given by
\begin{align}\label{equ:unnor-nor}
 w_e(t)=\frac{\tilde{w}_e(t)}{\sum_{h\in E}\tilde{w}_h(t)}. 
\end{align}
To prevent symbol misuse, we use one symbol $w$ to represent the weight (normalized or unnormalized).

From these systems, we can see that the evolution of $w_e(t)$ depends on the instantaneous curvature $\kappa_e(t)$
and the interactions with other edge weights. This is a crucial aspect of how the Ricci flow dynamics unfold in discrete settings. 
In a previous study \cite{bailin}, we established conditions for the existence and uniqueness of global  solutions for Ricci flows on general graphs.


\begin{theorem}\label{thm:sol_continuous}
Let $\gamma:\mathbb{R}_{+} \rightarrow \mathbb{R}_{+}$ be a Lipschitz function over $[\delta, 1]$ for all $\delta > 0$. For any initial weighted graph $G$, by removing edges that violate the distance condition for any time,
there exists a unique solution $X(t)$, for all time $t\in [0, \infty)$,  to the system of ordinary differential equations in \eqref{eq:stochastic_continuous} (respectively \eqref{eq:unnor_continuous}). 
\end{theorem}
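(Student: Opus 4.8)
The plan is to view each of the two systems as an autonomous ordinary differential equation $\dot w=F(w)$ on the open set $\Omega\subset\mathbb{R}_{>0}^{m}$ of weight vectors for which every edge realizes the distance between its endpoints (so that the curvatures are well defined), and to deduce the claim from the Picard--Lindel\"of theorem together with an a priori bound that rules out finite-time escape from $\Omega$. The $e$-th component of $F$ is $-\kappa_e(w)\,w_e$ for the unnormalized flow \eqref{eq:unnor_continuous} and $-\kappa_e(w)w_e+w_e\sum_h\kappa_h(w)w_h$ for the normalized flow \eqref{eq:stochastic_continuous}; since the polynomial factors in $w$ are smooth, everything reduces to showing that the curvature map $w\mapsto\kappa_e(w)$ is locally Lipschitz on $\Omega$.

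To prove this I would fix a compact box $K=[\delta,\delta^{-1}]^{m}\cap\Omega$ and control every ingredient of Definition~\ref{def:alpharicci} on $K$. The shortest-path distance $d_w(x,y)=\min_P\sum_{e\in P}w_e$ is a minimum of finitely many linear functions of $w$, hence Lipschitz. The transition probabilities $\gamma(w_{xz})/\sum_{z'\sim x}\gamma(w_{xz'})$ are Lipschitz on $K$ because $\gamma$ is Lipschitz on $[\delta,\delta^{-1}]$ by hypothesis and the denominator is bounded below by a positive constant, so the measures $\mu_x^{\alpha},\mu_y^{\alpha}$ depend Lipschitz-continuously on $w$. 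The transportation distance $W(\mu_x^{\alpha},\mu_y^{\alpha})$ is the optimal value of a linear program whose cost matrix is $(d_w(u,v))$ and whose marginals are these measures; by the Kantorovich duality recalled above, it is the supremum over a fixed bounded polytope of $1$-Lipschitz potentials of an expression that is affine in the data, hence Lipschitz jointly in the cost and the marginals. Finally, for the Lin--Lu--Yau limit I would use that $\alpha\mapsto\kappa_\alpha(x,y)$ is concave and piecewise linear with $\kappa_1=0$, so that $\kappa(x,y)=\lim_{\alpha\to1}\kappa_\alpha/(1-\alpha)$ equals $\kappa_{\alpha}/(1-\alpha)$ for every $\alpha$ in a left-neighborhood of $1$ that can be chosen uniformly on $K$; this reduces $\kappa_e$ to a fixed Lipschitz combination of the quantities already controlled. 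Composing these maps shows that $F$ is locally Lipschitz on $\Omega$, and Picard--Lindel\"of yields a unique solution on a maximal interval $[0,T_{\max})$.

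It then remains to prove $T_{\max}=\infty$. Writing the unnormalized equation as $\frac{\partial}{\partial t}\log w_e(t)=-\kappa_e(t)$ and invoking the uniform curvature bound $|\kappa_e(w)|\le C$ valid for all $w\in\Omega$ (which follows from transport estimates and, for the distinguished choice $\gamma(x)=1/x$, keeps the curvatures bounded even as a weight tends to $0$ or $\infty$, as in \cite{bailin}), Gr\"onwall's inequality gives $w_e(0)e^{-Ct}\le w_e(t)\le w_e(0)e^{Ct}$. Hence on any finite interval the trajectory stays in a compact subset of $\mathbb{R}_{>0}^{m}$ bounded away from both $0$ and $\infty$, so it can leave $\Omega$ only through a distance-condition violation; but such a violation happens only when a growing edge weight reaches the length of an alternative path, at which instant the edge no longer realizes the distance of its endpoints and is removed, lowering $|E|$ by one. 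Thus at most $m$ surgeries can occur, the smooth flow applies between consecutive surgeries, and $T_{\max}=\infty$ follows; for a tree there is no alternative path, so no surgery is ever needed. The normalized statement is then obtained from the identity \eqref{equ:unnor-nor}, since $\sum_h\tilde w_h$ remains finite and positive on every finite interval, so that $w_e=\tilde w_e/\sum_h\tilde w_h$ inherits existence and uniqueness for all time and satisfies $\sum_e w_e\equiv 1$. I expect the regularity step to be the main obstacle: proving simultaneous Lipschitz dependence of the Wasserstein optimal value and of the $\alpha\to1$ limit, where the optimal transport plan, the active geodesic path, and the critical $\alpha$ all jump across lower-dimensional strata of $\Omega$ while the value function must remain Lipschitz across them.
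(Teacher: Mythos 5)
First, a point of orientation: the paper does not actually prove Theorem~\ref{thm:sol_continuous}; it imports it from \cite{bailin}, and the only argument it rehearses (Section~2) is the single delicate point of that proof, namely that for $\gamma(x)=1/x$ the \emph{product} $\kappa_{uv}w_{uv}$, viewed as a function of the weight vector $w$, is Lipschitz on $(\delta,1)^{E}$ and, since $\kappa_{uv}w_{uv}\to 0$ as $w_{uv}\to 0$, extends Lipschitz-continuously to all of $(0,1)^{E}$. That extension is what simultaneously yields uniqueness and rules out finite-time degeneration of weights. Your overall architecture (Picard--Lindel\"of on the open region where curvatures are defined, an a priori estimate to forbid finite-time escape, and a count of at most $m$ surgeries for the distance condition) matches the intended scheme, and your regularity discussion of $d_w$, the transition probabilities, Kantorovich duality, and the piecewise-linearity of $\alpha\mapsto\kappa_\alpha$ is a reasonable plan for the local Lipschitz step on compact boxes $K\subset\Omega$.

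The genuine gap is in your global-existence step. You invoke a uniform bound $|\kappa_e(w)|\le C$ ``valid for all $w\in\Omega$'' and feed it into Gr\"onwall to get $w_e(0)e^{-Ct}\le w_e(t)\le w_e(0)e^{Ct}$. No such bound exists at the generality of the theorem, which allows any $\gamma$ Lipschitz on $[\delta,1]$ for every $\delta>0$: the two-sided estimate $4-d_u-d_v\le\kappa_{uv}\le 1$ that the paper derives is specific to $\gamma(x)=1/x$. For instance, with $\gamma(x)=x$ (injective and Lipschitz on every $[\delta,1]$), formula \eqref{equ:kutov} gives
\[
\kappa_{u\to v}=\frac{2w_{uv}^2-\sum_{x\sim u}w_{ux}^2}{w_{uv}\sum_{z\sim u}w_{uz}}\longrightarrow-\infty
\qquad\text{as } w_{uv}\to 0
\]
with the other weights fixed, so curvatures are unbounded near the degenerate boundary and your compactness-of-trajectories argument collapses. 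The correct a priori object --- and the one \cite{bailin} actually controls --- is the vector field itself, i.e.\ $\kappa_e w_e$: it grows at most linearly in $w$ (precluding finite-time blow-up) and stays bounded, indeed Lipschitz with appropriate boundary behavior, as $w_e\to 0$ (precluding finite-time extinction, since by uniqueness for the extended Lipschitz field trajectories cannot reach the zero set). You mention this mechanism parenthetically for $\gamma(x)=1/x$, but your written proof routes the conclusion through the false uniform curvature bound instead. Two smaller, repairable omissions: the reduction $\kappa=\kappa_\alpha/(1-\alpha)$ for $\alpha$ near $1$ needs the weighted-graph version of the concavity/piecewise-linearity theorem together with an argument that the critical idleness is bounded away from $1$ uniformly on $K$; and deducing \emph{uniqueness} for the normalized flow from the unnormalized one via \eqref{equ:unnor-nor} requires the converse correspondence (reconstructing the total-length factor from a given normalized solution by solving a scalar ODE), which you assert but do not supply.
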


Given appropriate initial conditions, typically a positive initial weight $w_{uv}$ for each edge $uv$, the existence and uniqueness theorem ensures that there exists a unique solution $w_{uv}(t)$ defined for $t\geq 0$. However, this does not necessarily imply anything about long-time behavior or the limit of the solution as $t$ goes to infinity.  The solution limit may depend on additional conditions, such as the nature of $\kappa_{u v}(t)$, the initial conditions, and the structure of the graph. Therefore, we need to conduct further investigation into the specific behavior of the Ricci flow dynamics in the context of different types of graphs.

\subsection{ Ricci curvature on trees.}\label{sec:cur}

Let $T$ be a tree that is not a path or a star, where each vertex of the tree has a finite degree. In this context, an edge $uv$ in $T$ is referred to as a leaf edge if either vertex $u$ or $v$ is of degree $1$, while all other edges are considered as internal edges. A vertex $u$ is classified as a leaf vertex if its degree is $1$, as an internal vertex if its degree is at least $2$, and as a leaf-internal vertex if it is connected to a leaf edge. Let $d_u'$ be the number of leaves adjacent to the vertex $u$, and $d_u^{''}$ be the number of internal edges incident to $u$; we have $d_u=d'_u+d_u^{''}$. Throughout this paper, we assume that the tree $T$ is finite and connected.

Since there is exactly one path connecting two vertices in a tree,  it is easy to find an optimal Lipschitz function or an optimal transportation plan to calculate the Ricci curvature. 
Given an edge $uv$, at any time $t$, define a function $g$ on the vertex set $V(G)$ with 
$$g(x)=\begin{cases}
 w_{ux}+w_{uv} & \text{if}~~ x\sim u, x\neq v;\\
 w_{uv} & \text{if}~~ x=u\\
0  & \text{if}~~ x=v\\
 -w_{vy}  & \text{if}~~ y\sim v, y\neq u; \\
 0 & \text{otherwise}. 
\end{cases}
$$
It is straightforward to check that $g(x)$ serves as the Kantorovich potential for transport from $\mu_u^{\alpha}$ to $\mu_v^{\alpha}$. It can be shown, based on the proof of Lemma 3.2 in \cite{blhy}, that this function $ g $ serves as an optimal Lipschitz function.

For any vertex $u$, let \[D_u=\sum\limits_{v\sim u}\gamma(w_{uv}).\]  
By direct computation, one can derive the following formulas
 \begin{align*}
W(\mu_u^{\alpha}, \mu_v^{\alpha})&=\sum\limits_{x\in V} g(x)[\mu_u^{\alpha}(x)-\mu_v^{\alpha}(x)]\\
&=\sum\limits_{x\sim u,
x\neq v} (w_{ux}+w_{uv}) \Big((1-\alpha)\frac{\gamma(w_{xu})}{D_u} -0\Big) +w_{uv}\Big(\alpha- (1-\alpha)\frac{\gamma(w_{uv})}{D_v}\Big )\\&\quad \quad-\sum\limits_{y\sim v, y\neq u} w_{vy}\Big(0-(1-\alpha)\frac{\gamma(w_{vy})}{D_v}\Big)\\
&=(1-\alpha)\sum\limits_{x\sim u} (w_{ux}+w_{uv}) \frac{\gamma(w_{xu})}{D_u} -2(1-\alpha)w_{uv}\frac{\gamma(w_{uv})}{D_u}+w_{uv}\alpha \\
&\quad \quad - w_{uv}(1-\alpha)\frac{\gamma(w_{uv})}{D_v} + (1-\alpha)\sum\limits_{y\sim v} w_{vy}\frac{\gamma(w_{vy})}{D_v}-(1-\alpha)w_{vu}\frac{\gamma(w_{vu})}{D_v}.\\
\end{align*}
Then 
\begin{align*}
\kappa_{\alpha}(u, v)&=1-\frac{W(\mu_u^{\alpha}, \mu_v^{\alpha})}{w_{uv}}\\&=1-(1-\alpha)\sum\limits_{x\sim u} \frac{w_{ux}+w_{uv}}{w_{uv}}\frac{\gamma(w_{xu})}{D_u}+2(1-\alpha)\frac{\gamma(w_{uv})}{D_u} -\alpha \\&\quad \quad+(1-\alpha)\frac{\gamma(w_{uv})}{D_v} - (1-\alpha)\sum\limits_{y\sim v} \frac{w_{vy}}{w_{uv}}\frac{\gamma(w_{vy})}{D_v}+(1-\alpha)\frac{\gamma(w_{vu})}{D_v}, 
\end{align*}

Thus, 
\begin{align*}
\begin{split}
 \kappa(u, v)  &=1-\sum\limits_{x\sim u} \frac{w_{ux}+w_{uv}}{w_{uv}}\frac{\gamma(w_{xu})}{D_u}+2\frac{\gamma(w_{uv})}{D_u}+2\frac{\gamma(w_{uv})}{D_v}-\sum\limits_{y\sim v} \frac{w_{vy}}{w_{uv}}\frac{\gamma(w_{vy})}{D_v}\\
 &=1-\frac{1}{w_{uv}D_u}\sum\limits_{x\sim u} w_{ux}\gamma(w_{xu})-\sum\limits_{x\sim u} \frac{\gamma(w_{xu})}{D_u}+2\frac{\gamma(w_{uv})}{D_u}+2\frac{\gamma(w_{uv})}{D_v}\\&\quad\quad-\frac{1}{w_{uv}D_v}\sum\limits_{y\sim v} w_{vy}\gamma(w_{vy})\\
 &=-\frac{1}{w_{uv}D_u}\sum\limits_{x\sim u} w_{ux}\gamma(w_{ux}) +2\frac{\gamma(w_{uv})}{D_u}+2\frac{\gamma(w_{uv})}{D_v}-\frac{1}{w_{uv}D_v}\sum\limits_{y\sim v} w_{vy}\gamma(w_{vy})
 \end{split}
\end{align*}

\subsection{ The parameter function \text{$\gamma(x) = \frac{1}{x}$} and basic facts.}

Initially, we need to identify the kind of function $\gamma(x)$ that can assist in analyzing the behavior of $\kappa_{uv}$. Given the symmetry of $\kappa_{uv}$, it can be considered to comprise two parts associated with endpoints $u$ and $v$. The part associated with $u$ is denoted by $\kappa_{u\to v}$ and expressed by 
\begin{align}\label{equ:kutov}
   \kappa_{u\to v}= \frac{2w_{uv}\gamma(w_{uv})-\sum\limits_{x\sim u} w_{ux}\gamma(w_{ux})}{w_{uv}D_u}.
\end{align}

A natural choice is to use a simple, positive, and decreasing function such as $\frac{1}{x^{\alpha}}$
(where $\alpha>0$), as it controls the growth of the terms in the expression of $\kappa_{uv}$. In their investigation of the Euclidean framework of edge length dynamics involving Ricci curvature on graphs \cite{Gubser:2016htz}, the authors selected for $\alpha=2$. 
In our study, to maintain ease in mathematical computations, we choose $\alpha=1$. In particular, in situations where $d_u=1$, equation (\ref{equ:kutov}) consistently evaluates to $1$. Conversely, when $d_u=2$, equation (\ref{equ:kutov}) equates to $0$, thus simplifying the calculations in cases where the degree is $2$. Furthermore, as established in \cite{blhy}, with $\alpha=1$, the total Ricci curvature of all edges in a tree is $2$.

The choice of the parameter function $\gamma(x) = \frac{1}{x}$ is pivotal in driving the flow to constant curvature. 
Observe that the function $\gamma(x) = \frac{1}{x}$ is Lipschitz on the interval $[\delta, 1]$ for any $\delta > 0$. By substituting $\gamma(x) =  \frac{1}{x}$ into the expression for $\kappa_{uv}$, we derive a function $\kappa_{uv}w_{uv} : w \to R$ that remains Lipschitz continuous over the domain $(\delta, 1)^{E(T)}$ (refer to \cite{bailin}). Moreover, when $w_{uv}(t) \to 0$, it follows that $\kappa_{uv}(t)w_{uv}(t) \to 0$. Consequently, the function $\kappa_{uv}(t)w_{uv}(t) : w\to R$ is Lipschitz continuous on $(0, 1)^{E(T)}$, ensuring the uniqueness of the Ricci flow solution; see the justification in the proof of Theorem \ref{thm:sol_continuous} provided in \cite{bailin}. Thus, when setting $\gamma(x) = \frac{1}{x}$, we can confirm the existence of a unique long-time solution for the normalized Ricci flow (\ref{eq:stochastic_continuous}) (and thus for \eqref{eq:unnor_continuous}).

Furthermore, this choice $\gamma(x) = \frac{1}{x}$ has multiple key advantages in terms of curvature behaviors and edge weight evolution, which are explained in detail below:

\begin{itemize}
    \item \textbf{Bounded curvature:} The function \(\gamma(x) = \frac{1}{x}\) guarantees that the curvature remains bounded throughout the flow, even at the limit. By choosing this specific functional form, the Ricci flow equation effectively controls the growth of curvatures, preventing them from growing unbounded over time. 
    
    After simplification, the curvature on edge $uv$ at time $t$ can be calculated as follows:
\begin{align*}
\begin{split}
\kappa_{uv}(t)&=
      \frac{2-d_u}{w_{uv}(t)D_u(t)}+\frac{2-d_v}{w_{uv}(t)D_v(t)}\\
      &=\frac{2-d_u}{\sum\limits_{w\sim u}\frac{w_{uv}(t)}{w_{uw}(t)}}+\frac{2-d_v}{\sum\limits_{z\sim v}\frac{w_{uv}(t)}{w_{vz}(t)}}.
        \end{split}
          \end{align*}
      
By direct estimate, the curvature on every edge is restricted within an approximate range of $$4-d_u-d_v\leq \kappa_{uv}\leq 1. $$

It follows that the derivative function $\kappa'_{uv}(t)$ is also bounded, to see this, taking the derivative of $ \kappa_{uv}(t) $, we get \begin{align*}
  \kappa_{uv}^{\prime}&= \frac{d_u-2}{w_{uv}D_u}\left(\frac{\sum\limits_{x\sim u}\frac{\kappa_{ux}}{w_{ux}}}{ D_u}-\kappa_{uv}\right) + \frac{d_v-2}{w_{uv}D_v}\left(\frac{\sum\limits_{y\sim v}\frac{\kappa_{vy}}{w_{vy}}}{D_v}-\kappa_{uv}\right),
\end{align*} 
which is evidently bounded because the curvatures are bounded. 
Therefore,  $\kappa_{uv}(t)$ is a uniformly continuous function. 
      
    \item \textbf{The sum of curvatures is constant $2$.} The parameter function \(\gamma(x)\) also ensures that the total curvature of the tree remains constant and equal to 2, a critical property for the dynamics of the flow. 
    
    The sum of curvatures along all edges is as follows
\begin{align*}
     \sum_{uv\in E(T)} \kappa_{uv}(t) &=\sum_{uv\in E(T)} \frac{2-d_u}{w_{uv}(t)D_u(t)}+\frac{2-d_v}{w_{uv}(t)D_v(t)}
     \\&=\sum_{u\in V(T)} \frac{2-d_u}{D_u(t)}\sum_{v\sim u}\frac{1}{w_{uv}(t)}\\&=\sum_{u\in V(T)} (2-d_u)\\&=2.
\end{align*}
This is a discrete analog of the Gauss-Bonnet theorem  on a finite tree. These suggest that the Ricci flow system could potentially achieve equilibrium as $t$ goes to infinity.

What's more, 
 \begin{align}\label{equ:sumkfw}
 \begin{split}
    \sum_{h\in E(T)} \kappa_{h}w_{h} &=
    \sum_{u\in V(T)} \frac{(2-d_u)d_u}{D_u}\\
      &=\sum_{\text{leaf~}e}w_{e}+\sum_{u\in V, d_u\geq 2}\frac{(2-d_u)d_u}{D_u}.
            \end{split}
 \end{align}
It should be noted that the symbol $w$ in the above expression represents both the normalized and unnormalized weights, depending on the context.

\item \textbf{The presence of a leaf whose weight tends to zero.} 
When the edge weight is evolved using the unnormalized Ricci flow, one has
$$(\log w_{uv})^{\prime}=-\kappa_{uv},$$ Then 
$$(\log \prod\limits_{uv\in E(T)}w_{uv})^{\prime}=-\sum_{uv\in E(G)}\kappa_{uv}=-2, $$
which yields 
\begin{align}\label{producofweight}
\prod\limits_{uv\in E(T)}w_{uv}(t)=\Big(\prod\limits_{uv\in E(T)}w_{uv}(0)\Big)\times {\rm e}^{-2t}.    
\end{align}
That is, the total product of the unnormalized weight on all edges decreases exponentially fast to zero. 
 Moreover,  given that the curvature of the internal edges is always non-positive, there will be a leaf edge with an unnormalized weight approaching zero.
\end{itemize}

In summary, the function \(\gamma(x) = \frac{1}{x}\) is instrumental in achieving both the boundedness of the curvature and the preservation of the total curvature. These properties ensure the long-time convergence of the Ricci flow on trees and potentially contribute to the eventual convergence of the metric to a constant curvature metric.

\subsection{ An illustrative example of Ricci Flow on a tree.}

To  understand the dynamics of the continuous Ricci flow, we now provide an example.

\begin{example}\label{ex:interfinite}
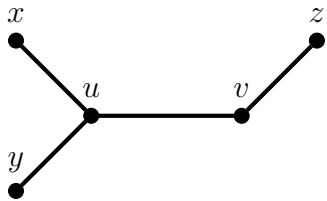
\begin{figure}[!h]
 \centering
\begin{tikzpicture}[scale=1, vertex/.style={circle, draw=black, fill=black}]
\node at (-1,1) [vertex] (v1) [label={$x$}, scale=0.5] {};
\node at (-1,-1) [vertex] (v2) [label={$y$}, scale=0.5] {};
\node at (0,0) [vertex] (v3) [label={$u$}, scale=0.5] {};
\node at (2,0) [vertex] (v4) [label={$v$}, scale=0.5] {};
\node at (3,1) [vertex] (v5) [label={$z$}, scale=0.5] {};
\draw [draw=black, ultra thick] (v1) -- (v3);
\draw [draw=black, ultra thick] (v2) -- (v3);
\draw [draw=black, ultra thick] (v3) -- (v4);
\draw [draw=black, ultra thick] (v4) -- (v5);
\end{tikzpicture}
\caption{A simple tree}
\end{figure}
The Ricci curvatures on the edges of tree in Figure \ref{ex:interfinite} are given by the following formulas:
\[
\kappa_{ux} = 1 - \frac{1}{w_{ux} D_u}, \quad
\kappa_{uy} = 1 - \frac{1}{w_{uy} D_u}, \quad
\kappa_{uv} = -\frac{1}{w_{uv} D_u}, \quad
\kappa_{vz} = 1,
\]
where \(D_u\) is defined as
\[
D_u = \frac{1}{w_{ux}} + \frac{1}{w_{uy}} + \frac{1}{w_{uv}}.
\]

We calculate the evolution of the edge weights under the unnormalized Ricci flow. For the edge \(vz\), we have
\[
\frac{\partial w_{vz}}{\partial t} = -w_{vz} \kappa_{vz} = -w_{vz}.
\]
This yields
\[
w_{vz}(t) = w_{vz}(0) e^{-t}.
\]

For the leaves \(ux\) and \(uy\), we have the following equations:
\[
\frac{\partial w_{ux}}{\partial t} = -w_{ux} \kappa_{ux} = -w_{ux} + \frac{1}{\frac{1}{w_{uv}} + \frac{1}{w_{ux}} + \frac{1}{w_{uy}}},
\]
and similarly,
\[
\frac{\partial w_{uy}}{\partial t} = -w_{uy} \kappa_{uy} = -w_{uy} + \frac{1}{\frac{1}{w_{uv}} + \frac{1}{w_{ux}} + \frac{1}{w_{uy}}}.
\]
Then we get $w_{ux}(t)-w_{uy}(t)=(w_{ux}(0)-w_{uy}(0))e^{-t}$.

Assuming that \(w_{ux}(t) = w_{uy}(t)\) and that \(w_{ux}(t) \leq w_{uv}(t)\) as \(t\) becomes large(which will demonstrate its validity in Proposition \ref{pro:leafinternal1}), we get the following inequality for the rate of change:
\[
-\frac{2}{3} w_{ux} < \frac{\partial w_{ux}}{\partial t} < -\frac{1}{2} w_{ux}.
\]
Thus, we have the following bounds for the edge weight \(w_{ux}(t)\):
\[
w_{ux}(0) e^{-\frac{2}{3} t} < w_{ux}(t) < w_{ux}(0) e^{-\frac{1}{2} t},
\]
which implies that \(w_{ux}(t) \to 0\) as \(t \to \infty\). Similarly, \(w_{uy}(t) \to 0\)  as \(t \to \infty\).

For the internal edge \(uv\), we calculate the evolution of its weight as follows:
\[
\frac{\partial w_{uv}}{\partial t} = -w_{uv} \kappa_{uv} = \frac{1}{\frac{1}{w_{uv}} + \frac{1}{w_{ux}} + \frac{1}{w_{uy}}}.
\]
This inequality gives the lower bound
\[
\frac{\partial w_{uv}}{\partial t} \geq \frac{1}{3} w_{ux}.
\]
On the other hand, we also have the upper bound
\[
\frac{\partial w_{uv}}{\partial t} \leq \frac{1}{2} w_{ux}.
\]
Therefore, the weight \(w_{uv}(t)\) satisfies the following inequality:
\[
0<w_{uv}(t) \leq w_{uv}(0)+ w_{ux}(0) -\frac{1}{2} w_{ux}(0) e^{-\frac{1}{2} t}.
\]
Thus, \(w_{uv}(t)\) is a growing function that is bounded from above, then \(\lim\limits_{t \to \infty} w_{uv}(t)\) exists and is finite. The behavior of \(w_{uv}(t)\) depends on the initial conditions. The plots of \(w_{uv}(t)\) in the subsequent figures illustrate this evolution.

\begin{figure}[H]
\centering
\includegraphics[scale=0.5]{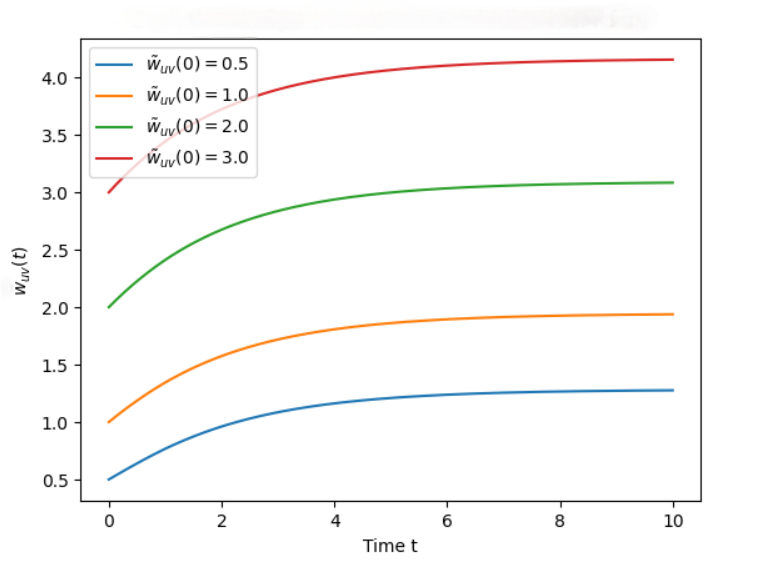}
\caption{Evolution of unnormalized weight on edge uv for different initial values}\label{fig:simpletreeaa}
\end{figure}

Using the relation (\ref{equ:unnor-nor}), the normalized weights
$w_{ux}(t),w_{uy}(t), w_{vz}(t)$ tend to zero and $w_{uv}(t)$ tends to $1$.

\end{example}



\subsection{Some lemmas}
The following lemmas and theorems will be used in the proof. 
 \begin{lemma}
[Barbalat's Lemma ]\label{barlem1}
Let $f: R \rightarrow R$ be a uniformly continuous function. Suppose that $\lim\limits_{t \rightarrow \infty} \int_0^t f(\tau) \mathrm{d} \tau$ exists and is finite. Then, $\lim _{t \rightarrow \infty} f(t)=0$
\end{lemma}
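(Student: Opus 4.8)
The plan is to argue by contradiction, converting the hypothesis on the integral into a Cauchy-type statement and then showing that a failure of $f(t)\to 0$ forces the integral to change by a fixed amount over arbitrarily late intervals, which is incompatible with convergence.

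First I would record the consequence of the hypothesis. Writing $F(t):=\int_0^t f(\tau)\,d\tau$, the assumption that $F(t)$ converges to a finite limit as $t\to\infty$ means exactly that $F$ satisfies the Cauchy criterion: for every $\eta>0$ there is a threshold $T_\eta$ such that $\left|\int_a^b f(\tau)\,d\tau\right|<\eta$ for all $b>a\ge T_\eta$. Next I would suppose for contradiction that $f(t)\not\to 0$ as $t\to\infty$. By negating the definition of the limit, there exist $\varepsilon>0$ and a sequence $t_n\to\infty$ with $|f(t_n)|\ge\varepsilon$ for every $n$, and by passing to a subsequence I may take the points $t_n$ to be spaced as far apart as I wish.

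The key step is to use uniform continuity to propagate the pointwise lower bound $|f(t_n)|\ge\varepsilon$ onto an entire interval of fixed length. By uniform continuity there is a single $\delta>0$, independent of $n$, such that $|f(s)-f(t)|<\varepsilon/2$ whenever $|s-t|<\delta$. Consequently, for every $\tau\in[t_n,t_n+\delta]$ I get $|f(\tau)|\ge|f(t_n)|-|f(\tau)-f(t_n)|>\varepsilon/2$. Since $f$ is in particular continuous and stays bounded away from $0$ on $[t_n,t_n+\delta]$, the intermediate value theorem forbids any sign change there, so $f$ has constant sign on that interval and therefore $\left|\int_{t_n}^{t_n+\delta} f(\tau)\,d\tau\right|=\int_{t_n}^{t_n+\delta}|f(\tau)|\,d\tau\ge \frac{\varepsilon\delta}{2}$.

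Finally I would close the argument by choosing $\eta=\varepsilon\delta/2$ in the Cauchy criterion, obtaining $T_\eta$, and then selecting any $t_n\ge T_\eta$: the interval $[t_n,t_n+\delta]$ yields an integral of modulus at least $\varepsilon\delta/2=\eta$, directly contradicting the criterion. I expect the only genuinely essential point to be the uniform-$\delta$ step: uniformity of the continuity modulus is what keeps the lower bound $\varepsilon\delta/2$ the \emph{same} for arbitrarily large $t_n$, and this is precisely what mere pointwise continuity cannot provide, since a pointwise-continuous $f$ could carry ever-narrowing spikes whose integrals are negligible (for which the conclusion genuinely fails). The constant-sign observation via the intermediate value theorem is a small but necessary companion, while the Cauchy-criterion bookkeeping is routine.
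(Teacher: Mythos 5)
Your proof is correct, and it is the standard contradiction argument for Barbalat's lemma: Cauchy criterion for the convergent integral, a sequence $t_n\to\infty$ with $|f(t_n)|\ge\varepsilon$, uniform continuity to get a uniform $\delta$ so that $|f|>\varepsilon/2$ on $[t_n,t_n+\delta]$, constant sign there, hence the integral over that interval has modulus at least $\varepsilon\delta/2$, contradicting Cauchy. The paper itself states this lemma without proof (it is quoted as a classical auxiliary result), so there is no in-paper argument to compare against; your write-up supplies exactly the canonical proof, and your closing remark correctly identifies uniformity of the modulus of continuity as the essential hypothesis. The only cosmetic point: uniform continuity with threshold $\delta$ gives the bound for $|\tau-t_n|<\delta$, so at the closed endpoint $\tau=t_n+\delta$ you should either shrink to $\delta/2$ or note that the bound extends to the endpoint by continuity --- this does not affect the integral estimate.
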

 For continuously differentiable functions, Barbalat’s lemma can be restated as
follows.

 \begin{lemma}
\label{barlem2}
If $f(t)$ has a finite limit as $t\to \infty$ and if $f^{\prime}(t)$ is uniformly continuous (or $f^{\prime\prime}(t)$ is bounded), then $\lim_{t\to\infty} f^{\prime}(t)=0$.  
\end{lemma}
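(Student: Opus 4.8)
The plan is to derive this restated form directly from Barbalat's Lemma (Lemma~\ref{barlem1}) by applying that lemma to the derivative $f'$ itself. The key observation is that the two hypotheses offered for $f'$ --- uniform continuity, or boundedness of $f''$ --- are arranged so that the second implies the first; hence it suffices to treat the case where $f'$ is uniformly continuous.

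First I would dispose of the alternative hypothesis. If $f''(t)$ is bounded, say $\abs{f''(t)}\leq M$ for all $t$, then for any $s,t$ the Mean Value Theorem applied to $f'$ gives $\abs{f'(t)-f'(s)}\leq M\abs{t-s}$, so $f'$ is Lipschitz and therefore uniformly continuous. Thus in either case we may assume $f'$ is uniformly continuous, and we set $g:=f'$, which is a uniformly continuous function on $\mathbb{R}$.

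Next I would verify the integral hypothesis of Lemma~\ref{barlem1} for $g$. Since $f$ is continuously differentiable, the Fundamental Theorem of Calculus gives
\[
\int_0^t g(\tau)\,\mathrm{d}\tau=\int_0^t f'(\tau)\,\mathrm{d}\tau=f(t)-f(0).
\]
By assumption $f(t)$ has a finite limit $L$ as $t\to\infty$, so $\int_0^t g(\tau)\,\mathrm{d}\tau\to L-f(0)$, which is finite. Both hypotheses of Lemma~\ref{barlem1} --- uniform continuity of $g$ and convergence of $\int_0^t g$ to a finite limit --- are therefore satisfied, and the lemma yields $\lim_{t\to\infty}g(t)=\lim_{t\to\infty}f'(t)=0$, as desired.

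There is no substantive obstacle here, since the analytic content is entirely contained in Lemma~\ref{barlem1}; the work is just to recognize that finiteness of $\lim_{t\to\infty}f(t)$ is precisely the statement that the integral of $f'$ converges. The only point needing minor care is the reduction of the ``$f''$ bounded'' case to uniform continuity of $f'$, which the Mean Value Theorem settles in one line.
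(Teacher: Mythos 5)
Your proof is correct and follows exactly the route the paper intends: the paper states Lemma~\ref{barlem2} as a direct restatement of Barbalat's Lemma~\ref{barlem1}, and your argument---reducing the bounded-$f''$ case to uniform continuity of $f'$ via the Mean Value Theorem, then noting that $\int_0^t f'(\tau)\,\mathrm{d}\tau = f(t)-f(0)$ converges---is precisely the standard derivation being invoked. No gaps.
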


 
\begin{lemma}[Gronwall lemma]\label{lem:Gronwall}
 Let $\phi:[a, T] \rightarrow \mathbb{R}$ be a nonnegative differentiable function for which there exists a constant $C$ such that
$$
\phi^{\prime}(t) \leq C \phi(t) \quad \text { for all } t \in[a, T].
$$
Then
$$
\phi(t) \leq e^{C t} \phi(a) \quad \text { for all } t \in[a, T].
$$
For a more general result,  assume that $\phi^{\prime}(t) \leq C(t) \phi(t)$ (where $C$ is a nonnegative summable function), then
$$
\phi(t) \leq \phi(a) \exp \left(\int_{a}^{t} C(\tau) d \tau\right).
$$

Moreover, if
$\phi^{\prime}(t)=C(t) \phi(t)+f(t), t \in [0, T]$ (where $C(t), f(t)$ are continuous functions),
then
$$
\phi(t)=\phi(a) e^{\int_a^t C(\rho) d \rho}+\int_a^t f(s) e^{\int_s^t C(\rho) d \rho} d s,~ t \in [0, T].
$$
\end{lemma}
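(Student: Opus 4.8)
The plan is to obtain all three assertions from a single device, the integrating factor, so that each reduces either to an elementary monotonicity statement or to a direct application of the fundamental theorem of calculus. Throughout I would set
\[
    E(t) := \exp\left(-\int_a^t C(\tau)\,d\tau\right),
\]
which is strictly positive and absolutely continuous on $[a,T]$ because $C$ is summable (continuous in the last part), and which satisfies $E'(t) = -C(t)E(t)$ almost everywhere. The key auxiliary object is $\psi(t) := \phi(t)E(t)$.

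For the two inequality assertions I would first compute, by the product rule wherever $\phi$ is differentiable,
\[
    \psi'(t) = \big(\phi'(t) - C(t)\phi(t)\big)E(t) \leq 0,
\]
the inequality using the hypothesis $\phi'(t)\le C(t)\phi(t)$ together with $E(t)>0$. Hence $\psi$ is non-increasing, so $\psi(t)\le \psi(a)=\phi(a)$ for all $t\in[a,T]$; dividing by $E(t)>0$ yields $\phi(t)\le \phi(a)\exp\!\big(\int_a^t C(\tau)\,d\tau\big)$, which is exactly the general second assertion. The first assertion is then the constant-$C$ specialization: there $\int_a^t C\,d\tau = C(t-a)$, giving the sharp bound $\phi(t)\le\phi(a)e^{C(t-a)}$, and since $C\ge 0$ and $a\ge 0$ in the intended setting this is dominated by the stated $\phi(a)e^{Ct}$.

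For the equality assertion the same substitution converts the linear ODE into an exact derivative: from $\phi'(t)=C(t)\phi(t)+f(t)$ one gets $\psi'(t)=f(t)E(t)$, and integrating from $a$ to $t$ gives $\psi(t)-\psi(a)=\int_a^t f(s)E(s)\,ds$. Multiplying through by $E(t)^{-1}=\exp\!\big(\int_a^t C(\rho)\,d\rho\big)$ and using $E(s)/E(t)=\exp\!\big(\int_s^t C(\rho)\,d\rho\big)$ recovers the stated representation formula. The argument is essentially routine, so there is no genuine obstacle; the only point needing care is the regularity bookkeeping, namely that when $C$ is merely summable the function $\psi$ is absolutely continuous and the inequality $\psi'\le 0$ holds only almost everywhere. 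I would therefore invoke the fact that an absolutely continuous function with almost-everywhere nonpositive derivative is non-increasing, rather than appeal to a pointwise mean value theorem.
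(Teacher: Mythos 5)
The paper never proves this lemma: it is listed among the background facts (together with Barbalat's lemma and Lemma~\ref{lem:halemma}) and is used as a black box, so there is no paper proof to compare against. Your integrating-factor argument is the standard textbook proof and it is correct: with $\psi(t)=\phi(t)\exp\bigl(-\int_a^t C(\tau)\,d\tau\bigr)$ the differential inequality makes $\psi$ non-increasing, which gives both inequality statements after dividing by the positive factor, and for the linear equation the same substitution turns the ODE into $\psi'(t)=f(t)E(t)$, whose integration is exactly the stated variation-of-constants formula. Two points of care, which you largely anticipated: first, the sharp conclusion of the constant-coefficient case is $\phi(t)\le\phi(a)e^{C(t-a)}$, and the paper's bound $e^{Ct}\phi(a)$ only follows from it when $Ca\ge 0$; this is an imprecision in the paper's statement (harmless in its applications, where $C\ge0$ and $a\ge0$) rather than in your argument. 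Second, in the summable-$C$ case your final appeal to ``absolutely continuous with a.e.\ nonpositive derivative implies non-increasing'' needs $\psi=\phi E$ to be absolutely continuous, and that requires $\phi$ itself to be absolutely continuous --- everywhere-differentiability alone does not grant this. The gap is fillable (since $(\phi')^{+}\le C\phi$ is summable and a differentiable function with summable derivative is absolutely continuous), and it evaporates entirely in the cases the paper actually uses, where $C$ is continuous, $E$ is $C^1$, $\psi$ is differentiable everywhere, and the mean value theorem already yields monotonicity; but as written that one sentence asserts more regularity than has been established.
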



\begin{lemma}\label{lem:halemma}
Let $f$ be a $C^1$ function on $[a, b]$. If $f(a)< 0$, and when  $f=0$  we have $df/dt< 0$, then $f(b)\leq 0$. 
\end{lemma}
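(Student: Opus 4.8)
The plan is to argue by contradiction, isolating the \emph{last} place where $f$ crosses the zero level before $b$. Suppose $f(b)>0$. Since $f(b)>0$ and $f$ is continuous there is $\varepsilon>0$ with $f>0$ on $[b-\varepsilon,b]$, so the set $\{s\in[a,b]:\ f(t)>0\ \text{for all}\ t\in[s,b]\}$ is nonempty; set $c:=\inf\{s\in[a,b]:\ f>0\ \text{on}\ [s,b]\}$. Because $f(a)<0$, the point $a$ does not lie in this set, so $a<c\le b$. By continuity one gets $f>0$ on $(c,b]$ together with $f(c)=0$: indeed $f(c)\ge 0$ as a limit of positive values from the right, while $f(c)>0$ would (by continuity) force $f>0$ slightly to the left of $c$, contradicting the definition of the infimum.

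The decisive step is then to compare the sign of the one-sided derivative at $c$ with the hypothesis. Since $f(c)=0$ and $f(t)>0$ for $t\in(c,b]$, the forward difference quotients $\frac{f(t)-f(c)}{t-c}=\frac{f(t)}{t-c}$ are positive; letting $t\to c^{+}$ and using $f\in C^1$ (so that the right derivative equals $f'(c)$) yields $f'(c)\ge 0$. On the other hand, $c$ is a zero of $f$, so the hypothesis gives $f'(c)\le 0$.

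The main obstacle is that, with the hypothesis stated as $df/dt\le 0$ at zeros, these two inequalities only give $f'(c)=0$, which is \emph{not} a contradiction; in fact the conclusion genuinely fails for this weak form, as $f(t)=t^{3}$ on $[-1,1]$ shows ($f(-1)<0$, the unique zero $t=0$ has $f'(0)=0\le 0$, yet $f(1)=1>0$). To close the argument one must use the strict condition $df/dt<0$ whenever $f=0$, which is what the intended applications of this lemma supply: under strict negativity, $f'(c)<0$ contradicts $f'(c)\ge 0$, so $f(b)>0$ is impossible and $f(b)\le 0$. Thus the entire content of the proof reduces to ruling out this degenerate tangential crossing, and the correct reading of the hypothesis is the strict one.
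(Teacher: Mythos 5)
The paper states Lemma~\ref{lem:halemma} without proof, so there is no argument of the authors to compare yours against; judged on its own, your analysis is correct, and it moreover exposes a real defect in the statement itself. Your localization of the last zero crossing is sound: assuming $f(b)>0$, the point $c=\inf\{s\in[a,b]:\ f>0 \text{ on } [s,b]\}$ satisfies $a<c\le b$ (your one-line justification that $a$ not lying in the set forces $a<c$ is slightly glossed, but continuity of $f$ near $a$ fills it in), $f(c)=0$, $f>0$ on $(c,b]$, and the forward difference quotients give $f'(c)\ge 0$. You are also right that against the stated hypothesis $f'(c)\le 0$ this yields only $f'(c)=0$ and no contradiction, and that the lemma as written is actually false: your counterexample $f(t)=t^{3}$ on $[-1,1]$ is valid, since its only zero is $t=0$, where $f'(0)=0\le 0$, yet $f(1)=1>0$.

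Your proposed repair is also the right one for this paper. In the only place the lemma is invoked, the proof of Proposition~\ref{pro:ke<0always<0}, the authors establish the strict inequality $\kappa_e'(a)<0$ at every time $a$ with $\kappa_e(a)=0$ before applying the lemma; under that strict hypothesis your contradiction closes ($f'(c)<0$ against $f'(c)\ge 0$), and in fact one gets more: running the same difference-quotient argument at the \emph{first} zero shows $f$ can never vanish, so $f<0$ on all of $[a,b]$. An alternative repair that keeps the weak inequality is to assume $f'\le 0$ whenever $f\ge 0$ (not merely at zeros): then $f'\le 0$ on $(c,b]$, so $f$ is non-increasing on $[c,b]$ and $f(b)\le f(c)=0$, again a contradiction. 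Either corrected form is what Lemma~\ref{lem:halemma} should say, and either suffices for the paper's application.
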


\subsection{Outline of the proof for the results}

The interplay between the monotonic behavior of edge weights and the limiting behavior of curvatures under the Ricci flow is a central aspect of our analysis. The proof begins by establishing the asymptotic behavior of unnormalized weights, showing that for sufficiently large time, the weight on every edge exhibits monotonic behavior. This monotonicity holds without relying on specific knowledge about the curvature’s behavior, providing a robust framework for understanding the dynamics of the flow at the level of individual edges.

Subsequently, we demonstrate that the curvature on certain edges either converges to zero or remains strictly positive. This analysis begins with the leaf edges and then extends to internal edges,  avoiding the need for the precise information about the weight ratios of adjacent internal edges.  

With our understanding of the weight and curvature of leaves, as well as their connection to internal edges, we are able to describe the trees that attain zero constant curvature.

\section{Asymptotic behavior of weights under the Ricci flow} 
\label{subsec: unnorweight}
To establish the existence of a limit metric for the Ricci flow, we start by studying the asymptotic behavior of  unnormalized weight.


\begin{proposition}\label{pro:leafinternal1}
Let the metric $w$ of the tree $T$ evolve under the Ricci flow (\ref{eq:unnor_continuous}). 
Let $f$ represent an arbitrary internal edge. Then, we have

(ia) $\lim\limits_{t\to \infty} w_f> 0$ exists, being positive and possibly infinite;  (ib) if
 $\lim\limits_{t\to \infty} w_f$ is finite,  $\lim\limits_{t\to \infty} \kappa_f=0$. 
 
If $f$ is adjacent to leaves, let $u$ be the vertex connected to leaves, let $e$ represent a leaf incident to $u$, we have the following fact
 
 (ii) $w_e(t)<w_f(t)$ for large time $t\gg 1$.  

\end{proposition}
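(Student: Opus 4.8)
The plan is to dispatch the three assertions in order, using the monotonicity from (ia) as an input to (ii). For (ia), I would start from the curvature formula specialized to $\gamma(x)=1/x$, namely for $f=uv$
\[
\kappa_f = \frac{2-d_u}{w_f D_u} + \frac{2-d_v}{w_f D_v}.
\]
Since $f$ is internal, both endpoints are internal vertices, so $d_u,d_v\geq 2$ and hence $\kappa_f\leq 0$. Under the unnormalized flow $(\log w_f)'=-\kappa_f\geq 0$, so $w_f$ is non-decreasing and stays $\geq w_f(0)>0$; a monotone function bounded below converges in $(0,\infty]$, giving (ia). For (ib), if $\lim_{t\to\infty} w_f$ is finite then $\log w_f$ converges, so $\int_0^t \kappa_f(\tau)\,d\tau = \log w_f(0)-\log w_f(t)$ has a finite limit; since the choice $\gamma(x)=1/x$ makes $\kappa_f$ uniformly continuous (as recorded in Section~\ref{sec:pre}), Barbalat's Lemma (Lemma~\ref{barlem1}) yields $\lim_{t\to\infty}\kappa_f=0$.

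For (ii), the key object is the difference $g:=w_e-w_f$, and the decisive step is to compute $g'$ from the weight-times-curvature identities. For the leaf $e=ux$ (so $d_x=1$ and $D_x=1/w_e$) one gets $\kappa_e w_e = w_e + \frac{2-d_u}{D_u}$, while for the internal edge $f=uv$ one gets $\kappa_f w_f = \frac{2-d_u}{D_u}+\frac{2-d_v}{D_v}$. Since $e$ and $f$ are both incident to $u$, they share the same quantity $D_u$, so the terms $\frac{2-d_u}{D_u}$ cancel and
\[
g' = -\kappa_e w_e + \kappa_f w_f = -w_e + \frac{2-d_v}{D_v}.
\]
Because $v$ is internal, $d_v\geq 2$ gives $\frac{2-d_v}{D_v}\leq 0$, whence $g'\leq -w_e<0$, i.e. $g$ is strictly decreasing for all time.

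To conclude, I would argue by contradiction: suppose $g(t)\geq 0$ for all $t$. Then $w_e\geq w_f\geq w_f(0)>0$, so $g'\leq -w_f(0)$ and $g(t)\leq g(0)-w_f(0)\,t\to-\infty$, contradicting $g\geq 0$. Hence $g(t_1)<0$ for some $t_1$, and since $g$ is strictly decreasing, $g(t)<0$ for all $t\geq t_1$, i.e. $w_e(t)<w_f(t)$ for $t\gg 1$. (Alternatively, once $g$ is negative its persistence follows directly from Lemma~\ref{lem:halemma}, since $g'<0$ whenever $g=0$.)

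The main obstacle is purely in (ii), and it is conceptual rather than technical: recognizing that the shared vertex $u$ forces the two ``$u$-contributions'' $\frac{2-d_u}{D_u}$ to cancel, which collapses the otherwise messy expression for $g'$ into $-w_e+\frac{2-d_v}{D_v}$. Once this identity is in hand, the sign control $g'\leq -w_e<0$ together with the lower bound $w_f\geq w_f(0)$ from (ia) makes the conclusion immediate; the only point requiring care is that $g$ decreasing does not by itself force $g$ negative, which is exactly why the lower bound on $w_f$ (and the resulting linear decay estimate) is needed to rule out a nonnegative limit.
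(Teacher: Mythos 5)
Your proposal is correct and follows essentially the same route as the paper: $\kappa_f\le 0$ from the curvature formula gives monotone convergence for (ia); (ib) is a Barbalat-type argument (you apply Lemma~\ref{barlem1} to $\kappa_f$ directly, while the paper applies Lemma~\ref{barlem2} to $w_f$ via boundedness of $w''_f$, both resting on the boundedness of $\kappa'_f$); and (ii) hinges on the identical cancellation identity $(w_e-w_f)'=-w_e+\frac{2-d_v}{D_v}\le -w_e<0$. Your finish for (ii), using $w_f(t)\ge w_f(0)$ to force linear decay of $w_e-w_f$ and a contradiction, is a slightly cleaner version of the paper's case analysis on the limits of $w_f-w_e$ and $w_e$, but the substance is the same.
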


\begin{proof}
According to the Ricci flow, for any edge $f$, 
$$ \quad w_f(t)= w_f(0)e^{\int_0^t -\kappa_f(\tau) \, d\tau}. $$
Given that the initial condition $w_f(0)>0$, it follows that
$$w_f(t)> 0 \ \  \text{for all time $t$}.$$

For the internal edge $f=uv$, since $\kappa_f(t)\leq  0$ for all time $t$,  $w_f$ is always non-decreasing. Thus, $\lim\limits_{t\to \infty} w_f$ exists and $\lim\limits_{t\to \infty} w_f \geq w_f(0)$. 
 The result of (ia) follows. 
 
From the equation $w'_f = -\kappa_f w_f$, the second derivative can be derived as
\begin{align*}
    w^{\prime\prime}_f = -\kappa'_f w_f + \kappa^2_f w_f.
\end{align*}
If $\lim\limits_{t \to \infty} w_f$ is finite, by $\kappa'_f$ is bounded, $w^{\prime\prime}_f$ then remains finite. By Barbalat's Lemma \ref{barlem2}, it follows that the derivative $ w'_f$ tends to zero, implying that $\lim\limits_{t \to \infty} \kappa_f = 0$. Consequently, the result of (ib) holds.
 


For any leaf edge $e$ incident to $u$, we consider the  difference between their weights; by calculation, we have:
\begin{align*}
  w_f'(t)-w_e'(t) &=\kappa_ew_e-\kappa_fw_f\\
  &= w_e+\frac{2-d_u}{D_u}-(\frac{2-d_u}{D_u}+\frac{2-d_v}{D_v}) \\
  &=w_e+\frac{d_v-2}{D_v}.
\end{align*}
Since $f$ is not a leaf, $d_v\geq 2$. 
The term $w_e+\frac{d_v-2}{D_v}$ remains strictly positive throughout finite time periods, and can only approach zero at $t=\infty$.
Thus, $w_f(t)-w_e(t)$ is an increasing monotonic function, then $\lim\limits_{t \to \infty}(w_f(t)-w_e(t))$ exists or is infinity. If  $\lim\limits_{t \to \infty}(w_f(t)-w_e(t))=\infty$, clearly, the result of (ii) follows. If  $\lim\limits_{t \to \infty}(w_f(t)-w_e(t))$ is finite, then 
$$\lim\limits_{t \to \infty}w_e=\lim\limits_{t \to \infty}w_f(t)- \lim\limits_{t \to \infty}(w_f(t)-w_e(t))$$ exists or is infinity. 

Clearly, if $\lim\limits_{t \to \infty}w_e=0$, we are done. Otherwise, let $\lim\limits_{t \to \infty}w_e=c>0$ possibly $c=\infty$. Then  $w_f'(t)-w_e'(t)\geq c>0$, we then  have 
$w_f(t)-w_e(t)>w_f(0)-w_e(0)+ct>0$. 
The result of (ii) follows.

\end{proof}

Next, we further explore the development of weights $w_e(t)$ and $w_f(t)$, it is necessary to analyze how the weights of two leaves are correlated.

\begin{proposition}\label{pro:leaf=leaf}
Let the metric $w$ of the tree $T$ evolve under the  Ricci flow (\ref{eq:unnor_continuous}). 
If two leaves $e$ and $g$ are both incident to the same vertex $u$, then we have the following:

(i) If $w_e(0)\leq w_g(0)$, then $w_e(t)\leq w_g(t)$  for all $t>0$; conversely, if $w_e(0)>w_g(0)$
$w_e(t)>w_g(t)$ for all $t>0$.\\

(ii) $\lim\limits_{t\to \infty} (w_e(t)-w_g(t))=0$;\\

(iii) $\frac{w_e(t)}{w_g(t)}$ converges  to $1$ as $t\to \infty$.

\end{proposition}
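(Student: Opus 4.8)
The plan is to reduce everything to the explicit flow equation for a leaf edge. Writing $e=ux$ with $x$ a leaf vertex, we have $d_x=1$ and $D_x=\gamma(w_e)=1/w_e$, so the curvature formula collapses to $\kappa_e = 1 + \frac{2-d_u}{w_e D_u}$, and the flow \eqref{eq:unnor_continuous} becomes $w_e' = -w_e + \frac{d_u-2}{D_u}$. The crucial observation is that $\frac{d_u-2}{D_u}$ depends only on the shared endpoint $u$, hence is \emph{identical} for $e$ and $g$. Subtracting the two flow equations gives $(w_e-w_g)' = -(w_e-w_g)$, so that $(w_e-w_g)(t) = (w_e(0)-w_g(0))e^{-t}$. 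Parts (i) and (ii) are then immediate: the factor $e^{-t}$ is positive, so the sign of $w_e-w_g$ never changes, which proves (i), and the difference tends to $0$, which proves (ii).

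For (iii) I would first establish monotonicity. Differentiating $r=w_e/w_g$ and using $w_e'=-\kappa_e w_e$, $w_g'=-\kappa_g w_g$ gives $r' = r(\kappa_g - \kappa_e)$, while the curvature formula yields $\kappa_g-\kappa_e = \frac{2-d_u}{D_u}\left(\frac1{w_g}-\frac1{w_e}\right)$. Since $T$ is not a path, the vertex $u$ carrying two leaves has degree $d_u\ge 3$ (otherwise $x-u-y$ would be the whole tree), so $2-d_u<0$; thus $\kappa_g-\kappa_e$ has a fixed sign, opposite to that of $w_e-w_g$, which by (i) is the constant sign of $w_e(0)-w_g(0)$. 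Hence $r$ is monotone, and because $r$ lies entirely on one side of $1$ (again by (i)), it is monotone and bounded, and in fact moves toward $1$; in particular $\lim_{t\to\infty}r(t)$ exists.

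The \emph{main obstacle} is to show this limit equals $1$ and not some other constant. Because both leaf weights may tend to $0$, knowing $w_e-w_g\to0$ does not by itself force $r\to1$; writing $r(t)-1 = \frac{(w_e(0)-w_g(0))e^{-t}}{w_g(t)}$ shows that the claim $r\to1$ is \emph{equivalent} to $w_g(t)\,e^{t}\to\infty$, i.e. to $w_g$ decaying strictly slower than $e^{-t}$. To prove this I would pass to the smallest leaf $h$ at $u$ (least initial weight), so $w_h\le w_g$ for all $t$ by (i). If $\lim w_g=\infty$ the claim is trivial; otherwise the leaves at $u$ stay bounded, each ratio $w_h/w_l\le1$, and each internal contribution $w_h/w_f$ stays bounded since internal weights are bounded below by Proposition~\ref{pro:leafinternal1}(ia). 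This bounds $w_h D_u$ by a constant $K$, whence $1-\kappa_h=\frac{d_u-2}{w_hD_u}\ge\frac{d_u-2}{K}>0$ uniformly in $t$. Since $\bigl(\log(w_h e^{t})\bigr)' = 1-\kappa_h$, integration gives $w_h e^{t}\to\infty$, and $w_g\ge w_h$ then forces $w_g e^{t}\to\infty$. Combining this limit identification with the monotonicity from the previous paragraph yields that $r$ converges monotonically to $1$, completing (iii).
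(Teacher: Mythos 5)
Your proofs of (i) and (ii) are exactly the paper's argument: the term $\frac{d_u-2}{D_u}$ in the leaf flow equation is common to $e$ and $g$, so $(w_e-w_g)'=-(w_e-w_g)$ and the difference is $(w_e(0)-w_g(0))e^{-t}$. For (iii) you take a genuinely different route, and part of it is a real improvement: your sign analysis of $r'=r(\kappa_g-\kappa_e)$ with $\kappa_g-\kappa_e=\frac{2-d_u}{D_u}\left(\frac{1}{w_g}-\frac{1}{w_e}\right)$ actually establishes the \emph{monotonicity} asserted in the statement, which the paper's proof never addresses (the paper only derives the bound $0\le \frac{w_e}{w_g}-1\le \left(\frac{w_e(0)}{w_g(0)}-1\right)e^{\frac{2-d_u}{d_u}t}$ for the minimal leaf, giving exponential convergence to $1$ but not monotonicity).

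However, your limit identification has a genuine gap: the dichotomy ``either $\lim_{t\to\infty}w_g=\infty$, or the leaves at $u$ stay bounded'' is not exhaustive. At this stage of the paper nothing guarantees that leaf weights have limits; a priori $w_g$ could oscillate with $\limsup w_g=\infty$ and $\liminf w_g<\infty$, in which case neither branch of your argument applies. Ruling out exactly this oscillatory behavior is a nontrivial part of the later Proposition~\ref{pro:leavesdecrefaster} (its Case~2 ``Claim''), whose proof cites the present proposition, so you cannot appeal to it. Fortunately the patch uses only what you already have: since $d_u\ge 3$, $\kappa_g=1-\frac{d_u-2}{w_gD_u}<1$ identically, so $\bigl(\log(w_ge^{t})\bigr)'=1-\kappa_g>0$ and $w_ge^{t}$ is strictly increasing, hence has a limit in $(0,\infty]$. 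If the limit is $+\infty$ you are done; if it is finite, then $w_h\le w_g\le Ce^{-t}$ is bounded, your estimate $w_hD_u\le K$ applies and gives $w_he^{t}\ge w_h(0)e^{\frac{d_u-2}{K}t}\to\infty$, contradicting $w_he^{t}\le w_ge^{t}\le C$. With this repair, (iii) is complete. For comparison, the paper avoids any case analysis: for the minimal leaf $g$ it bounds $w_gD_u\le d_u$ (using the leaf ordering together with $w_g<w_f$ for internal edges, from Proposition~\ref{pro:leafinternal1}(ii), valid for large $t$), hence $\kappa_g\le\frac{2}{d_u}$ and $w_g(t)\gtrsim e^{-\frac{2}{d_u}t}$, which beats the $e^{-t}$ decay of the difference and, unlike your trajectory-dependent constant $K$, yields the explicit exponential rate that is reused later in Lemma~\ref{lem:caterconverge}.
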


\begin{proof}
Let $e$ and $g$ be two leaves incident to $u$. Then $d_u\geq 3$.  
Since
\begin{align*}
  w_e'(t)-w_g'(t) &=\kappa_gw_g-\kappa_ew_e\\
  &= w_g+\frac{2-d_u}{D_u}-(w_e+\frac{2-d_u}{D_u}) \\
  &=w_g-w_e, 
\end{align*}
we have   $$w_e(t)-w_g(t)=(w_e(0)-w_g(0))e^{-t}.$$ Results in (i) and (ii) follow. 

 Let $g$ denote the leaf with the least initial weight among all leaves sharing $u$.  It suffices to prove the ratio $\frac{w_e(t)}{w_g(t)}$ tends to $1$ for the result of (iii). 
We observe that 
$$\kappa_g=1-\frac{d_u-2}{w_gD_u}\leq 1-\frac{d_u-2}{d_u}, $$
which gives 
$$w'_g=-\kappa_gw_g\geq -(1-\frac{d_u-2}{d_u})w_g.$$

Therefore, for any other leaf $e$,
$$0\leq \frac{w_e(t)-w_g(t)}{w_g(t)}\leq \frac{(w_e(0)-w_g(0))e^{-t}}{w_g(0) e^{-(1-\frac{d_u-2}{d_u})t}}\leq \frac{w_e(0)-w_g(0)}{w_g(0)e^{\frac{d_u-2}{d_u}t}}.$$
Hence,
\[0\leq \frac{w_e(t)}{w_g(t)}-1\leq \Big(\frac{w_e(0)}{w_g(0)}-1\Big)  e^{\frac{2-d_u}{d_u}t}, \]
which indicates that
$$\lim\limits_{t\to \infty}\frac{w_e(t)}{w_g(t)}=1,$$
$\frac{w_e(t)}{w_g(t)}$ exponentially decay to $1$. 

We proved the result (iii). 

\end{proof}

The next statement is a consequence of the preceding propositions.
\begin{lemma}\label{lem:maxleaf}
    Let the metric $w$ of the tree $T$ evolve under the unnormalized Ricci flow (\ref{eq:unnor_continuous}). 
Let $e$ represent a leaf edge incident to vertex $u$ where $d_u\geq 3$.  If $e$ attains the maximum initial value $w_e(0)$, then $\kappa_e(t)\geq \kappa_g(t)$ for any other leaf $g$ incident to $u$, and  $\kappa_e(t)> 1-\frac{d_u-2}{d_u'}$ for $t\in [0, \infty)$. 
\end{lemma}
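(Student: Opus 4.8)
The plan is to reduce everything to the explicit leaf-edge curvature formula and then exploit the weight-ordering result already established in Proposition \ref{pro:leaf=leaf}. For a leaf edge $e=ux$ with $d_x=1$, substituting $\gamma(x)=1/x$ and using $D_x=1/w_{ux}$ collapses the $x$-contribution to $1$, so that
$$\kappa_e = 1 - \frac{d_u-2}{w_e D_u}, \qquad D_u = \sum_{w\sim u}\frac{1}{w_{uw}}.$$
The key observation is that the constant $d_u-2$ (which is $\geq 1$ since $d_u\geq 3$) and the quantity $D_u$ are common to every leaf incident to $u$; only the factor $w_e$ in the denominator distinguishes one leaf from another. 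Hence $\kappa_e$ is a strictly increasing function of the leaf's own weight, with all other data at $u$ held fixed.

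For the first assertion, I would argue as follows. Since $e$ attains the maximum initial value among the leaves incident to $u$, we have $w_e(0)\geq w_g(0)$ for every such leaf $g$. Proposition \ref{pro:leaf=leaf}(i) then propagates this ordering to all times, giving $w_e(t)\geq w_g(t)$ for all $t\geq 0$. Because $d_u-2>0$ and $D_u>0$, the monotonicity of $\kappa$ in the weight immediately yields $\kappa_e(t)\geq \kappa_g(t)$.

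For the second assertion, I would first rewrite the desired inequality $\kappa_e > 1-\frac{d_u-2}{d_u'}$; dividing out the positive factor $d_u-2$ shows it is equivalent to $w_e D_u > d_u'$. Splitting $D_u$ according to whether the incident edge is a leaf or an internal edge gives
$$w_e D_u = \sum_{g\text{ leaf at }u}\frac{w_e}{w_g} + \sum_{f\text{ internal at }u}\frac{w_e}{w_f}.$$
Each of the $d_u'$ leaf ratios is $\geq 1$ by the weight ordering just established, so the first sum is at least $d_u'$; and the second sum is strictly positive because $u$ carries at least one internal edge. Adding these gives $w_e D_u > d_u'$, which is the claim.

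The argument is essentially routine once the curvature formula is in hand; the only point requiring genuine care is the strictness of the final inequality. This relies on $u$ having at least one internal edge, i.e. $d_u''\geq 1$: if $u$ were incident only to leaves then $T$ would be a star, which is excluded by the standing hypothesis that $T$ is neither a path nor a star. I would flag this as the one place where the global structural assumption on $T$ is actually used, and otherwise the whole statement is a direct consequence of the weight monotonicity in Proposition \ref{pro:leaf=leaf} combined with the closed form of $\kappa_e$.
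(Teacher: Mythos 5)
Your proof is correct and takes essentially the same route as the paper's: both use the closed form $\kappa_e = 1 - \frac{d_u-2}{w_e D_u}$, propagate the initial weight ordering through Proposition \ref{pro:leaf=leaf}(i) to get $\kappa_e(t)\geq\kappa_g(t)$, and then split $w_e D_u$ into the leaf ratios (summing to at least $d_u'$) plus the internal ratios (strictly positive) to obtain the strict bound. Your explicit observation that strictness requires $d_u''\geq 1$, i.e.\ the standing assumption that $T$ is not a star, is left implicit in the paper but is the same argument.
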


\begin{proof}
Observing the formula for leaf $e$, we have $$\kappa_e=1-\frac{d_u-2}{w_eD_u}=1-\frac{d_u-2}{\sum\limits_{\text{leaf } g} \frac{w_e(t)}{w_g(t)}+\sum\limits_{\text{internal } f} \frac{w_e(t)}{w_f(t)}}.$$
Clearly, if $w_e(t)\geq w_g(t)$ for $t\geq 0$, $\kappa_e(t)\geq \kappa_g(t)$.  The first assertion follows from 
the result  (i) of Proposition \ref{pro:leaf=leaf}.  
Besides, we have 
\[
 \sum_{\text{leaf } g} \frac{w_e(t)}{w_g(t)} \geq d_u'
\]
and 
\[\sum_{\text{internal } f} \frac{w_e(t)}{w_f(t)}>0, \]
It follows that for all time $t$, 
 $$\kappa_e(t)> 1-\frac{d_u-2}{d_u'}. $$

\end{proof}

Next, we study the limit behavior of the unnormalized weights on leaves.
\begin{proposition}\label{pro:leavesdecrefaster}
Let the metric $w$ of the tree $T$ evolve under the Ricci flow \eqref{eq:unnor_continuous}. 
Let $e$ represent a leaf edge
incident to vertex $u$. 
Then

(i) If there exists an internal edge $f\sim e$ such that $\lim\limits_{t\to \infty} w_f <\infty$, then $\lim\limits_{t\to \infty} w_e=0$;

(ii) $\lim\limits_{t\to \infty} w_e$ exists, being possibly infinite. 

\end{proposition}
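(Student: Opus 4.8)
The plan is to read both statements off the scalar ODE satisfied by a leaf weight. If $e$ is a leaf incident to the internal vertex $u$ (its other endpoint having degree one), the curvature formula specializes to $\kappa_e = 1 - \frac{d_u-2}{w_e D_u}$, so that under \eqref{eq:unnor_continuous}
\[
w_e' = -\kappa_e w_e = -w_e + \frac{d_u-2}{D_u}.
\]
Everything hinges on the nonnegative source term $s(t):=\frac{d_u-2}{D_u}$, together with the fact, used repeatedly, that internal edges have nonpositive curvature.

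For (i) the plan is to show $s(t)\to 0$ and then integrate. Any edge adjacent to $e$ must be incident to $u$, so the hypothesized internal edge $f$ is incident to $u$; by Proposition~\ref{pro:leafinternal1}(ib), $\lim_{t\to\infty}w_f<\infty$ forces $\kappa_f\to 0$. Writing $f=uv'$, the curvature $\kappa_f=\frac{2-d_u}{w_fD_u}+\frac{2-d_{v'}}{w_fD_{v'}}$ is a sum of two nonpositive terms, so each must tend to $0$. When $d_u=2$ the term $s(t)$ vanishes identically; when $d_u\geq 3$, since $w_f$ is bounded (it has a finite positive limit), $\frac{2-d_u}{w_fD_u}\to 0$ forces $D_u\to\infty$, hence $s(t)\to 0$. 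A variation-of-constants / Gronwall comparison then gives $w_e(t)=e^{-t}w_e(0)+\int_0^t e^{-(t-\sigma)}s(\sigma)\,d\sigma\to 0$, which is the desired conclusion.

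For (ii) the plan is to prove eventual monotonicity of $w_e$ by controlling the sign of $\kappa_e$. The case $d_u=2$ is immediate ($\kappa_e\equiv 1$, so $w_e\to 0$). For $d_u\geq 3$ I would first reduce to the leaf $e^\ast$ of \emph{maximal} initial weight at $u$: by Proposition~\ref{pro:leaf=leaf} any two leaves at $u$ differ by $(\,\cdot\,)e^{-t}\to 0$, so once $\lim_{t\to\infty}w_{e^\ast}$ is shown to exist (possibly infinite), every leaf at $u$ — including $e$ — inherits the same limit. To handle $e^\ast$, I would examine $\kappa_{e^\ast}'$ at any instant where $\kappa_{e^\ast}=0$. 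The derivative formula for $\kappa$ from Section~\ref{sec:pre} degenerates on a leaf (the degree-one endpoint contributes nothing), leaving $\kappa_{e^\ast}'=\frac{1}{D_u}\sum_{x\sim u}\frac{\kappa_{ux}}{w_{ux}}$ at such an instant. There $w_{e^\ast}D_u=d_u-2$, so every adjacent leaf satisfies $\kappa_g=1-\frac{w_{e^\ast}}{w_g}=\frac{w_g-w_{e^\ast}}{w_g}\leq 0$ by maximality of $e^\ast$ (order preservation, Proposition~\ref{pro:leaf=leaf}(i)), while internal edges contribute $\frac{\kappa_f}{w_f}\leq 0$. Hence $\kappa_{e^\ast}'\leq 0$ whenever $\kappa_{e^\ast}=0$. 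Applying Lemma~\ref{lem:halemma} to $f=\kappa_{e^\ast}$ on $[a,b]$ shows: if $\kappa_{e^\ast}$ is ever negative it stays $\leq 0$ afterward (so $w_{e^\ast}$ is eventually nondecreasing, limit possibly $+\infty$); otherwise $\kappa_{e^\ast}\geq 0$ for all time (so $w_{e^\ast}$ is nonincreasing and bounded below by $0$). In either case the limit exists.

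I expect the sign computation of $\kappa_{e^\ast}'$ at the zeros of $\kappa_{e^\ast}$ to be the main obstacle, and the reason for restricting to the maximal leaf. The exact identity $\kappa_g=(w_g-w_{e^\ast})/w_g$ is only available at the instants $\kappa_{e^\ast}=0$, and only maximality makes the adjacent-leaf contributions definitively nonpositive; for a non-maximal leaf these terms can have either sign, so the monotonicity argument would fail directly. Turning the pointwise sign condition into a global monotone conclusion is exactly what Lemma~\ref{lem:halemma} supplies, which is why the delicate step is arranging the hypotheses of that lemma rather than merely identifying a candidate limit for the curvature.
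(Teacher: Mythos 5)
Your proof is correct, but it takes a genuinely different route from the paper's. For part (i), the paper also invokes Proposition \ref{pro:leafinternal1}(ib) to get $\kappa_f\to 0$, but then argues that some edge $h$ with $w_f/w_h\to\infty$ must be a leaf at $u$ and concludes via the leaf-comparison results; you instead read everything off the scalar equation $w_e'=-w_e+(d_u-2)/D_u$, show the source term tends to zero (each nonpositive summand of $\kappa_f$ must vanish, and boundedness of $w_f$ forces $D_u\to\infty$), and finish by variation of constants as in Lemma \ref{lem:Gronwall}; this is more self-contained and avoids identifying which edge makes $D_u$ blow up. For part (ii), the paper splits into cases according to whether every internal edge at $u$ has infinite limit and whether $w_e$ is bounded, computes the limiting value of $\kappa_e$ in the bounded case (obtaining $\frac{1}{d_u-1}$, $0$, or a negative value), and rules out bounded-but-nonconvergent behavior by an oscillation argument on local minima; you instead prove a clean dichotomy for the maximal leaf $e^\ast$ --- either $\kappa_{e^\ast}\ge 0$ for all time, or $\kappa_{e^\ast}\le 0$ from some time on --- by computing the sign of $\kappa_{e^\ast}'$ at zeros of $\kappa_{e^\ast}$ (using the identity $\kappa_g=1-w_{e^\ast}/w_g$ at such instants, order preservation from Proposition \ref{pro:leaf=leaf}(i), and nonpositivity of internal curvatures) and applying Lemma \ref{lem:halemma}. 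That barrier computation is precisely the device the paper deploys later in Proposition \ref{pro:ke<0always<0}, so in effect you front-load that proposition and obtain existence of the limit in all cases at once, with the reduction to $e^\ast$ handled by the exponential decay of leaf differences. What each approach buys: yours is shorter and avoids the delicate oscillation/local-minima contradiction; the paper's case analysis produces by-products --- the limiting curvature values and the link between $d_u'$ and boundedness of $w_e$ --- that it reuses in Proposition \ref{Pro:k_etendto0} and Corollary \ref{cor:wefinite}, which your argument does not supply.
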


\begin{proof}
Firstly, consider  the case when $\lim\limits_{t\to \infty} w_f$ is finite for some internal edge $f$, which implies, according to part (ib) of Proposition \ref{pro:leafinternal1}, that $\lim\limits_{t\to \infty} \kappa_f=0$.  Based on previous results, it follows that leaf $e$ is an edge such that $\frac{w_f}{w_e}\to 0$, then $w_e$ approaches zero. The results of (i) and (ii) follow.

Next, consider the case when $\lim\limits_{t\to \infty} w_f=\infty$ for every internal edge $f\sim u$. 
There are two cases to consider. 
\begin{itemize}
    \item[Case 1:] 
If $w_e(t)$ is bounded on $(0, \infty)$, then
this implies that for any another leaf incident to $u$, the weight is bounded according to Proposition \ref{pro:leaf=leaf}. 
Note that in this case, $\lim\limits_{t\to \infty}\frac{w_e(t)}{w_f(t)}=0$  for every internal edge $f\sim e$, Thus, 
 \[
\lim_{t \to \infty} \sum_{\text{internal } f\sim u} \frac{w_e(t)}{w_f(t)} = 0.
\]
Then, we get,
$$\lim\limits_{t\to \infty}\kappa_e= 1-\frac{d_u-2}{d_u'},$$
which indicates that the sign of $w'_e$ depends on the value of $d_u'$ for large time $t$. 
Notice that if $d_u'\leq d_u-3$, $\kappa_e$ would tend to a value that is at most $\frac{-1}{d_u-3}$, suggesting that $w_e(t)$ would tend to infinity. 
Therefore, either $d_u'=d_u-1$, implying $\kappa_e$ tends to $\frac{1}{d_u-1}$, then $w_e(t)$ would tend to zero; or 
$d_u'=d_u-2$, resulting in $\lim\limits_{t\to \infty}\kappa_e=0$. 

We then consider this case $\lim\limits_{t\to \infty}\kappa_e=0$. 
By  (i) in Proposition \ref{pro:leaf=leaf},  let leaf $e$ attain the maximum initial weight among leaves sharing $u$, for any $t$,
then we have 
\begin{align*}
 \kappa_e(t)
     & > 1 - \frac{d_u - 2}{d_u - 2}> 0.
\end{align*}
Thus,  the derivative $w'_e=-\kappa_ew_e$ is negative and then $w_e(t)$ is monotonically decreasing and, being limited below by zero, it converges to a bounded value.

As a result, $\lim\limits_{t\to \infty}w_e$ exists. This holds for any other leaf $g$ where $g\sim e$, according to result (ii) in Proposition \ref{pro:leaf=leaf}.

\item[Case 2:] 
Next, we assume that $w_e(t)$ is unbounded on $(0, \infty)$.  Note that if such an edge $e$ exists, then for any other leaf incident to the vertex $u$, their unnormalized weights must also be unbounded according to Proposition \ref{pro:leaf=leaf}. We will show that $w_e(t)$ necessarily tends to infinity using the following claim:

\textbf{Claim}: \emph{If there exists a time sequence for which $w_e(t)$ remains bounded, then $d_u' = d_u - 2$.}

Suppose, for contradiction, that $\kappa_e(t)$ must oscillate, switching between positive and negative values infinitely often, which would require it to cross zero infinitely many times. Since the derivative of $w_e(t)$ is defined everywhere, consider the sequence of local minima of $w_e(t)$, denoted $\{w_e(t_i)\}_{i \geq 1}$. By assumption, $w_e(t_i)$ does not tend to infinity.

Thus, there exists a bounded subsequence $\{w_e(t_{i_m})\}$. Now consider any internal edge $f$ connected to $u$. Since $w_f(t)$ diverges to infinity, we obtain
\[
\lim_{m \to \infty} \frac{w_e(t_{i_m})}{w_f(t_{i_m})} = 0.
\]
At these local minimas, we also have
\[
\kappa_e(t_{i_m}) = 0.
\]
Since for leaf $g\sim e$, we have 
\[
\lim_{t \to \infty} \sum_{\text{leaf } g} \frac{w_e(t)}{w_g(t)} = d_u'.
\]
Using this and $\kappa_e(t_{i_m}) = 0$, we derive
\[
0 = \lim_{m \to \infty} \kappa_e(t_{i_m}) = 1 - \frac{d_u - 2}{d_u' + 0},
\]
which implies $d_u' = d_u - 2$.

Without loss of generality, assume $e$ is the leaf with the maximum initial weight among all leaves sharing $u$. Similarly, by Lemma \ref{lem:maxleaf},  we have
\[
\kappa_e(t) > 1 - \frac{d_u - 2}{d_u - 2} > 0.
\]
This implies that $w_e(t)$ is strictly decreasing and the situation in which it remains unbounded without approaching infinity is not possible. 

Consequently, in the final situation, $w_e(t)$ approaches infinity. This concludes the proof.
\end{itemize}
\end{proof}

The following proposition asserts that if $w_e\to \infty$, then $w_e$ must monotonically increase to infinity.

\begin{proposition}\label{pro:ke<0always<0}
Let the metric $w$ of T evolve under the Ricci flow \eqref{eq:unnor_continuous}. 
For a leaf $e$, if $w_e(t)$ tends to infinity,  then there exists a time $t_0$ such that $\kappa_e(t)< 0$ for all $t\geq t_0$. That is,  $w_e(t)$ is monotonically increasing for $t\geq t_0$. 
\end{proposition}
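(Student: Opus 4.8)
The plan is to prove the statement first for the leaf of largest initial weight at the common internal vertex, and then deduce it for $e$ via Lemma~\ref{lem:maxleaf}. Write $e=uv$ with $v$ the leaf and $u$ internal, and let $e^{\ast}$ be the leaf incident to $u$ of maximal initial weight; since $w_{e^{\ast}}(t)\ge w_e(t)$ for all $t$ by Proposition~\ref{pro:leaf=leaf}(i), the hypothesis $w_e\to\infty$ forces $w_{e^{\ast}}\to\infty$. Because $\kappa_{e^{\ast}}(t)\ge\kappa_g(t)$ for every leaf $g\sim u$ (Lemma~\ref{lem:maxleaf}), once I show $\kappa_{e^{\ast}}(t)<0$ for large $t$ the same follows for $e$, and $w_e'=-\kappa_e w_e$ then gives strict monotonicity. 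So it suffices to treat $e^{\ast}$.

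First I would record the structural constraints forced by $w_{e^{\ast}}\to\infty$. If $d_u=2$ then the leaf curvature is identically $1$ and $w_{e^{\ast}}\to 0$, a contradiction, so $d_u\ge 3$ and Lemma~\ref{lem:maxleaf} applies, giving $\kappa_{e^{\ast}}>1-\frac{d_u-2}{d_u'}$. Since $T$ is not a star, $u$ carries an internal edge, so $d_u'\le d_u-1$; were $d_u'\ge d_u-2$, the above lower bound would be $\ge 0$, whence $\kappa_{e^{\ast}}>0$ for all $t$, making $w_{e^{\ast}}$ strictly decreasing and contradicting $w_{e^{\ast}}\to\infty$. Hence $d_u'\le d_u-3$ and in particular $d_u\ge 4$. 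This is exactly what secures the strict sign later: for each internal edge $f=uz\sim u$ one has $\kappa_f=\frac{2-d_u}{w_fD_u}+\frac{2-d_z}{w_fD_z}<0$ strictly, because $2-d_u\le -2<0$ and $2-d_z\le 0$ (as $z$ is not a leaf), and $u$ carries at least one such edge.

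Next I set up a barrier argument for $\kappa_{e^{\ast}}$. Since $w_{e^{\ast}}\to\infty$ and $w_{e^{\ast}}'=-\kappa_{e^{\ast}}w_{e^{\ast}}$, the curvature cannot remain nonnegative for all large $t$ (otherwise $w_{e^{\ast}}$ would eventually be non-increasing, hence bounded), so I may pick $t_0$ with $\kappa_{e^{\ast}}(t_0)<0$. The main computation is to evaluate the derivative formula for $\kappa_{uv}$ at a zero of $\kappa_{e^{\ast}}$: the leaf endpoint $v$ contributes nothing, since its bracket $\frac{1}{D_v}\sum_{y\sim v}\frac{\kappa_{vy}}{w_{vy}}-\kappa_{e^{\ast}}$ vanishes ($v$ has $u$ as its only neighbor), and using $\frac{d_u-2}{w_{e^{\ast}}D_u}=1-\kappa_{e^{\ast}}$ one obtains, whenever $\kappa_{e^{\ast}}=0$,
\[
\kappa_{e^{\ast}}'=\frac{1}{D_u}\sum_{x\sim u}\frac{\kappa_{ux}}{w_{ux}}
=\frac{1}{D_u}\Big(\sum_{\text{leaf }g\sim u}\frac{\kappa_g}{w_g}+\sum_{\text{internal }f\sim u}\frac{\kappa_f}{w_f}\Big).
\]
At such a time $w_{e^{\ast}}D_u=d_u-2$, so $\kappa_g=1-\frac{w_{e^{\ast}}}{w_g}$ and hence $\frac{\kappa_g}{w_g}=\frac{w_g-w_{e^{\ast}}}{w_g^{2}}\le 0$ for every leaf $g$ (maximality of $e^{\ast}$), while each internal term is strictly negative by the previous paragraph; therefore $\kappa_{e^{\ast}}'<0$ at every zero of $\kappa_{e^{\ast}}$.

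Finally I would invoke Lemma~\ref{lem:halemma} with $f=\kappa_{e^{\ast}}$ on each interval $[t_0,b]$: from $\kappa_{e^{\ast}}(t_0)<0$ and $\kappa_{e^{\ast}}'\le 0$ at zeros it yields $\kappa_{e^{\ast}}\le 0$ on $[t_0,\infty)$. Strictness is then immediate, since any zero $t_1>t_0$ would be a local maximum of the nonpositive function $\kappa_{e^{\ast}}$, forcing $\kappa_{e^{\ast}}'(t_1)=0$ and contradicting $\kappa_{e^{\ast}}'(t_1)<0$; thus $\kappa_{e^{\ast}}(t)<0$ for all $t>t_0$, so $w_{e^{\ast}}$ is strictly increasing there, and the conclusion transfers to $e$ via $\kappa_e\le\kappa_{e^{\ast}}$. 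I expect the main obstacle to be the derivative-at-a-zero computation and its sign analysis, specifically pinning down $\frac{\kappa_g}{w_g}=(w_g-w_{e^{\ast}})/w_g^{2}\le 0$ and securing the \emph{strict} negativity of the internal terms, for which the structural reduction $d_u\ge 4$ is essential.
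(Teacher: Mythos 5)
Your proposal is correct and takes essentially the same route as the paper's own proof: reduce to the leaf of maximal initial weight via Lemma~\ref{lem:maxleaf}, show that $\kappa'$ is strictly negative at any zero of the curvature by evaluating the derivative formula there (leaf terms $\leq 0$ by maximality, internal terms $<0$ at finite times), conclude $\kappa\leq 0$ and then $\kappa<0$ via the barrier Lemma~\ref{lem:halemma}, and transfer to the other leaves through $\kappa_e\leq\kappa_{e^{\ast}}$. The only inessential difference is your preliminary structural reduction $d_u'\leq d_u-3$ (hence $d_u\geq 4$): the paper gets the same strictness more directly from $d_u\geq 3$ together with the standing assumption that $T$ is not a star, which already guarantees an internal edge at $u$ whose curvature is strictly negative at every finite time.
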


\begin{proof}
 Let $e\sim u$,  clearly $d_u\geq 3$. By the formula $\kappa_e=1-\frac{d_u-2}{w_eD_u}$,  derivative of $\kappa_e$ is
\begin{align}\label{equ:deriofke}
 \kappa_e^{\prime}(t)=(1-\frac{d_u-2}{w_e D_u})^{\prime}=\frac{d_u-2}{w_e^2 D_u^2}(D_u'w_e+D_uw_e')=\frac{d_u-2}{w_e D_u}(\frac{\sum\limits_{v\sim u}\frac{\kappa_{uv}}{w_{uv}}}{\sum\limits_{v\sim u}\frac{1}{w_{uv}}}-\kappa_e).   
\end{align}
where $v$ ranges over all neighbors of $u$.

Given that $w_e(t)$ tends to infinity, there exists a sufficiently large time $t_0 > 0$ where $w_e^{\prime}(t_0) > 0$. Assume $w_e(t)$ does not have a monotonic increase on the interval $[t_0, \infty)$, then there exists $t_1>t_0$ such that $w_e^{\prime}(t_1) < 0$. By the continuity, there exists a point $a$ in $(t_0, t_1)$ such that $w_e^{\prime}(a) = 0$. Given that $w_e(t)$ is always positive, we have that $\kappa_e(t_0) < 0$ and $\kappa_e(a) = 0$.

Next,   at point $a$, we can derive $w_e(a)=\frac{d_u-2}{D_u(a)}$, and it follows that 
\begin{align*}
   D_u(a) \kappa_e^{\prime}(a)&=\sum\limits_{v\sim u}\frac{\kappa_{uv}(a)}{w_{uv}(a)}\\
    &=\sum\limits_{\text{leaf } g}\frac{\kappa_{g}(a)}{w_{g}(a)} +\sum\limits_{\text{internal } f}\frac{\kappa_{f}(a)}{w_{f}(a)}.
\end{align*}
Notice that for any internal edge $uv$ and finite time $a$, we have $\kappa_{uv}(a)<0$.
If $d_u'=1$, there is only one leaf $e$,  apparently  $\kappa_e^{\prime}(a)<0$.  
Let $d_u'>1$. W.l.o.g., we assume that $e$ attains the maximum initial weight among all leaves sharing $u$. 
Then for any leaf edge $g\sim e$, $\kappa_g(a)\leq 0$ according to Lemma \ref{lem:maxleaf}.
Since $D_u(t)$ is always positive, we conclude that  $\kappa_e^{\prime}(a)<0$ according to the formula (\ref{equ:deriofke}).

Given $\kappa_e(t_0)<0$ and $\kappa_e^{\prime}(a)<0$ whenever $\kappa_e(a)=0$, Lemma \ref{lem:halemma} implies $\kappa_e(t)\leq 0$ for all $t>t_0$, which contradicts the assumption $w_e^{\prime}(t_1) < 0$.


Hence, for every leaf $e$, if its unnormalized weight grows to infinity, there exists some time $t_0$ such that $w_e^{\prime}(t)\geq  0$ for all $t>t_0$. The result follows.


\end{proof}

\section{Asymptotic behavior of curvature}\label{subsec: limitofcur}
Based on previous knowledge on  unnormalized weights, we examine the criteria under which curvature approaches zero, or stays strictly positive or negative. 

\begin{proposition}\label{Pro:k_etendto0}
  Let the metric $w$ of T evolve under the Ricci flow \eqref{eq:unnor_continuous}. Consider $e$ as a leaf edge connected to vertex $u$ where $d_u \geq 3$. Consequently, the following statements are valid.
  
   (i)~ $d_u'=d_u-1$ if and only if $\lim\limits_{t\to \infty}\kappa_e(t)=\frac{1}{d_u-1}$;

  (ii) $\lim\limits_{t\to\infty} \kappa_e(t)=0$ if and only if $d_u'=d_u-2$. 

\end{proposition}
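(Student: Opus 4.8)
The plan is to work directly with the leaf-curvature formula, which under $\gamma(x)=\tfrac1x$ reads
$$\kappa_e = 1 - \frac{d_u-2}{S_e(t)}, \qquad S_e(t) := \sum_{v\sim u}\frac{w_e(t)}{w_{uv}(t)} = \sum_{\text{leaf } g\sim u}\frac{w_e}{w_g} + \sum_{\text{internal } f\sim u}\frac{w_e}{w_f},$$
and to pin down the limit of $S_e$. The leaf part is robust: since $e$ has the largest initial weight among the leaves at $u$, Proposition~\ref{pro:leaf=leaf}(i) gives $w_e\ge w_g$ for all $t$, so each ratio is $\ge 1$, and Proposition~\ref{pro:leaf=leaf}(iii) gives $w_e/w_g\to 1$; hence $\sum_{\text{leaf}}w_e/w_g\ge d_u'$ for all $t$ and $\to d_u'$. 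Everything therefore reduces to controlling $\Sigma_f := \sum_{\text{internal } f}w_e/w_f$.

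First I would treat the two ``positive-curvature'' regimes $d_u'\in\{d_u-1,d_u-2\}$ at once. Here $\sum_{\text{leaf}}w_e/w_g\ge d_u'\ge d_u-2$, while $\Sigma_f>0$ at every finite time (there is at least one internal edge since $T$ is not a star), so $S_e>d_u-2$ and thus $\kappa_e>0$ for all $t$; consequently $w_e'=-\kappa_e w_e<0$, so $w_e$ is strictly decreasing and bounded. Boundedness forces each internal ratio to vanish: if $\lim w_f=\infty$ then $w_e/w_f\to0$ trivially, whereas if $\lim w_f<\infty$ then Proposition~\ref{pro:leavesdecrefaster}(i) gives $w_e\to0$ and again $w_e/w_f\to0$. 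Hence $\Sigma_f\to0$ and $S_e\to d_u'$. Feeding this back, when $d_u'=d_u-1$ we obtain $\kappa_e\to 1-\frac{d_u-2}{d_u-1}=\frac1{d_u-1}$, proving (i); when $d_u'=d_u-2$ we obtain $\kappa_e\to 1-\frac{d_u-2}{d_u-2}=0$, which is the forward implication of (ii).

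It remains to prove the converse of (ii): if $d_u'\le d_u-3$ (which forces $d_u\ge4$) then $\kappa_e\not\to0$. I would argue by contradiction, assuming $\kappa_e\to0$. Then $S_e\to d_u-2$, and since $\sum_{\text{leaf}}w_e/w_g\to d_u'$ we get $\Sigma_f\to (d_u-d_u')-2=d_u''-2\ge 1>0$. In particular not all internal ratios vanish, so (ruling out a finite-limit internal edge via Proposition~\ref{pro:leavesdecrefaster}(i), which would force $w_e\to0$ and all ratios to $0$) $w_e$ must be unbounded, hence $w_e\to\infty$ by Proposition~\ref{pro:leavesdecrefaster}(ii). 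Then Proposition~\ref{pro:ke<0always<0} gives $\kappa_e<0$ eventually, so in fact $\kappa_e\to0^-$, whence $w_e'/w_e=-\kappa_e\to0^+$, i.e. $w_e$ grows subexponentially. The decisive estimate comes from the computation inside Proposition~\ref{pro:leafinternal1}: for an internal edge $f=uv$ one has $(w_f-w_e)'=w_e+\frac{d_v-2}{D_v}\ge w_e$, so $w_f-w_e\ge \mathrm{const}+\int_{t_1}^t w_e\,ds$; dividing by $w_e(t)$ and applying L'H\^opital (valid since $w_e\to\infty$ and $w_e'=-\kappa_e w_e>0$ eventually) yields $\frac{\int_{t_1}^t w_e}{w_e(t)}=\lim\frac{w_e}{w_e'}=\lim\frac{1}{-\kappa_e}=+\infty$, so $w_f/w_e\to\infty$ and $w_e/w_f\to0$ for every internal $f$. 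Hence $\Sigma_f\to0$, contradicting $\Sigma_f\to d_u''-2\ge1$, and the converse follows.

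The main obstacle is precisely this last case $d_u'\le d_u-3$: both $w_e$ and the adjacent internal weights may diverge simultaneously, so a priori the ratios $w_e/w_f$ are only known to lie in $(0,1)$, and the crude bounds on $S_e$ leave $\kappa_e$ with a range straddling $0$. The resolution is that the hypothesis $\kappa_e\to0$ is self-defeating: it forces subexponential growth of $w_e$, which combined with the linear-in-$w_e$ growth of $w_f-w_e$ drives $w_f/w_e\to\infty$ and kills the internal ratios. A point to handle with care is that Proposition~\ref{pro:ke<0always<0} only yields $\kappa_e<0$ \emph{eventually} rather than bounded away from $0$, so the subexponential-growth/L'H\^opital device is genuinely needed, not a shortcut.
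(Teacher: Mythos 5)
Your proposal is correct, and it reaches the proposition by a genuinely different route than the paper, most visibly in the hard (``only if'') direction of (ii). For part (i) and the ``if'' direction of (ii), the paper argues via Barbalat's lemma: monotonicity of the ratios $w_e/w_f$ (numerator non-increasing, denominator non-decreasing) gives existence of their limits, Barbalat then yields $\frac{w_e}{w_f}(\kappa_f-\kappa_e)\to 0$, and a contradiction rules out a positive ratio limit. Your dichotomy is shorter and avoids Barbalat entirely: positivity of $\kappa_e$ (essentially Lemma \ref{lem:maxleaf}) makes $w_e$ bounded, and then each internal edge either diverges, killing the ratio trivially, or has a finite limit, in which case Proposition \ref{pro:leavesdecrefaster}(i) forces $w_e\to 0$ and kills the ratio again. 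For the ``only if'' direction, the paper applies Barbalat to $\sum_{\text{internal }f}w_e/w_f$ (after bounding its second derivative) to get $\sum_f \frac{w_e}{w_f}\kappa_f\to 0$, and then analyzes the structure of $\kappa_f$: it can only vanish along a subsequence if some ratio $w_f/w_{ux}$ blows up, that edge must be the leaf $e$, and hence $w_e/w_f\to 0$ along the same subsequence. Your replacement is a growth-rate contradiction: $\kappa_e\to 0$ together with $w_e\to\infty$ (correctly extracted from Propositions \ref{pro:leavesdecrefaster} and \ref{pro:ke<0always<0}) means $w_e$ grows subexponentially, while the identity $(w_f-w_e)'=w_e+\frac{d_v-2}{D_v}\ge w_e$ from the proof of Proposition \ref{pro:leafinternal1}, combined with L'H\^opital in the form $\lim\frac{\int_{t_1}^t w_e}{w_e(t)}=\lim\frac{1}{-\kappa_e}=\infty$, forces $w_f/w_e\to\infty$ for every internal $f$; so the internal ratios vanish, contradicting that they must sum to $d_u''-2\ge 1$. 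Both arguments are sound and rest only on results preceding the proposition, so there is no circularity. Yours is more elementary and exhibits the mechanism concretely (near-zero curvature cannot sustain the weight growth needed to keep internal ratios positive); the paper's Barbalat-plus-subsequence machinery is heavier but is a tool the authors reuse later (e.g., in Corollary \ref{cor:wefinite} and Lemma \ref{lem:caterconverge}), which is presumably why they phrase the proof that way.
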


\begin{proof}
WLOG, let $w_e(0) \geq w_g(0)$ for any leaf $g$ incident to $u$. 
When $d_u'=d_u-1$, according to Lemma \ref{lem:maxleaf}, there exists $t_0>0$ such that, 
$\kappa_e(t)\geq 1-\frac{d_u-2}{d_u-1}=\frac{1}{d_u-1}$  for $t\geq t_0$, which gives $$w_e(t) \leq  w_e(t_0)e^{-\frac{1}{d_u-1}t}. $$ Then $\lim\limits_{t\to \infty}w_e(t)=0$. It follows  $$\lim\limits_{t\to \infty}\kappa_e(t)= 1-\frac{d_u-2}{d_u-1+0}=\frac{1}{d_u-1}.$$ 

Conversely, assume that $\lim\limits_{t\to \infty}\kappa_e(t)=\frac{1}{d_u-1}, $ it is easy to see that  $\lim\limits_{t\to \infty}w_e(t)=0, $ therefore, $$\lim\limits_{t\to \infty}\frac{w_e}{w_f}(t)=0,  $$
for every internal edge $f\sim u$. 
Thus, $d'_u=d_u-1.$
We proved the result (i).

Assume $\lim\limits_{t\to \infty}\kappa_e(t)= 0$, then obviously we have 
$$\lim\limits_{t\to \infty}\sum\limits_{\text{internal } f\sim e}\frac{w_e}{w_f}=d_u-2-d_u'. $$
One can get the second derivative 
\begin{align*}
   \Big(\sum\limits_{\text{internal} ~f\sim e}\frac{w_e}{w_f}\Big)^{\prime \prime} &= \sum\limits_{\text{internal} f\sim e}\Big(\frac{w_e}{w_f}(\kappa_f-\kappa_e) \Big)^{\prime}
   \\&= \sum\limits_{\text{internal }f\sim e}\frac{w_e}{w_f}\Big((\kappa_f-\kappa_e )^2+ \kappa_f'-\kappa_e'\Big),
\end{align*}
which is bounded, then  by Barbalat's Lemma \ref{barlem2},  the first derivative satisfies $$\lim\limits_{t\to \infty}\sum\limits_{\text{internal } f\sim e}\frac{w_e}{w_f}(\kappa_f-\kappa_e)=0.  $$
Hence, 
$$\lim\limits_{t\to \infty}\sum\limits_{\text{internal } f\sim e}\frac{w_e}{w_f}\kappa_f =0.$$
As the curvature $\kappa_f$ remains strictly negative for each $f$, It follows that for any given internal edge $f$,
\[
\lim_{t \to \infty} \frac{w_e}{w_f} \kappa_f = 0.
\]
Moreover, as both $\frac{w_e}{w_f}$ and $\kappa_f$ are bounded, if $ \frac{w_e}{w_f} \not\to 0$
along some time sequence, then necessarily $\kappa_f$ must converge to zero along that same sequence.

Denote $f = uv$, and note that
\[
\kappa_f = - \frac{d_u - 2}{\sum\limits_{ux} \frac{w_f}{w_{ux}}} 
           - \frac{d_v - 2}{\sum\limits_{vy} \frac{w_f}{w_{vy}}}.
\]

Observe that if $\kappa_f$ approaches zero along a time sequence, then 
$\frac{w_f}{w_{ux}}$ must diverge to infinity for at least one incident edge $ux$.  
By Proposition~\ref{pro:leafinternal1} together with result of (iii) of Proposition~\ref{pro:leaf=leaf}, this edge can be identified with the leaf $e$, and hence 
$\frac{w_e}{w_f}$ must converge to zero along the same sequence.

Consequently,
\[
\lim_{t \to \infty} \frac{w_e}{w_f} = 0,
\] 
which implies
\[
\lim_{t \to \infty} \kappa_e = 1 - \frac{d_u - 2}{d'_u + 0} = 0,
\] 
and hence
\[
d_u' = d_u - 2.
\]

On the other hand, suppose that $d_u' = d_u - 2$.  
By Lemma \ref{lem:maxleaf},  we have $\kappa_e(t) >0$ for large time $t$, which implies that $w_e(t)$ is decreasing.  
Consequently, for every internal edge $f \sim e$, the ratio $\frac{w_e}{w_f}$ is decreasing since the denominator is non-decreasing.  
It follows that
\[
\lim_{t \to \infty} \frac{w_e}{w_f} \quad \text{exists}.
\]

By Barbalat's Lemma, we then obtain for the derivative
\[
\lim_{t \to \infty} \frac{w_e}{w_f} \, (\kappa_f - \kappa_e) = 0.
\]

For this identity to hold, if 
\[
\lim_{t \to \infty} \frac{w_e}{w_f} > 0,
\]
then necessarily
\[
\lim_{t \to \infty} (\kappa_f - \kappa_e)(t) = 0.
\]
Moreover,
\[
\lim_{t \to \infty} \frac{w_f}{w_e} 
= \frac{1}{\lim_{t \to \infty} \frac{w_e}{w_f}} < \infty,
\]
which implies that $\frac{2-d_u}{w_f D_u}$ does not vanish along any time sequence, and hence neither does $\kappa_f$.  
On the other hand, since $\kappa_e$ remains positive for sufficiently large $t$, this contradicts the fact that $(\kappa_f - \kappa_e)$ tends to zero.  

Therefore, $\lim\limits_{t \to \infty} \frac{w_e}{w_f}=0$ for every internal edge, it follows that
\[
\lim_{t \to \infty} \kappa_e 
= 1 - \frac{d_u - 2}{d_u - 2} = 0.
\]

\end{proof}

\begin{corollary}\label{cor:wefinite}
 Let the metric $w$ of tree T evolve under the Ricci flow \eqref{eq:unnor_continuous}. 
Let $e=uv$ be a leaf with $d_u\geq 2$. Then the unnormalized weight $w_e(t)$ is bounded if and only if 
$d'_u =d_u - 2$ or $d'_u =d_u - 1$.

\end{corollary}
\begin{proof}
By Proposition~\ref{pro:leavesdecrefaster}, the $\lim_{t \to \infty} w_e(t)$ exists.  
Suppose that $\lim_{t \to \infty} w_e(t) < \infty$.  
By Barbalat's Lemma, the derivative 
\[
w'_e(t) = -\kappa_e(t)\, w_e(t)
\] 
must then converge to zero.  
Thus, if  $\kappa_e(t)$ does not approach zero along at least one time sequence, then $w_e(t) \to 0$ along this sequence.  
Since the limit of $w_e(t)$ exists, we conclude that in fact
\[
w_e(t) \to 0 \quad \text{as } t \to \infty.
\]
Then, for each internal edge $f \sim e$, the ratio $\frac{w_e}{w_f}$ tends to zero as $t \to \infty$. Assume $d'_u \leq d_u - 3$, then we have 
\[
\kappa_e \to 1 - \frac{d_u - 2}{d'_u+0} \leq -\frac{1}{d_u - 3}.
\]
In particular, the derivative $w'_e(t)$ remains positive for sufficiently large $t$, contradicting the fact that $w_e(t) \to 0$.

Thus, $d'_u$ is at least $d_u-2$ and $\kappa_e(t)$ approaches either $\frac{1}{d_u-1}$ or zero. 

We then prove the other direction. 
The case $d'_u= d_u-1$ is clear and $\lim\limits_{t\to \infty}w_e(t)=0$. 
Let $d'_u= d_u-2$, we have $\kappa_e\to 0$.    W.l.o.g. let $e$ be the leaf with the largest initial weight among all leaves sharing vertex $u$. According to Lemma \ref{lem:maxleaf}, we have for all $t>0$, $$\kappa_e(t)> 1-\frac{d_u-2}{d_u'}> 1-\frac{d_u-2}{d_u-2}>0.$$ Thus, the derivative $w'_e(t)$ is negative,  the unnormalized weight $w_e(t)$ decreases. Then $\lim\limits_{t\to \infty}w_e(t)$ is finite. 


\end{proof}

In the following, we continue to establish a bound for $\kappa_e$ in other cases.
\begin{proposition}\label{pro:kenegative}
Let the metric $w$ of T evolve under Ricci flow \eqref{eq:unnor_continuous}. 
For a leaf $e$ incident to a vertex $u$,  $d_u'\leq d_u-3$, 
then $\kappa_e$ is strictly negative for large time $t$ and 
\begin{align}\label{equ:kenegative}
\kappa_e\leq 1-\frac{d_u-2}{d_u'+\frac{(d''_u-2)d''_u}{(d''_u-2+d_u)}}. 
\end{align}

That is, 
$$\kappa_e\leq -\frac{d'_u}{d_u}. $$
\end{proposition}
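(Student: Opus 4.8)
The plan is to collapse the many internal-edge ratios at $u$ into a single scalar quantity and then run a one-dimensional comparison argument. Throughout set $S:=w_eD_u=\sum_{\mathrm{leaf}\,g\sim u}\frac{w_e}{w_g}+\sum_{\mathrm{internal}\,f\sim u}\frac{w_e}{w_f}$, so that $\kappa_e=1-\frac{d_u-2}{S}$, and write $\rho_f:=\frac{w_e}{w_f}$ and $I:=\sum_{\mathrm{internal}\,f\sim u}\rho_f$. Note the hypothesis $d_u'\le d_u-3$ is just $d_u''\ge 3$. First I would dispose of the qualitative claim: a bounded leaf weight forces $d_u'\ge d_u-2$ (Corollary~\ref{cor:wefinite}), so here $w_e$ is unbounded and hence $w_e\to\infty$ by Proposition~\ref{pro:leavesdecrefaster}, whence Proposition~\ref{pro:ke<0always<0} already gives $\kappa_e<0$ for $t\gg1$. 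I also record the two inputs for the quantitative bound: Proposition~\ref{pro:leaf=leaf}(iii) gives $w_e/w_g\to1$ for every leaf $g\sim u$, so $S=d_u'+I+o(1)$; and Proposition~\ref{pro:leafinternal1}(ii) gives $w_e<w_f$ for large $t$, so $\rho_f\in(0,1)$ and $0<I<d_u''$ eventually.

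Next I would derive a per-edge differential inequality. Writing the flow as $w_e'=-w_e+\frac{d_u-2}{D_u}$ and, for an internal edge $f=uv$, $w_f'=\frac{d_u-2}{D_u}+\frac{d_v-2}{D_v}\ge\frac{d_u-2}{D_u}$ (the discarded term is nonnegative because $d_v\ge2$), and using $D_u=S/w_e$ to rewrite $\frac{d_u-2}{w_eD_u}=\frac{d_u-2}{S}$ and $\frac{d_u-2}{w_fD_u}=\frac{(d_u-2)\rho_f}{S}$, I obtain
\[
\frac{\rho_f'}{\rho_f}=\kappa_f-\kappa_e\le -1+\frac{d_u-2}{S}\,(1-\rho_f).
\]

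The key step is then to aggregate. Summing over the $d_u''$ internal edges and applying the Cauchy--Schwarz bound $\sum_f\rho_f^2\ge I^2/d_u''$ (legitimate since $I<d_u''$, so $I-I^2/d_u''\ge0$) yields the single inequality
\[
I'\le -I+\frac{d_u-2}{S}\Big(I-\frac{I^2}{d_u''}\Big).
\]
Substituting $S=d_u'+I+o(1)$ and fixing $\varepsilon>0$, for large $t$ this becomes $I'\le\Phi_\varepsilon(I)$ with $\Phi_\varepsilon(I)=I\big[-1+\frac{(d_u-2)(1-I/d_u'')}{d_u'-\varepsilon+I}\big]$. The bracket is strictly decreasing on $[0,d_u'']$, is positive at $I=0$ (since $d_u-2=d_u'+d_u''-2>d_u'-\varepsilon$) and equals $-1$ at $I=d_u''$, so $\Phi_\varepsilon$ has a unique positive zero $I^\ast(\varepsilon)$ with $\Phi_\varepsilon<0$ beyond it. A standard scalar comparison then forces $\limsup_t I\le I^\ast(\varepsilon)$, and letting $\varepsilon\to0$ gives $\limsup_t I\le I^\ast$, where $I^\ast$ solves $(d_u-2)(1-I/d_u'')=d_u'+I$, that is $I^\ast=\frac{(d_u''-2)d_u''}{d_u+d_u''-2}$.

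Finally I would convert this back to curvature: $\limsup_t S\le d_u'+I^\ast=\frac{d_u(d_u-2)}{d_u+d_u''-2}$, and since $\kappa_e=1-\frac{d_u-2}{S}$ is increasing in $S$, this gives $\limsup_t\kappa_e\le\frac{2-d_u''}{d_u}$. Because $d_u'+\frac{(d_u''-2)d_u''}{d_u''-2+d_u}=\frac{d_u(d_u-2)}{d_u+d_u''-2}$, this is exactly the bound \eqref{equ:kenegative}, whose right-hand side simplifies to $\frac{2-d_u''}{d_u}$; in particular $\kappa_e<0$ for large $t$, recovering the qualitative statement as well. The hard part is the passage to the limit: $I$ and $S$ need not converge and the internal ratios are all coupled through the common denominator $S$, so a naive edge-by-edge Barbalat argument stalls. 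The device that unlocks it is the Cauchy--Schwarz aggregation, which turns the coupled system into one autonomous-in-$I$ differential inequality amenable to a one-dimensional comparison; the remaining subtlety is absorbing the leaf contribution $\sum_g w_e/w_g\to d_u'$ uniformly, which is handled by comparing against the perturbed $\Phi_\varepsilon$ and sending $\varepsilon\to0$.
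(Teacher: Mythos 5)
Your proof is correct, and although it reuses the paper's two computational ingredients --- the one-sided bound $\kappa_f\le\frac{2-d_u}{w_fD_u}$ (discarding the far-endpoint term of an internal edge) and the Cauchy--Schwarz estimate $\sum_f\rho_f^2\ge I^2/d_u''$ --- it wraps them in a genuinely different dynamical argument, so it is worth comparing the two. The paper fixes the leaf of maximal initial weight, differentiates $\kappa_e$ itself (equivalently $S=w_eD_u$, since $\kappa_e'=(d_u-2)S'/S^2$), proves the implication ``$I>I^\ast:=\frac{(d_u''-2)d_u''}{d_u''-2+d_u}$ forces $\kappa_e'<0$'' by handling the leaf terms through $\kappa_g\le\kappa_e$, and then traps $\kappa_e$ by a barrier principle in the spirit of Lemma~\ref{lem:halemma}. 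You instead differentiate only the internal aggregate $I=\sum_f\rho_f$, obtain the autonomous scalar inequality $I'\le\Phi_\varepsilon(I)$, and close with a one-dimensional comparison followed by $\varepsilon\to0$. Your packaging buys two things. First, it avoids the max-leaf normalization altogether (the leaves enter only through $\sum_g w_e/w_g\to d_u'$). Second, and more substantively, it repairs a gap in the paper's reduction: the paper's opening claim that \eqref{equ:kenegative} follows once ``$I>I^\ast\Rightarrow\kappa_e'<0$'' is established tacitly identifies $\sum_g w_e/w_g$ with $d_u'$; since in fact one only has $\sum_g w_e/w_g\ge d_u'$ with asymptotic equality, the set where $\kappa_e$ exceeds $1-\frac{d_u-2}{d_u'+I^\ast}$ does not coincide with the set where $I>I^\ast$, and the barrier argument as stated does not close. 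Your perturbed threshold $I^\ast(\varepsilon)$ absorbs exactly this $o(1)$ discrepancy. The price is that your conclusion is $\limsup_{t\to\infty}\kappa_e\le 1-\frac{d_u-2}{d_u'+I^\ast}$ rather than a pointwise inequality at every large time; but the same $o(1)$ infects the paper's argument, so the limsup form is what is honestly available, and strict negativity of $\kappa_e$ for $t\gg1$ still follows (as you note, it also follows independently from Corollary~\ref{cor:wefinite} and Proposition~\ref{pro:ke<0always<0}, which is the same qualitative route the paper takes).

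One further point in your favor: your simplification of the right-hand side of \eqref{equ:kenegative} to $\frac{2-d_u''}{d_u}$ is correct, and it is $\le-\frac{1}{d_u}<0$ because $d_u''\ge3$. The paper's additional assertion that this quantity equals $-\frac{d_u'}{d_u}$ is an algebra slip (the two expressions agree only when $d_u''=d_u'+2$), so the proposition's final display should be read as $\kappa_e\le\frac{2-d_u''}{d_u}$ --- which is precisely what you prove.
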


\begin{proof}
Let  $d'_u\leq  d_u-3$, we have $\lim\limits_{t\to \infty}w_e=\infty$ by Corollary \ref{cor:wefinite}. Then there exists a time $t_0$, such that $\kappa_e(t_0)<0$.

To demonstrate the inequality (\ref{equ:kenegative}), it is sufficient to show that 
$$\sum\limits_{\textbf{internal }f\sim u}\frac{w_e}{w_f}(t)> \frac{(d''_u-2)d''_u}{(d''_u-2+d_u)} \ \text{implies that ~}\kappa_e^{\prime}(t)< 0.  $$ 

W.l.o.g., let $e$ attain the maximum initial weight among all leaves sharing $u$. 
Consider the derivative expression
\begin{align*}
\kappa_e^{\prime}(t) = (1-\kappa_e)\left(\frac{\sum\limits_{v \sim u} \frac{\kappa_{uv}}{w_{uv}}}{\sum\limits_{v \sim u} \frac{1}{w_{uv}}} - \kappa_e \right). 
\end{align*}
To show $\kappa'_e\leq 0$, it is equivalent to prove that 
\[\sum\limits_{v \sim u} \frac{\kappa_{uv}}{w_{uv}} \leq  \sum\limits_{v \sim u} \frac{\kappa_e}{w_{uv}},\]
which is equivalent to
\[\sum\limits_{\textbf{leaf } g} \frac{\kappa_{g}}{w_{g}} + \sum\limits_{\textbf{internal } f} \frac{\kappa_{f}}{w_{f}}\leq \sum\limits_{\textbf{leaf } g} \frac{\kappa_{e}}{w_{g}} + \sum\limits_{\textbf{internal } f} \frac{\kappa_{e}}{w_{f}}.\]
Given that $w_g\leq w_e$ for each leaf $g$,  we have $\kappa_g(t) \leq \kappa_e(t)<0$ and 
$\frac{\kappa_e}{w_g}\leq \frac{\kappa_e}{w_g}\leq \frac{\kappa_e}{w_e}.$ Then
it suffices to show:
\[\sum\limits_{\textbf{internal } f} \frac{\kappa_{f}}{w_{f}} \leq \sum\limits_{\textbf{internal } f} \frac{\kappa_{e}}{w_{f}}.\]
Since for every internal $f=uv$, $\kappa_f \leq \frac{2-d_u}{w_fD_u}$, where the equality holds if and only if $d_v=2$.

It suffices to prove
\[\frac{2-d_u}{D_u}\sum\limits_{\textbf{internal } f} \frac{1}{w^2_{f}}\leq \left(1 + \frac{2-d_u}{w_eD_u}\right) \sum\limits_{\textbf{internal } f} \frac{1}{w_{f}},\]
which is equivalent to:
\[\frac{2-d_u}{D_u}\sum\limits_{\textbf{internal } f} \frac{1}{w^2_{f}} - \frac{2-d_u}{w_eD_u} \sum\limits_{\textbf{internal } f} \frac{1}{w_{f}} \leq \sum\limits_{\textbf{internal } f} \frac{1}{w_{f}}.\]
Using the Cauchy-Schwartz inequality, we have \[\sum\limits_{\textbf{internal }f}\frac{1}{w^2_{f}} \geq \frac{1}{d''_u}\left(\sum\limits_{\textbf{internal }f}\frac{1}{w_{f}}\right)^2, \] where equality holds if and only if for all internal edge $f\sim u$, $w_f(t)$ are the same.

Then, it is necessary to show
\[
\frac{2-d_u}{D_u}\frac{1}{d''_u}\left(\sum\limits_{\textbf{internal }f}\frac{1}{w_{f}}\right)^2 - \frac{2-d_u}{w_eD_u}\sum\limits_{\textbf{internal }f}\frac{1}{w_{f}}\leq  \sum\limits_{\textbf{internal }f}\frac{1}{w_{f}}. \]
By dividing both sides by $\sum\limits_{\textbf{internal }f}\frac{1}{w_{f}}$, we derive
\[
\frac{2-d_u}{D_u}\frac{1}{d''_u}\sum\limits_{\textbf{internal }f}\frac{1}{w_{f}}- \frac{2-d_u}{w_eD_u} \leq 1. \]
Multiplying both sides by $D_u$, we have 
\[
\frac{2-d_u}{d''_u}\sum\limits_{\textbf{internal }f}\frac{1}{w_{f}}- \frac{2-d_u}{w_e} \leq D_u. \]
In other words,
\[
\frac{2-d_u}{d''_u}\sum\limits_{\textbf{internal }f}\frac{1}{w_{f}}- \frac{2-d_u}{w_e} \leq \sum\limits_{\textbf{internal }f}\frac{1}{w_{f}}+\sum\limits_{\textbf{leaf }g}\frac{1}{w_{g}}.\]
Since $\frac{1}{w_{e}}\leq \frac{1}{w_{g}}$ for every leaf $g$,  
the above inequality is equivalent to 

\[
\frac{2-d_u}{d''_u}\sum\limits_{\textbf{internal }f}\frac{1}{w_{f}}- \frac{2-d_u}{w_e} \leq \sum\limits_{\textbf{internal }f}\frac{1}{w_{f}}+\frac{d'_u}{w_{e}}.\]

That is,  
\[
(\frac{2-d_u}{d''_u}-1)\sum\limits_{\textbf{internal }f}\frac{1}{w_{f}}\leq \frac{2-d_u+d'_u}{w_e} .\]
Multiplying by $-w_e$ on both sides, 
we get 
\[
(1-\frac{2-d_u}{d''_u})\sum\limits_{\textbf{internal }f}\frac{w_e}{w_{f}}\geq d''_u-2.\]
Multiplying $d''_u$ by both sides, we get 
\[
(d''_u-2+d_u)\sum\limits_{\textbf{internal }f}\frac{w_e}{w_{f}}\geq (d''_u-2)d''_u.\]

Using $d_u=d'_u+d''_u$, we have 
$$1-\frac{d_u-2}{d_u'+\frac{(d''_u-2)d''_u}{(d''_u-2+d_u)}}=-\frac{d'_u}{d_u}.$$

The proof is complete.

\end{proof}

For the internal edge, we have the following fact. 
\begin{corollary}\label{pro:kf=0}
   Let the metric $w$ of tree T evolve under the Ricci flow \eqref{eq:unnor_continuous}. 
Let $f=uv$ be an internal edge.  If $d_u=2$ , or $d'_u\geq d_u-2$ and $d_v=2$,  or $d'_v\geq d_v-2$, then $\lim\limits_{t\to \infty}\kappa_f(t)=0$. 
\end{corollary}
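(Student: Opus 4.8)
The plan is to split the curvature of the internal edge $f=uv$ into its two one-sided contributions and to show that each contribution tends to $0$ under the hypothesis imposed on the corresponding endpoint. Recall that with $\gamma(x)=\tfrac1x$ one has
\[
\kappa_{f}=\kappa_{uv}=\underbrace{\frac{2-d_u}{w_{uv}D_u}}_{=:\,\kappa_{u\to v}}+\underbrace{\frac{2-d_v}{w_{uv}D_v}}_{=:\,\kappa_{v\to u}},\qquad D_u=\sum_{x\sim u}\frac{1}{w_{ux}}.
\]
Since the hypothesis is symmetric in $u$ and $v$, it suffices to prove that $\kappa_{u\to v}(t)\to 0$ whenever $u$ satisfies ``$d_u=2$ or $d_u'\ge d_u-2$''; applying the same argument at $v$ then gives $\kappa_{v\to u}(t)\to 0$, and adding the two one-sided parts yields $\kappa_f(t)\to 0$.

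If $d_u=2$, then $2-d_u=0$ and $\kappa_{u\to v}\equiv 0$, so there is nothing to prove. Assume now $d_u\ge 3$ and $d_u'\ge d_u-2$; in particular $u$ carries at least one leaf, so I may choose the leaf $e\sim u$ of largest initial weight. The key observation is the elementary identity
\[
\kappa_{u\to v}=\frac{2-d_u}{w_{uv}D_u}=\frac{2-d_u}{w_{e}D_u}\cdot\frac{w_e}{w_{uv}}=(\kappa_e-1)\,\frac{w_e}{w_{uv}},
\]
which follows directly from $\kappa_e=1+\frac{2-d_u}{w_eD_u}$. Thus it is enough to show that the factor $\kappa_e-1$ stays bounded while the factor $\frac{w_e}{w_{uv}}$ tends to $0$.

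Boundedness of $\kappa_e-1$ is immediate, since all curvatures are bounded; indeed Proposition~\ref{Pro:k_etendto0} shows $\kappa_e$ has a finite limit, either $\tfrac{1}{d_u-1}$ or $0$. For the second factor I distinguish the two admissible subcases. If $d_u'=d_u-1$, then $uv$ is the only internal edge at $u$, and Proposition~\ref{Pro:k_etendto0}(i) gives $w_e\to 0$; since $w_{uv}$ is non-decreasing with $w_{uv}(t)\ge w_{uv}(0)>0$ by Proposition~\ref{pro:leafinternal1}, we get $\frac{w_e}{w_{uv}}\to 0$. If $d_u'=d_u-2$, I invoke directly the ratio estimate established in the proof of Proposition~\ref{Pro:k_etendto0}(ii), namely $\lim_{t\to\infty}\frac{w_e}{w_f}=0$ for every internal edge $f\sim e$, which applies to $f=uv$ in particular. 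In both subcases $\frac{w_e}{w_{uv}}\to 0$, hence $\kappa_{u\to v}\to 0$, and the proof finishes by the symmetric argument at $v$.

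The main delicacy — and the reason one cannot simply argue from the sizes of the individual weights — is that $w_{uv}$ may itself converge to a finite positive limit, so that $\frac{w_e}{w_{uv}}\to 0$ does \emph{not} follow merely from boundedness of $w_e$. The technical heart is therefore the ratio limit $\frac{w_e}{w_f}\to 0$, which is exactly the content extracted from Proposition~\ref{Pro:k_etendto0}; this is where the combinatorial distinction between the leaf-counts $d_u'=d_u-1$, $d_u'=d_u-2$, and $d_u'\le d_u-3$ enters, the present corollary being precisely the benign range $d_u'\ge d_u-2$ in which this ratio vanishes.
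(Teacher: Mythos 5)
Your proof is correct and takes essentially the same route as the paper: both decompose $\kappa_f$ into the one-sided contributions $\kappa_{u\to v}$ and $\kappa_{v\to u}$ and show each vanishes because the ratio $w_e/w_f$ of a maximal-initial-weight leaf to the internal edge tends to zero, a fact extracted from Proposition~\ref{Pro:k_etendto0} and its proof. Your treatment is merely more explicit than the paper's two-sentence argument, in particular in spelling out the subcase $d_u'=d_u-1$ (ratio from $w_e\to 0$ plus monotonicity of $w_{uv}$) and the trivial case $d_u=2$.
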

\begin{proof}
    According to the proof of Proposition \ref{Pro:k_etendto0}, if  $d'_u\geq d_u-2$, as well as $d'_v\geq d_v-2$, then there exist leaves attached to $u$ and $v$ such that the ratio of their unnormalized weights to the unnormalized weight of the edge $f$ approaches zero. 
Consequently, $\kappa_f$ tends to zero as $t$ approaches infinity as both   $\kappa_{u\to f}\to 0$ and $\kappa_{v\to f}\to 0$ hold.
\end{proof}

\textbf{Proof of the Main Theorem \ref{thm:foundproperties}}

\begin{proof}
The existence of $\lim\limits_{t\to \infty}w_{uv}(t)$ in the main Theorem \ref{thm:foundproperties} is proved by combining the results from  propositions \ref{pro:leafinternal1} and \ref{pro:leavesdecrefaster}.

\end{proof}

\section{The limit metric with constant curvature zero}\label{subsec:zeorcur}

In this section,  we characterize the structure of trees for which the Ricci flow converges to a metric with constant curvature zero, as stated in Theorem \ref{thm:limexist0}.
For the structure of such a tree, refer to Figure \ref{fig:hh}.

\begin{figure}[!h]
\begin{center}
\begin{tikzpicture}[
  every node/.style={circle, draw, fill=white, minimum size=0.3cm},
  level distance=1.5cm,
  leaf/.style={circle, draw, fill=black, minimum size=0.3cm}
]

\node (v1) at (0,0) {};
\node (v2) at (2,0) {};
\node (v3) at (4,0) {};
\node (v4) at (6,0) {};
\node (v5) at (8,0) {};
\node (v6) at (10,0) {};

\draw (v1) -- (v2);
\draw (v2) -- (v3);
\draw (v3) -- (v4);
\draw (v4) -- (v5);
\draw (v5) -- (v6);
\foreach \i in {1,2,3} {
    \node[leaf] (l1\i) at (-1,\i) {};
    \draw (v1) -- (l1\i);
}

\foreach \i in {1,2} {
    \node[leaf] (l2\i) at (1,\i) {};
    \draw (v2) -- (l2\i);
}

\foreach \i in {1,2, 3} {
    \node[leaf] (l3\i) at (3,\i) {};
    \draw (v3) -- (l3\i);
     \draw (v3) -- (l3\i);
}

\foreach \i in {1,2} {
    \node[leaf] (l4\i) at (5,\i) {};
    \draw (v4) -- (l4\i);
}

\foreach \i in {1,2,3} {
    \node[leaf] (l6\i) at (9,\i) {};
    \draw (v6) -- (l6\i);
}

\end{tikzpicture}
\caption{Caterpillar tree derived from a path by adding pending vertices}\label{fig:hh} 
\end{center}
\end{figure}
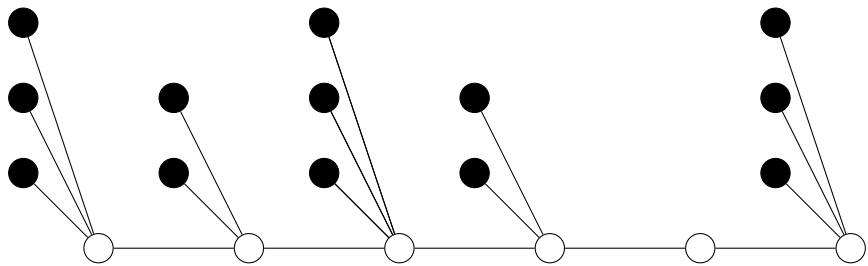

\begin{definition}[Caterpillar Trees]\label{caterpi}
A \emph{caterpillar tree} is a tree $T=(V,E)$ that contains a path 
$P = (v_1,v_2,\dots,v_{\ell})$, called the \emph{spine}, such that 
\[
V = V(P) \cup L,
\]
where $L$ is the set of leaves of $T$. More precisely:
\begin{itemize}
    \item If $v_i$ is an internal vertex of $P$ (i.e.\ $2 \leq i \leq l-1$), then $v_i$ is adjacent to exactly $d_{v_i}-2$ leaves in $L$.
    \item If $v_i$ is an endpoint of $P$ (i.e.\ $i=1$ or $i=l$), then $v_i$ is adjacent to exactly $d_{v_i}-1$ leaves in $L$.
\end{itemize}
\end{definition}

\begin{remark}
A path is a special case of a caterpillar tree, where the spine coincides with the entire tree. 
In this case, every internal edge lies on the spine and has curvature identically zero throughout the flow. 
Hence, under the Ricci flow, the metric naturally converges to a flat metric of zero constant curvature. 
\end{remark}

For any other caterpillar trees, we can prove the same result:

\begin{lemma}\label{lem:caterconverge}
Let $T$ be a caterpillar tree (not path) and let $w_e(t)$ evolve under the Ricci flow. Then
\begin{enumerate}
    \item Leaf curvatures converge: 
    \[
    \lim_{t\to\infty} \kappa_e(t) = 
    \begin{cases} 
    \frac{1}{d_{v_1}-1}, & e \text{ incident to } v_1\\
    \frac{1}{d_{v_l}-1}, & e \text{ incident to } v_l\\
    0, & \text{otherwise}.
    \end{cases}
    \]
    Internal edge curvatures converge to zero:
    \[
    \lim_{t\to\infty} \kappa_h(t) = 0.
    \]

    \item For any leaf $e$, 
    the normalized weight satisfies \[\lim_{t\to\infty}w_e(t)=0.\]

    \item 
    The product of unnormalized weight over all internal edges and leaves which are incident to $v_i$ for $i\neq 1, \ell$, is bounded. 

\end{enumerate}
\end{lemma}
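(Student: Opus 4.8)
The plan is to treat the three parts in order, reducing parts (1) and (2) to the curvature and weight asymptotics already established, and reserving the real work for the product estimate in part (3). Throughout I use the degree data of a caterpillar $T$ (which is neither a path nor a star): writing $v_1,\dots,v_\ell$ for the spine, the endpoints satisfy $d'_{v_1}=d_{v_1}-1$ and $d'_{v_\ell}=d_{v_\ell}-1$, every interior spine vertex satisfies $d'_{v_i}=d_{v_i}-2$, and every internal edge is a spine edge $v_iv_{i+1}$ whose two endpoints both satisfy $d'\ge d-2$. Since $T$ is not a star it has at least one internal edge $f$, whose unnormalized weight is nondecreasing (its curvature is nonpositive), so the total unnormalized weight is bounded below by $\tilde w_f(0)>0$.

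For part (1) the leaf-curvature limits are immediate from Proposition~\ref{Pro:k_etendto0}: part (i) gives $\kappa_e\to\frac{1}{d_{v_1}-1}$ for a leaf at $v_1$ (symmetrically at $v_\ell$), and part (ii) gives $\kappa_e\to 0$ for a leaf at an interior spine vertex. As those statements are phrased for the leaf of maximal initial weight, I extend them to the remaining leaves at a common vertex by Proposition~\ref{pro:leaf=leaf}(iii): the ratio to the maximal leaf tends to $1$, so the curvatures share the same limit. The vanishing of $\kappa_h$ on each internal edge then follows from Corollary~\ref{pro:kf=0}, applicable because $d'\ge d-2$ at both endpoints of every internal edge.

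For part (2) I split the leaves. A leaf $e$ at an endpoint has $\kappa_e\ge\frac{1}{d_{v_1}-1}$ for all $t$ by Lemma~\ref{lem:maxleaf} (for the maximal leaf, extended to the others as above), so its unnormalized weight decays exponentially to $0$; dividing by a total that stays bounded below gives normalized weight $\to 0$. For a leaf $e$ at an interior spine vertex (so $d'_{v_i}=d_{v_i}-2$), the proof of Proposition~\ref{Pro:k_etendto0}(ii) yields $\frac{w_e}{w_f}\to 0$ for every internal edge $f\sim v_i$, and I argue by dichotomy: if the unnormalized $w_e\to 0$ the conclusion follows exactly as before; otherwise $w_e$ tends to a positive limit, and then $\frac{w_e}{w_f}\to0$ forces $w_f\to\infty$, so the total weight diverges and the normalized $w_e$ again tends to $0$.

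Part (3) is the technical core, and I would base it on the exact identity~\eqref{producofweight}, $\prod_{h\in E}\tilde w_h(t)=C_0\,e^{-2t}$ with $C_0>0$. Writing $E_1$ for the set of endpoint leaves, it then suffices to show $\prod_{e\in E_1}\tilde w_e$ decays precisely like $e^{-2t}$, since dividing leaves the product over internal edges and interior-spine leaves bounded. The plan is to compute, for the $d_{v_1}-1$ leaves at $v_1$, the logarithmic derivative $\big(\log\!\prod_e \tilde w_e\big)'=-\sum_e\kappa_e$, and to show by manipulating $\kappa_e=1-\frac{d_{v_1}-2}{w_eD_{v_1}}$ together with $D_{v_1}=\sum_e\frac{1}{w_e}+\frac{1}{w_f}$ ($f$ the internal edge at $v_1$) that
\[
\sum_{e\text{ at }v_1}\kappa_e=1+(d_{v_1}-2)\,\frac{1/w_f}{D_{v_1}}.
\]
The residual term is nonnegative and bounded by $\frac{w_e}{w_f}$ for the maximal leaf $e$; since $w_e\le Ce^{-t/(d_{v_1}-1)}$ and $w_f\ge w_f(0)>0$, it is integrable on $[0,\infty)$. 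Performing the same computation at $v_\ell$ gives $\sum_{e\in E_1}\kappa_e=2+\varepsilon(t)$ with $\int_0^\infty\varepsilon<\infty$, so $\prod_{E_1}\tilde w_e\cdot e^{2t}$ converges to a positive limit; combining with~\eqref{producofweight} shows the desired product converges to a finite positive value, in particular it is bounded. The main obstacle is exactly this integrability: everything hinges on the two endpoints together contributing the full factor $e^{-2t}$, which rests on the sharp rate $\kappa_e\ge\frac{1}{d_{v_1}-1}$ from Lemma~\ref{lem:maxleaf} and on $w_f$ staying away from $0$. Once the rate is pinned down, parts (1) and (2) are bookkeeping over the already-established asymptotics.
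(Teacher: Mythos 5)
Your proposal is correct, and for parts (1)--(2) it follows the paper's own route: the curvature limits come from Proposition \ref{Pro:k_etendto0} and Corollary \ref{pro:kf=0}, and the normalized leaf weights vanish because $\tilde w_e/\tilde w_f\to 0$ against an adjacent internal edge (you are in fact more careful than the paper in extending from the maximal leaf to the others via Proposition \ref{pro:leaf=leaf}(iii)). Where you genuinely diverge is part (3). The paper proves integrability of $\sum_{e\in L}\kappa_e(t)-2$ by establishing \emph{exponential} convergence of each endpoint-leaf curvature: it first shows the ratios $\tilde w_e/\tilde w_g\to 1$ and $\tilde w_e/\tilde w_{v_1v_2}\to 0$ exponentially, deduces exponential convergence of $\kappa_e$, rewrites $-\sum_{uv\in E\setminus L}\kappa_{uv}=\sum_{e\in L}\kappa_e-2$ by Gauss--Bonnet, and integrates this over $E\setminus L$ to bound the product. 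You instead derive the exact identity $\sum_{e\sim v_1}\kappa_e=1+(d_{v_1}-2)\frac{1/w_f}{D_{v_1}}$, bound the nonnegative residual by $(d_{v_1}-2)\,w_e/w_f$ for the maximal leaf, and obtain integrability from the rate $w_e\le Ce^{-t/(d_{v_1}-1)}$ of Lemma \ref{lem:maxleaf} together with the monotone lower bound $w_f\ge w_f(0)$; you then divide the global product formula \eqref{producofweight} instead of integrating over $E\setminus L$. The two arguments are equivalent uses of the same conservation law (\eqref{producofweight} is the integrated Gauss--Bonnet identity), but your identity is cleaner: it exhibits the curvature excess at each endpoint as a single explicit nonnegative term, bypasses the paper's chain of exponential-convergence deductions, shows as a byproduct that $\sum_{e\in L}\kappa_e\ge 2$ for all $t$, and yields slightly more than boundedness, namely convergence of the product over $E\setminus E_1$ to a positive finite limit.

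Two small patches are needed. First, in part (2) your dichotomy for interior-spine leaves ("either $\tilde w_e\to 0$ or $\tilde w_e$ tends to a positive limit") omits the possibility $\tilde w_e\to\infty$; either invoke Corollary \ref{cor:wefinite} (the weight is bounded because $\kappa_e\to 0$), or simply observe that the normalized weight is at most $\tilde w_e/\tilde w_f\to 0$, which covers all cases at once and is exactly the paper's argument. Second, Lemma \ref{lem:maxleaf} requires $d_u\ge 3$, so when $d_{v_1}=2$ your rate argument does not literally apply --- but in that case $\kappa_e\equiv 1$ and your residual term $(d_{v_1}-2)\frac{1/w_f}{D_{v_1}}$ vanishes identically, so the integrability you need holds trivially.
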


\begin{proof}
 \textbf{Proof of result (1) }  The limit of curvature follows from Proposition~\ref{Pro:k_etendto0} and Corollary~\ref{pro:kf=0}. 

 \textbf{Proof of result (2) } 
This result is derived from Proposition~\ref{Pro:k_etendto0}. Within the proof, each leaf $e$ is connected to an internal edge $f$, where $\frac{w_e}{w_f}(t)\to 0$.

 \textbf{Proof of result (3) } 
Let $L$ be the set of leaf edges incident to $v_1$ or $v_{\ell}$. 
For finite $t$,  
\begin{align*}
  \sum\limits_{\text{leaf~}e\sim v_1}\kappa_e(t)  &=d_{v_1}-1-\frac{d_{v_1}-2}{D_{v_1}} \sum\limits_{\text{leaf~}e\sim v_1} \frac{1}{\tilde{w}_e}\\
  &>d_{v_1}-1 -(d_{v_1}-2)\\
  &>1, 
\end{align*}
similarly,  \[ \sum\limits_{\text{leaf~}e\sim v_{\ell}}\kappa_e(t) >1,\]

Then  
 \[\sum\limits_{e\in L}\kappa_e(t)\downarrow 2, \quad t\to \infty,\] 
 where $\downarrow 2$ means decrease to value $2$.
 
Recall from the proof of Proposition~\ref{pro:leaf=leaf} that for a pair of leaves $e, g$ incident to the same vertex $u$ with degree $d_u \ge 3$, the ratio of their weights satisfies
\[
\frac{w_e(t)}{w_g(t)} - 1 \le \Big(\frac{w_e(0)}{w_g(0)} - 1\Big) e^{\frac{2-d_u}{d_u} t}, 
\]
then $\frac{w_e(t)}{w_g(t)}$ converges exponentially to $1$ as $t \to \infty$.  

For pair of leaf $e$ and internal edge  $v_1v_2$ incident to $v_1$, we have  \[\frac{\partial}{\partial t} \log \frac{\tilde{w}_e}{\tilde{w}_{v_1v_2}}=(\kappa_{v_1v_2}-\kappa_e)\to  -\frac{1}{d_v-1},   \quad t\to \infty, \]
 then $\frac{\tilde{w}_e}{\tilde{w}_{v_1v_2}}$ exponentially decay to $0$. 

Noting that 
\begin{align*}
    \kappa_e& =1-\frac{d_{v_1}-2}{\sum\limits_{\text{leaf~}v_1x}\frac{\tilde{w}_e}{\tilde{w}_{v_1x}} + \frac{\tilde{w}_e}{\tilde{w}_{v_1v_2}}}, 
    \end{align*}
we have the exponential convergence for $\kappa_e(t)$ since the denominator exponentially converges. 
A similar result holds for $\kappa_e$ when $e\sim v_{\ell}$.

Hence, we  conclude 
\[
\sum_{e \in L} \kappa_e(t) -2\longrightarrow 0\quad \text{exponentially as } t \to \infty.
\]
Using
\[
-\sum_{uv \in E\setminus L} \kappa_{uv}(t) = \sum_{e \in L} \kappa_e(t)-2.
\]
Then 
\[
0< -\sum_{uv \in E\setminus L}\kappa_{uv}(t)  \leq  C_1 e^{-C_2 t}
\]
for some \(C_1, C_2>0\),
which implies that 
\[
\int_0^\infty -\sum_{uv \in E\setminus L}\kappa_{uv}(s) \, ds <  C_1/C_2.
\]

Using
\[
\prod_{uv \in E\setminus L} \tilde w_{uv}(t) = \Big(\prod_{uv \in E\setminus L} \tilde w_{uv}(0)\Big) \exp\Bigg( \int_0^t \big(\,-\sum_{uv \in E\setminus L} \kappa_{uv}(s)\,\big) ds \Bigg),
\]

we get 
\[
\prod_{uv \in E\setminus L} \tilde w_{uv}(t) 
 \leq  \Big(\prod_{uv \in E\setminus L} \tilde w_{uv}(0)\Big) e^{C_1/C_2},
\]
which is finite.
\end{proof}
Certain simulations involving caterpillar trees, as presented in the Appendix~\ref{appen}, show that the bounded product condition allows some unnormalized weights to become very large. 

\begin{remark}\label{re:initialaffect}
  The initial metric can influence the limiting weight value for a caterpillar tree. Consider a caterpillar tree where each spine vertex has a degree of $2$. Thus, leaf edges  only incident to the endpoints of the main path.

According to result (3) of Lemma~\ref{lem:caterconverge}, the product of unnormalized weights across all internal edges is finite, alongside the fact that the unnormalized weight on any internal edge is non-decreasing. then 
\[\lim\limits_{t\to \infty} \tilde{w}_h(t)<\infty ~~\text{for each internal edge~} h.\]

  Since the unnormalized weight at the leaves tends towards zero, we  have $$\lim\limits_{t\to \infty}\sum\limits_{h\in E} \tilde{w}_h(t) = \lim\limits_{t\to \infty}\sum\limits_{\text{internal }f\in E}  \tilde{w}_h(t)<\infty. $$

Now choose two initial unnormalized metrics, $\tilde{w}(0)$ and $ \tilde{w}^*_(0)$, such that $$\sum\limits_{h\in E}  \tilde{w}^*_h(0) >\lim\limits_{t\to \infty} \sum\limits_{h\in E}  \tilde{w}_h(t).$$

It follows that
   $$\lim\limits_{t\to \infty} \sum\limits_{h\in E}  \tilde{w}^*_h(t)>\lim\limits_{t\to \infty} \sum\limits_{h\in E}  \tilde{w}_h(t).$$
   
Now consider an internal edge $f$ whose endpoints are of degree $2$,  let the unnormalized weight satisfies $\tilde{w}_f(0)= \tilde{w}^*_f(0)$, then $ \tilde{w}_f(t)= \tilde{w}^*_f(t)$ for all $t$ as its derivative is always $0$.  Consequently, the normalized weight  on $f$ satisfies $$\lim\limits_{t\to \infty}  \tilde{w}^*_f(t)<\lim\limits_{t\to \infty}  \tilde{w}_f(t).$$
\end{remark}


\begin{lemma}\label{non-caterpillar}
In every non-caterpillar tree $T$, there are at least three internal vertices, denoted by $v$, each connected to precisely $d_v-1$ leaves.
\end{lemma}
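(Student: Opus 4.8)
The plan is to reduce the statement to a purely combinatorial fact about the \emph{internal tree} $T'$, namely the subgraph of $T$ induced by its internal vertices (those of degree at least $2$). First I would record two elementary observations: (a) $T'$ is itself a tree, since in any tree the non-leaf vertices span a connected subtree --- for any two internal vertices the unique path joining them in $T$ consists entirely of internal vertices, as every intermediate vertex on that path has two distinct neighbours on it and hence degree at least $2$; and (b) deleting the leaf edges of $T$ turns the degree of an internal vertex $v$ into $d_v'' = d_v - d_v'$, so that $v$ is a \emph{leaf of} $T'$ (degree $1$ in $T'$) exactly when $d_v'' = 1$, i.e.\ exactly when $d_v' = d_v - 1$. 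Thus the internal vertices sought in the lemma are precisely the leaves of $T'$, and it suffices to show that a non-caterpillar tree has an internal tree with at least three leaves.

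The key implication I would establish is: \emph{if $T'$ is a path, then $T$ is a caterpillar.} Writing $T' = v_1 - v_2 - \cdots - v_\ell$ and taking this path as the spine $P$, every vertex of $T$ not on $P$ is a leaf (by definition of $T'$), the two endpoints $v_1, v_\ell$ satisfy $d_{v_i}'' = 1$ and hence carry exactly $d_{v_i} - 1$ leaves, and each interior vertex $v_i$ satisfies $d_{v_i}'' = 2$ and hence carries exactly $d_{v_i} - 2$ leaves. These are precisely the defining conditions of Definition~\ref{caterpi}, so $T$ is a caterpillar. Here I would also dispose of the degenerate cases: if $T'$ is a single vertex then $T$ is a star, and if $T'$ is a single edge then $T$ is a double broom; both are caterpillars (take the spine to be an edge incident to the single vertex, respectively the single edge itself). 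Hence, regarding single vertices and single edges as degenerate paths, $T'$ being a path always forces $T$ to be a caterpillar.

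Taking the contrapositive, a non-caterpillar $T$ has an internal tree $T'$ that is not a path. Since every tree on at most two vertices is a (possibly degenerate) path, $T'$ has at least three vertices; and a tree on at least two vertices has at least two leaves, with equality precisely when it is a path, so $T'$ --- a non-path tree on at least three vertices --- has at least three leaves. By observation (b) these leaves correspond to at least three internal vertices $v$ of $T$ with $d_v' = d_v - 1$, which is the claim.

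I expect the only delicate point to be the bookkeeping around the correspondence in observation (b) and the degenerate endpoints --- in particular making sure every leaf of $T'$ really is an internal vertex of $T$ (it has degree $d_v = d_v' + d_v'' \geq 2$, since $d_v'' = 1$ and $d_v' \geq 1$), and that the spine conditions of Definition~\ref{caterpi} are matched \emph{exactly} rather than merely up to an inequality. The underlying graph theory is otherwise routine.
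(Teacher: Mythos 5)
Your proof is correct, and it takes a genuinely different route from the paper's. The paper argues directly inside $T$: it asserts, as immediate from Definition~\ref{caterpi}, that a non-caterpillar tree has a vertex $v_0$ with at least three internal edges, and then runs an extremal argument in each branch at $v_0$ (a leaf $l$ maximizing $d(v_0,l)$ within its branch must have an internal neighbour $v$ with exactly one internal edge, hence $d_v'=d_v-1$), repeating over three branches. You instead pass to the internal tree $T'$, identify the desired vertices exactly with the leaves of $T'$ (via $d_v''=1 \Leftrightarrow d_v'=d_v-1$), and reduce the lemma to the standard fact that a tree which is not a path has at least three leaves. Two things are gained by your packaging: first, the paper's opening step (``by definition, a non-caterpillar tree contains a vertex with three or more internal edges'') is precisely the contrapositive of your implication ``$T'$ a path $\Rightarrow$ $T$ a caterpillar,'' which you actually prove by checking the spine conditions of Definition~\ref{caterpi} exactly, so your argument fills in a step the paper leaves as an assertion; second, your leaf-of-$T'$ correspondence replaces the paper's distance-maximizing leaf construction (which is, in effect, a hands-on proof that each branch of $T'$ ends in a leaf of $T'$) with a single citation of a classical fact, and it yields the sharper structural statement ``caterpillar $\Leftrightarrow$ internal tree is a path'' as a by-product. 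Your attention to the degenerate cases (stars, double brooms, $T'$ of one or two vertices) is appropriate, though under the paper's standing assumption that $T$ is neither a path nor a star these cases are excluded anyway.
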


\begin{proof}
By definition, a non-caterpillar tree contains at least one vertex $v_0$ with three or more internal edges.  
Since $T$ is finite, each internal edge of $v_0$ that does not continue indefinitely must eventually terminate at a leaf. 
Let $L_0$ denote the set of such leaves. 
Choose $l \in L_0$ so that, among all leaves in $L_0$ arising from the same branch at $v_0$, the distance $d(v_0,l)$ is maximal.  

If the edge incident to $l$ were adjacent to two or more internal edges, 
then one of these would not lie on the path from $v_0$ to $l$, 
and hence would produce another leaf $l'$ with $d(v_0,l') > d(v_0,l)$, a contradiction. 
Thus the edge incident to $l$ is adjacent to exactly one internal edge, 
so its neighbor $v$ (the internal vertex preceding $l$) is incident to $d_v-1$ leaves.  

Repeating this argument for at least three different branches from $v_0$ shows that there are at least three such internal vertex $v$ in $T$ that are adjacent to $d_v-1$ leaves. 
\end{proof}

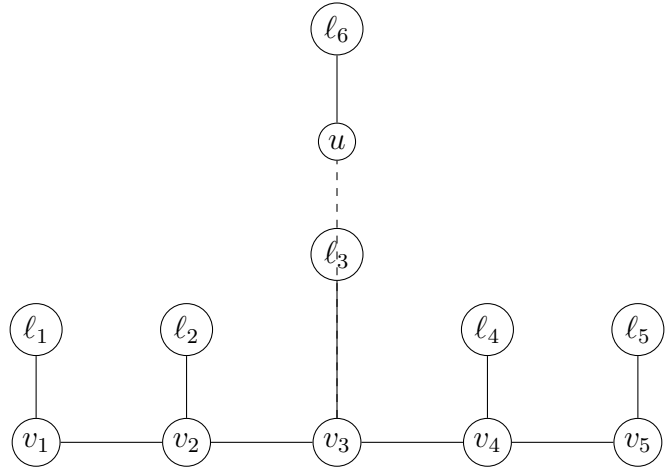
\begin{figure}[h!]
\centering
\begin{tikzpicture}[scale=1, every node/.style={circle,draw,inner sep=2pt}]
  \node (v1) at (0,0) {$v_1$};
  \node (v2) at (2,0) {$v_2$};
  \node (v3) at (4,0) {$v_3$}; 
  \node (v4) at (6,0) {$v_4$};
  \node (v5) at (8,0) {$v_5$};

  \node (l1) at (0,1.5) {$\ell_1$};
  \node (l2) at (2,1.5) {$\ell_2$};
  \node (l3) at (4,2.5) {$\ell_3$}; 
  \node (l4) at (6,1.5) {$\ell_4$};
  \node (l5) at (8,1.5) {$\ell_5$};

  \draw (v1) -- (v2) -- (v3) -- (v4) -- (v5);

  \draw (v1) -- (l1);
  \draw (v2) -- (l2);
  \draw (v3) -- (l3); 
  \draw (v4) -- (l4);
  \draw (v5) -- (l5);

  \node (u) at (4,4) {$u$};
  \draw[dashed] (v3) -- (u); 

  \node (l6) at (4,5.5) {$\ell_6$};
  \draw (u) -- (l6);
\end{tikzpicture}
\caption{A non-caterpillar tree: vertex $v_3$ has three internal edges (solid along spine and dashed extra branch), which forces at least one extra leaf $\ell_6$ not counted in the spine formula.}
\label{fig:non-caterpillar}
\end{figure}

\begin{lemma}
\label{lem:noncatinf}
Let $T$ be a non-caterpillar tree evolving under the Ricci flow \eqref{eq:unnor_continuous}
with unnormalized weights $\tilde w$. Then there exists at least one internal edge $f$ such that
\[
\lim_{t \to \infty} \tilde w_f(t) = \infty.
\]
Furthermore, the total of the unnormalized weights across all internal edges
\[
S(t) := \sum_{h \in H} \tilde w_h(t), 
\]
grows to infinity exponentially. 
\end{lemma}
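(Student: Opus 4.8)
The overall strategy rests on two global facts established earlier: the curvatures of all edges sum to the constant $2$, and every internal curvature is non-positive. Writing $L$ for the leaf edges and $H$ for the internal edges, this gives $\sum_{e\in L}\kappa_e(t)=2-\sum_{h\in H}\kappa_h(t)=2+\sum_{h\in H}|\kappa_h(t)|$ for every $t$. Since the unnormalized flow gives $(\log\tilde w_h)'=-\kappa_h$, the product $P(t):=\prod_{h\in H}\tilde w_h(t)$ satisfies $(\log P)'(t)=\sum_{h\in H}|\kappa_h(t)|=\big(\sum_{e\in L}\kappa_e(t)\big)-2\ge 0$, so $P$ is non-decreasing; and by the arithmetic--geometric mean inequality $S(t)=\sum_{h\in H}\tilde w_h(t)\ge |H|\,P(t)^{1/|H|}$. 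Thus both assertions will follow from the size of $\int_0^t\sum_{h\in H}|\kappa_h|$: unboundedness of this integral yields an internal edge tending to infinity, and a positive linear lower bound yields exponential growth of $S$.

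For the first assertion I argue by contradiction, assuming every internal weight stays bounded. Each is non-decreasing (its curvature is $\le 0$), hence converges, so Proposition~\ref{pro:leafinternal1}(ib) gives $\kappa_h\to 0$ for all $h\in H$. No leaf-incident vertex can then be incident to at most $d_u-3$ leaves: by Corollary~\ref{cor:wefinite} its leaves would be unbounded, i.e. $\tilde w_e\to\infty$, and Proposition~\ref{pro:leavesdecrefaster}(i) would force every internal edge at that vertex to be unbounded, a contradiction. Hence each leaf-incident vertex carries $d_u-1$ or $d_u-2$ leaves, and Proposition~\ref{Pro:k_etendto0} gives $\kappa_e\to\frac{1}{d_u-1}$ or $\kappa_e\to 0$ respectively; summing over all leaves, $\sum_{e\in L}\kappa_e\to N$, where $N$ is the number of vertices incident to $d_u-1$ leaves. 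On the other hand $\sum_{e\in L}\kappa_e=2-\sum_{h\in H}\kappa_h\to 2$, while Lemma~\ref{non-caterpillar} gives $N\ge 3$. The contradiction $3\le N=2$ shows some internal edge is unbounded, hence, being monotone, tends to infinity.

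For the exponential growth I reduce everything to a uniform-in-time positive lower bound on $\sum_{h\in H}|\kappa_h|$. Reorganizing $\kappa_{uv}=\kappa_{u\to v}+\kappa_{v\to u}$ by vertices and using $\sum_{x\sim u}\kappa_{u\to x}=2-d_u$, one obtains $\sum_{h\in H}\kappa_h=-\sum_{u:\,d_u\ge 2}(d_u-2)(1-\rho_u)$, where $\rho_u:=\big(\sum_{e\sim u\text{ leaf}}\tfrac{1}{\tilde w_e}\big)/\big(\sum_{x\sim u}\tfrac{1}{\tilde w_{ux}}\big)\in[0,1]$ is the share of inverse weight carried by the leaves at $u$. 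Only vertices with $d_u\ge 3$ contribute; for vertices incident to $d_u-1$ or $d_u-2$ leaves the leaves dilute, $\rho_u\to 1$, and the contribution vanishes, so the entire negative mass sits on the branch vertices $v_0$ with $d''_{v_0}\ge 3$, which exist in any non-caterpillar tree. If some such $v_0$ carries no leaf, then $\rho_{v_0}\equiv 0$, so $\sum_{h\in H}\kappa_h\le -(d_{v_0}-2)\le -1$ for all $t$; then $(\log P)'\ge 1$, giving $P(t)\ge P(0)e^{t}$ and hence exponential growth of $S$ by the AM--GM bound above.

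The remaining and main difficulty is a branch vertex $v_0$ ($d''_{v_0}\ge 3$) that also carries leaves; its leaves grow to infinity, so $\rho_{v_0}=0$ fails and I must instead keep $\rho_{v_0}$ away from $1$. The clean way to finish is to show that every internal curvature converges, and then argue as follows: if $\liminf_t\sum_{h\in H}|\kappa_h|=0$ then $\kappa_h\to 0$ for all $h\in H$, whence $\sum_{e\in L}\kappa_e\to 2$; but the vertices incident to $d_u-1$ leaves already contribute $N\ge 3$ to this sum, so some vertex $u$ incident to at most $d_u-3$ leaves must have $\kappa_e\to c_u<0$ on its leaves. Such leaves then grow at a strictly positive exponential rate, while the internal edges at $u$ grow subexponentially (their curvatures tend to $0$); hence $\tilde w_e\gg\tilde w_f$ for each internal $f\sim u$, forcing $\rho_u\to 0$ and therefore $\kappa_e\to 1$, contradicting $\kappa_e\to c_u<0$. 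The technical obstacle is precisely establishing this convergence of the internal-edge curvatures (for edges whose weight tends to infinity there is no finite integral to feed Barbalat's lemma); in its absence one must run the same comparison along subsequences together with an infinite-descent argument along the internal edges of the finite tree, which is where the real work lies.
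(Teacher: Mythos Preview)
Your argument for the first assertion is correct and close in spirit to the paper's, but for the exponential growth you have left a genuine gap that you yourself flag: the ``branch vertex $v_0$ with $d''_{v_0}\ge 3$ that also carries leaves''. You try to control $\rho_{v_0}$ and end up needing convergence of internal-edge curvatures, which you cannot establish.

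The point you are missing is that this ``hard case'' is actually the easy one. A branch vertex $v_0$ carrying leaves with $d''_{v_0}\ge 3$ satisfies $d'_{v_0}\le d_{v_0}-3$, and this is precisely the hypothesis of Proposition~\ref{pro:kenegative}: for large $t$ one has $\kappa_e(t)\le -d'_{v_0}/d_{v_0}$, a fixed negative constant. Hence $\tilde w_e(t)$ grows at least like $e^{(d'_{v_0}/d_{v_0})t}$, and by Proposition~\ref{pro:leafinternal1}(ii) the adjacent internal edge $f\sim e$ eventually dominates $\tilde w_e$, so $S(t)\ge \tilde w_f(t)$ grows exponentially. No control of $\rho_{v_0}$, no convergence of internal curvatures, and no subsequence descent is needed.

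The paper organizes the whole proof around this dichotomy: Case~1 is ``some leaf-incident vertex has $d'_u\le d_u-3$'', handled in one line via Proposition~\ref{pro:kenegative}; Case~2 is ``every leaf-incident vertex has $d'_u\ge d_u-2$'', where all leaf curvatures converge (to $0$ or $\tfrac{1}{d_u-1}$) by Proposition~\ref{Pro:k_etendto0}, and then your sum/AM--GM argument with Lemma~\ref{non-caterpillar} gives $\sum_{h\in H}\kappa_h\le -\delta$ for large $t$. Your $\rho_u$-decomposition is a nice way to see where the negative mass sits, but it is not needed once you make this case split first.
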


\begin{proof}
There are two cases to consider.

Case 1. ~If there is a leaf $e=uv$ with $d_u\geq 3$ that satisfies Proposition~\ref{pro:kenegative},  we have $\kappa_e(t)<-\frac{d'_u}{d_u}$, then \[\tilde w_e(t)\geq \tilde w_e(t_0) e^{\frac{d'_u}{d_u}t}\] for some $t_0>0$. 
 Let $f\sim e$ be an internal edge adjacent to $e$, by Proposition~\ref{pro:leafinternal1},  \[\tilde w_f(t) \gg \tilde w_e(t),\] for large time $t$. The results ensued.

Case 2.  
Let $H$ denote the set of internal edges and $L$ the set of leaves. For any $e\in L$, either $\kappa_e(t)\to 0$ or $\kappa_e(t)\to \frac{1}{d_v-1}$ where $v$ denotes the internal vertex of $e$. 
By Lemma~\ref{non-caterpillar}, for a tree that is not a caterpillar, there are at least three internal vertices with leaves that lead to such positive curvature limits. 

Hence, for sufficiently large $t \ge t_0$,
\[
\sum_{e \in L} \kappa_e(t) \ge 2 + \delta
\]
for some $\delta > 0$. Using the total curvature formula $\sum_{e \in E(T)} \kappa_e(t) = 2$, we obtain
\[
\sum_{h \in H} \kappa_h(t) = 2 - \sum_{e \in L} \kappa_e(t) \le -\delta < 0, \quad t \ge t_0.
\]

Consider the product of internal edge weights
\[
P(t) := \prod_{h \in H} \tilde w_h(t).
\]
Differentiating along the Ricci flow:
\[
\frac{d}{dt} P(t) = \sum_{h \in H} \frac{d \tilde w_h}{dt} \prod_{h' \in H \setminus \{h\}} \tilde w_{h'} 
= - \sum_{h \in H} \kappa_h(t) \prod_{h \in H} \tilde w_h(t) 
= - \Big(\sum_{h \in H} \kappa_h(t)\Big) P(t).
\]

For $t \ge t_0$, $\sum_{h \in H} \kappa_h(t) \le -\delta$, so
\[
\frac{d}{dt} P(t) \ge \delta P(t) \quad \implies \quad P(t) \ge P(t_0) e^{\delta (t - t_0)}.
\]
Since $H$ is finite, the unbounded growth of $P(t)$ implies that at least one internal edge weight satisfies
\[
\limsup_{t \to \infty} \tilde w_f(t) = \infty.
\]
Since $\tilde w_f(t)$ is strictly increasing, then 
\[
\lim\limits_{t \to \infty} \tilde w_f(t) = \infty.
\]

Now consider the sum of the weights
\[
S(t) := \sum_{h \in H} \tilde w_h(t).
\]
By the AM–GM inequality,
\[
S(t) = \sum_{h \in H} \tilde w_h(t) \ge |H| \left( \prod_{h \in H} \tilde w_h(t) \right)^{1/|H|} = |H| \, P(t)^{1/|H|}.
\]
Using the exponential lower bound for $P(t)$, we obtain
\[
S(t) \ge |H| \, P(t_0)^{1/|H|} \, e^{\frac{\delta}{|H|} (t-t_0)}.
\]
Thus $S(t)$ grows at least exponentially.

\end{proof}

We then prove that for any non-caterpillar tree, if the curvature on an edge converges to zero, the edge weight cannot remain positive. Consequently, the tree cannot reach a metric with zero constant curvature.

\begin{lemma}\label{the:intenraledgefiniteweight}
Let T be any  non-caterpillar tree, let the metric $w$ of tree T evolve under Ricci flow with normalized weight $w$.
Let $f$ be an edge such that $\lim\limits_{t\to \infty} \kappa_f(t)=0, $ then  $\lim\limits_{t\to \infty} w_f(t)=0$. Moreover, $w_f(t)$ decays at least exponentially. 

\end{lemma}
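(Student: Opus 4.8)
The plan is to work entirely with the unnormalized flow $\tilde w$ and to exploit the scale-invariance of the curvature under the choice $\gamma(x)=\tfrac1x$, so that $\kappa_f(t)$ is identical for the normalized and unnormalized flows. This lets me compute both the numerator $\tilde w_f(t)$ and the total weight $\sum_{h\in E}\tilde w_h(t)$ from the unnormalized equation, and then recover the normalized weight through the identity \eqref{equ:unnor-nor}. The whole argument is a race between the subexponential growth of the numerator and the genuinely exponential growth of the denominator.

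First I would control the numerator. From $(\log \tilde w_f)'=-\kappa_f$ and the hypothesis $\kappa_f(t)\to 0$, for every $\epsilon>0$ there is a time $t_\epsilon$ with $|\kappa_f(\tau)|<\epsilon$ for all $\tau\ge t_\epsilon$, hence
\[
\tilde w_f(t)\le \tilde w_f(t_\epsilon)\,e^{\epsilon(t-t_\epsilon)},\qquad t\ge t_\epsilon.
\]
Thus $\tilde w_f$ is dominated by $e^{\epsilon t}$ for arbitrarily small $\epsilon$; its exponential growth rate is zero. Next I would bound the denominator from below. Since all weights are positive, $\sum_{h\in E}\tilde w_h(t)\ge S(t)=\sum_{h\in H}\tilde w_h(t)$, where $H$ is the set of internal edges, and Lemma~\ref{lem:noncatinf} supplies a constant $\delta>0$ and a time $t_0$ such that
\[
\sum_{h\in E}\tilde w_h(t)\ge S(t)\ge |H|\,P(t_0)^{1/|H|}\,e^{\frac{\delta}{|H|}(t-t_0)},\qquad t\ge t_0.
\]

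Now I would fix $\epsilon=\tfrac{\delta}{2|H|}$, which is strictly smaller than the rate $\tfrac{\delta}{|H|}$ of the denominator, and combine the two one-sided bounds through \eqref{equ:unnor-nor}. For $t$ large this yields
\[
w_f(t)=\frac{\tilde w_f(t)}{\sum_{h\in E}\tilde w_h(t)}\le C\,e^{\left(\epsilon-\frac{\delta}{|H|}\right)t}=C\,e^{-\frac{\delta}{2|H|}t}
\]
for a constant $C>0$ absorbing the fixed factors at $t_\epsilon$ and $t_0$. This shows $w_f(t)\to 0$ at an exponential rate, proving both assertions at once; note the argument makes no use of whether $f$ is a leaf or an internal edge.

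The main obstacle is not the final comparison, which is routine once the two bounds are in hand, but rather securing a genuine gap between the two growth rates. This gap exists precisely because the hypothesis $\kappa_f\to 0$ permits the numerator's exponential rate to be taken arbitrarily small, whereas Lemma~\ref{lem:noncatinf} furnishes a fixed positive rate $\tfrac{\delta}{|H|}$ for the denominator that is independent of $f$. The non-caterpillar hypothesis enters only through this lemma, via the existence of at least three internal vertices carrying leaves of positive limiting curvature (Lemma~\ref{non-caterpillar}), which forces $\sum_{h\in H}\kappa_h(t)\le-\delta$ and hence the exponential blow-up of the internal weight sum.
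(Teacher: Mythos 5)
Your proof is correct, and its core mechanism --- a race between the at-most-$e^{\epsilon t}$ growth of $\tilde w_f$ and a fixed exponential lower bound on the total unnormalized weight --- is exactly the mechanism of the paper's proof; the difference is structural. The paper mirrors the two-case analysis in the proof of Lemma~\ref{lem:noncatinf}: in its Case 1 (some leaf $e$ satisfies Proposition~\ref{pro:kenegative}, so $\kappa_e \le -d'_u/d_u$ eventually), the paper does not use $S(t)$ at all, but instead bounds the normalized weight by a single ratio, $w_f(t) \le \tilde w_f(t)/\tilde w_e(t)$, whose logarithmic derivative is $\kappa_e - \kappa_f \le -d'_u/d_u + \varepsilon$; only in its Case 2 does it invoke the $S(t)$ bound, exactly as you do. Your unified argument is nevertheless legitimate, because the \emph{statement} of Lemma~\ref{lem:noncatinf} asserts exponential growth of $S(t)$ for every non-caterpillar tree, and that is established in both cases of its proof (in Case 1 via $S(t) \ge \tilde w_{f'}(t) > \tilde w_e(t) \ge \tilde w_e(t_0)\,e^{(d'_u/d_u)(t-t_0)}$ for the internal edge $f'$ adjacent to $e$). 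The one imprecision is attributing the explicit constants $|H|\,P(t_0)^{1/|H|}$ and rate $\delta/|H|$ to the lemma's statement: those are produced only in Case 2 of its proof, and in Case 1 the rate would instead be $d'_u/d_u$. Since your comparison needs only \emph{some} fixed positive exponential rate $\lambda$ for the denominator --- against which $\epsilon$ can be taken as $\lambda/2$ --- this creates no gap; it would be cleanest to quote the denominator bound abstractly as $S(t) \ge c\,e^{\lambda t}$ with $c,\lambda>0$, which is what the lemma guarantees. Your version buys a case-free, shorter proof; the paper's version buys explicit decay rates tied to the local structure ($d'_u/d_u$ or $\delta/|H|$) in each regime.
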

\begin{proof}
For the edge $f$ with $\kappa_f(t) \to 0$, there exists $T \ge t_0$ such that $|\kappa_f(t)| \le \varepsilon$ for $t \ge T$, and thus
\[
\tilde w_f(t) = \tilde w_f(T) \exp\Big(-\int_T^t \kappa_f(s) \, ds \Big) \le \tilde w_f(T) e^{\varepsilon (t-T)}.
\]

We now consider the two cases corresponding to Lemma~\ref{lem:noncatinf}:

\textbf{Case 1.} There exists a leaf $e$ that satisfies Proposition~\ref{pro:kenegative}, then there exists $t_0>0$ such that $\kappa_e(t)\leq -\frac{d'_u}{d_u}$ for $t\geq T$.
It follows that \[\frac{\partial}{\partial t} \log \frac{\tilde{w}_f}{\tilde{w}_e} =(\kappa_e-\kappa_f)\leq -\frac{d'_u}{d_u}+\epsilon.\]
Then, we have the normalized weight
\[w_f(t)<\frac{1}{\frac{\tilde{w}_e}{\tilde{w}_f}}\leq \frac{\tilde{w}_f}{\tilde{w}_e}\leq \frac{\tilde{w}_f}{\tilde{w}_e}(t_0) e^{ (-\frac{d'_u}{d_u}+\epsilon)(t-T)}.\]

\textbf{Case 2.} Using the lower bound for $S(t)$ from Lemma~\ref{lem:noncatinf},  for $t \ge T$,
\[
w_f(t) < \frac{\tilde w_f(t)}{S(t)} \le \frac{\tilde w_f(T) e^{\varepsilon (t-T)}}{|H| P(t_0)^{1/|H|} e^{\frac{\delta}{|H|} (t-t_0)}} 
= C \, e^{-\frac{\delta}{|H|} (t-t_0) + \varepsilon (t-T)},
\]
for some constant $C>0$ depending on the initial weight.

For both cases, taking $\varepsilon$ arbitrarily small (since $\kappa_f(t) \to 0$), we obtain the exponential decay of the normalized weight $w_f(t)$. 

\end{proof}

\textbf{Proof of main Theorem \ref{thm:limexist0}}
\begin{proof}
  The result follows from Lemmas \ref{lem:caterconverge} and \ref{the:intenraledgefiniteweight} and Remark  \ref{re:initialaffect}.
\end{proof}
\begin{remark}
  We have established the curvature limit for the caterpillar tree and the normalized weight limit for all leaves of these trees. Yet, if any internal edges diverge to infinity, the existence of a normalized weight limit remains unresolved.
\end{remark}


\section{Exploring the Convergence of Curvature and Weights}

As stated in Conjecture \ref{conj:constant}, the key to the proof lies in demonstrating that the curvature on all edges converges.  
Once the conjecture is validated, we can derive a balanced condition satisfied by the limiting weights on the tree.

When the Ricci flow terminates, the tree may split into several distinct subtrees if any internal edges shrink to zero weight. These zero-weight internal edges, referred to as \textit{cutting edges}, partition the tree into subtrees where all edges retain positive weights.

Given the constant curvature across the positively weighted edges, the edge weights within each subtree are uniquely determined. Specifically, this corresponds to solving the system of equations 
\[
\kappa_{uv}(\infty) = \kappa, \quad \text{for every } uv \in E^+,
\]
where \(\kappa\) denotes the constant curvature. While this system is inherently non-linear, most of it can be linearized effectively by leveraging the following insights. 

Within each subtree, there exist pairs of \textit{terminal edges}, which are either leaf edges or internal edges adjacent to cutting edges. A \textit{maximal path} is defined as a sequence of edges connecting two terminal edges. The edge weights within each subtree adhere to specific balance conditions, as outlined below.

\begin{lemma}\label{lem:balancedweight}
Let \(T\) be a tree equipped with an initial metric \(w \in \mathbb{R}^{E(T)}_{>0}\). Suppose the metric \(w\) evolves under the Ricci flow on \(T\) and achieves a negative constant curvature at the limit. Then, for any positively weighted maximal path \(P = u_0 u_1 \ldots u_l\), the edge weights satisfy the following equations:
\begin{enumerate}
    \item If both ends of \(P\) are internal edges:
    \[
    \kappa w_1 - \kappa w_2 + \kappa w_3 - \cdots + (-1)^{l+1} \kappa w_l = 0.
    \]
    \item If the terminal edge \(w_l\) is a leaf edge:
    \[
    \kappa w_1 - \kappa w_2 + \kappa w_3 - \cdots + (-1)^{l+1} (\kappa - 1) w_l = 0.
    \]
    \item If both \(w_1\) and \(w_l\) are leaves:
    \[
    (\kappa - 1) w_1 - \kappa w_2 + \kappa w_3 - \cdots + (-1)^{l+1} (\kappa - 1) w_l = 0.
    \]
\end{enumerate}
\end{lemma}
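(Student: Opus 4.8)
The plan is to reduce the constant-curvature condition along $P$ to a linear recurrence in the edge weights by localizing the curvature at the vertices. The starting point is an observation that is immediate once $\gamma(x)=\tfrac1x$: the outgoing half-curvature $\kappa_{u\to x}$ from \eqref{equ:kutov} simplifies to $\kappa_{u\to x}=\frac{2-d_u}{w_{ux}D_u}$, so the product $\kappa_{u\to x}\,w_{ux}=\frac{2-d_u}{D_u}=:c_u$ depends only on the vertex $u$ and not on the neighbour $x$. Writing each edge curvature as the sum of its two half-curvatures, $\kappa_{u_{i-1}u_i}=\kappa_{u_{i-1}\to u_i}+\kappa_{u_i\to u_{i-1}}=\frac{c_{u_{i-1}}+c_{u_i}}{w_i}$, the hypothesis that the limiting curvature on every edge of $P$ equals $\kappa$ turns into the system
\begin{equation*}
\kappa\,w_i=c_{i-1}+c_i,\qquad i=1,\dots,l,
\end{equation*}
where I abbreviate $c_j:=c_{u_j}$ and let $w_i$ denote the limiting weight of the $i$-th edge.

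Next I would solve this recurrence. Rewriting it as $c_i=\kappa w_i-c_{i-1}$ and iterating gives
\begin{equation*}
c_i=\kappa\bigl(w_i-w_{i-1}+\cdots+(-1)^{i-1}w_1\bigr)+(-1)^i c_0,\qquad i=0,\dots,l,
\end{equation*}
so that the whole vector $(c_0,\dots,c_l)$ is pinned down by the single boundary value $c_0$. Evaluating this at $i=l$ and then eliminating the two free constants $c_0,c_l$ by means of the boundary data produces exactly one linear relation among $w_1,\dots,w_l$, which is what the three cases assert.

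The boundary values are read off from the two terminal edges. If $u_j$ ($j\in\{0,l\}$) is a leaf vertex, then $d_{u_j}=1$ and $D_{u_j}=1/w$ for the terminal weight $w$, whence $c_j=w$ (equivalently $\kappa_{u_j\to\cdot}=1$). If instead the terminal edge is an internal edge adjacent to a cutting edge, then that cutting edge has vanishing limit weight, so $D_{u_j}(t)\to\infty$ and therefore $c_j=\lim_{t\to\infty}\frac{2-d_{u_j}}{D_{u_j}(t)}=0$. Substituting the three boundary pairs $(c_0,c_l)=(0,0)$, $(0,w_l)$, and $(w_1,w_l)$ into the closed form for $c_l$ and simplifying yields, after multiplying through by $(-1)^{l-1}$, precisely identities (1), (2), and (3); the extra $-1$ attached to a leaf terminal weight is exactly the coefficient $(\kappa-1)$ appearing there.

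I expect the main obstacle to be the rigorous justification of the limiting boundary value $c_j=0$ at an internal terminal edge, and, more generally, the legitimacy of passing to the limit $t\to\infty$ termwise in $\kappa\,w_i=c_{i-1}+c_i$. This requires knowing that for each interior vertex $u_j$ the quantity $D_{u_j}(t)$ converges to a finite positive limit, which follows from convergence of the weights (Theorem~\ref{thm:foundproperties}) together with positivity of all subtree edges, whereas for a cutting-edge endpoint the divergence $D_{u_j}(t)\to\infty$ is forced by the zero limit weight of the cutting edge. Once these limits are controlled, the remaining work is the elementary, though sign-sensitive, bookkeeping of the alternating sums in the three cases.
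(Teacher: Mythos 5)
Your proposal is correct and follows essentially the same route as the paper: the paper likewise writes the limiting constant-curvature equations along the path with the terminal vertex contributions set to $0$ (cutting-edge end) or $1$ (leaf end), multiplies each equation by $w_i$, and substitutes recursively, which is exactly your recurrence $\kappa w_i = c_{i-1}+c_i$ with boundary values $c_j \in \{0, w\}$. Your version merely makes explicit the vertex quantity $c_u = \frac{2-d_u}{D_u}$, the closed-form alternating solution, and the justification of the boundary limits, all of which the paper leaves implicit.
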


\begin{proof}
We prove the first case, as the reasoning for the other two cases is analogous. Consider a maximal path \(P = u_0 u_1 \ldots u_l\), where \(l \geq 2\). At the limit, the system of equations governing the path is:
\begin{align*}
    0 + \frac{2 - d_{u_1}}{w_1 D_1} &= \kappa, \\
    \frac{2 - d_{u_1}}{w_2 D_1} + \frac{2 - d_{u_2}}{w_2 D_2} &= \kappa, \\
    \frac{2 - d_{u_2}}{w_3 D_2} + \frac{2 - d_{u_3}}{w_3 D_3} &= \kappa, \\
    &\vdots \\
    \frac{2 - d_{u_{l-2}}}{w_{l-1} D_{l-2}} + \frac{2 - d_{u_{l-1}}}{w_{l-1} D_{l-1}} &= \kappa, \\
    \frac{2 - d_{u_{l-1}}}{w_l D_{l-1}} + 0 &= \kappa.
\end{align*}

Multiplying each equation by the respective edge weight \(w_i\) and substituting recursively, we obtain:
\[
\kappa w_1 - \kappa w_2 + \kappa w_3 - \cdots + (-1)^{l+1} \kappa w_l = 0.
\]
The other cases follow similarly by adjusting the contributions of terminal leaf edges.
\end{proof}

\section{Appendix}\label{appen}
 
\begin{example}
We present experimental results of the unnormalized Ricci flow on: two caterpillar tree trees $T_1, T_3$ shown in  Figure \ref{fig:22} and Figure \ref{fig:t''}  and a non-caterpillar tree $T_2$ shown in Figure  \ref{fig:2}. 

The two trees $T_1$ and $T_2$ differ by only a single edge. On the internal edges, $T_1$ exhibits the curvature approaching $0$, with the unnormalized edge weights increasing to finite values. In contrast, in the $T_2$, the curvature tends toward $-1/3$, with the unnormalized weights growing unbounded.

Both $T_1$ and $T_3$ are caterpillar trees, differing by a single edge $x_3x_6$. Both converge to zero constant curvature under finite unnormalized weight. In $T_3$, the curvature of the leaf $x_3x_4$ initially rises from negative to positive before returning to zero, causing the unnormalized weight to increase and then decrease.

\begin{figure}[H]
\begin{minipage}[t]{0.3\textwidth}
 \centering
\begin{tikzpicture}[
  every node/.style={circle, draw, fill=white, inner sep=1pt, minimum size=0.2cm},
  leaf/.style={circle, draw, fill=black, inner sep=1pt, minimum size=0.2cm}
]

\node (x5) at (0,0) {$x_5$};
\node (x3) at (1,0) {$x_3$};
\node (x2) at (2,0) {$x_2$};
\node (x1) at (3,1) {$x_1$};
\node (u1) at (3,-1) {$u_1$};

\node (x4) at (1,2) {$x_4$};
\node (u3) at (-1,-1) {$u_3$};
\node (u2) at (-1,1) {$u_2$};

\draw (x3) -- (x2);
\draw (x3) -- (x5);
\draw (x1) -- (x2);
\draw (u1) -- (x2);
\draw (x3) -- (x4);
\draw (u2) -- (x5);
\draw (u3) -- (x5);

\end{tikzpicture}
\end{minipage}
\hfill
  \begin{minipage}[t]{0.6\textwidth}
 \centering
\includegraphics[width=0.8\linewidth]{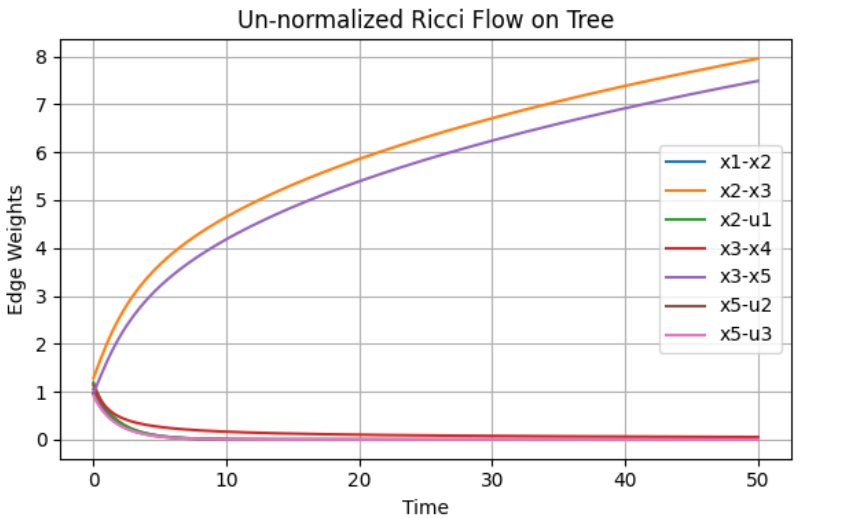}
\end{minipage}
 \caption{An evolution of the unnormalized Ricci flow on the left tree $T_1$
  with constant curvature $0$}\label{fig:22} 
\end{figure}

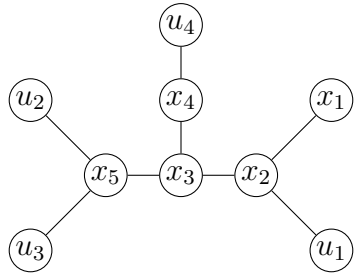
\begin{figure}[!h]
\centering
\begin{tikzpicture}[
  every node/.style={circle, draw, fill=white, inner sep=1pt, minimum size=0.4cm}
]

\node (x5) at (0,0) {$x_5$};
\node (x3) at (1,0) {$x_3$};
\node (x2) at (2,0) {$x_2$};
\node (x1) at (3,1) {$x_1$};
\node (u1) at (3,-1) {$u_1$};

\node (x4) at (1,1) {$x_4$};
\node (u3) at (-1,-1) {$u_3$};
\node (u2) at (-1,1) {$u_2$};
\node (u4) at (1,2) {$u_4$};

\draw (x3) -- (x2);
\draw (x3) -- (x5);
\draw (x1) -- (x2);
\draw (u1) -- (x2);
\draw (x3) -- (x4);
\draw (u2) -- (x5);
\draw (u3) -- (x5);
\draw (u4) -- (x4);

\end{tikzpicture}
\caption{Tree $T_2$}\label{fig:2} 
\end{figure}

\begin{figure}[!h]
  \begin{minipage}[t]{0.45\textwidth}
 \centering
 \includegraphics[width=0.9\linewidth]{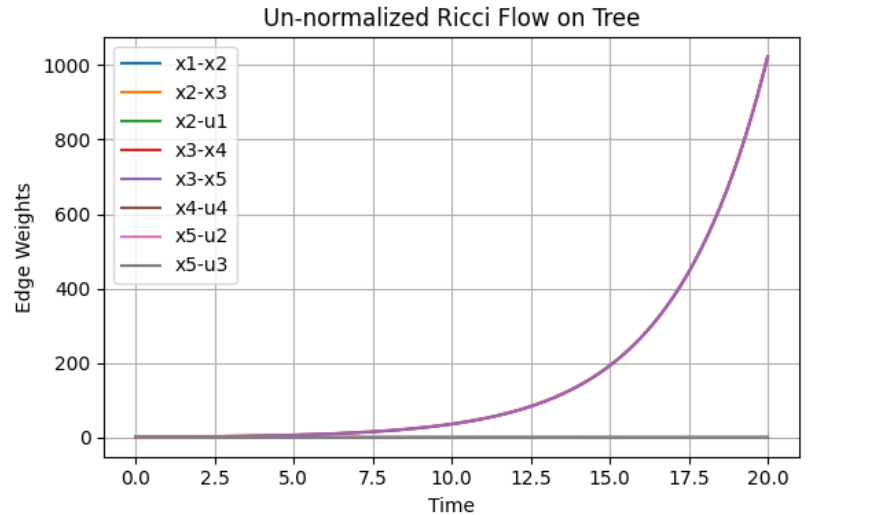}
\end{minipage}
  \hfill
  \begin{minipage}[t]{0.45\textwidth}
 \centering
\includegraphics[width=0.9\linewidth]{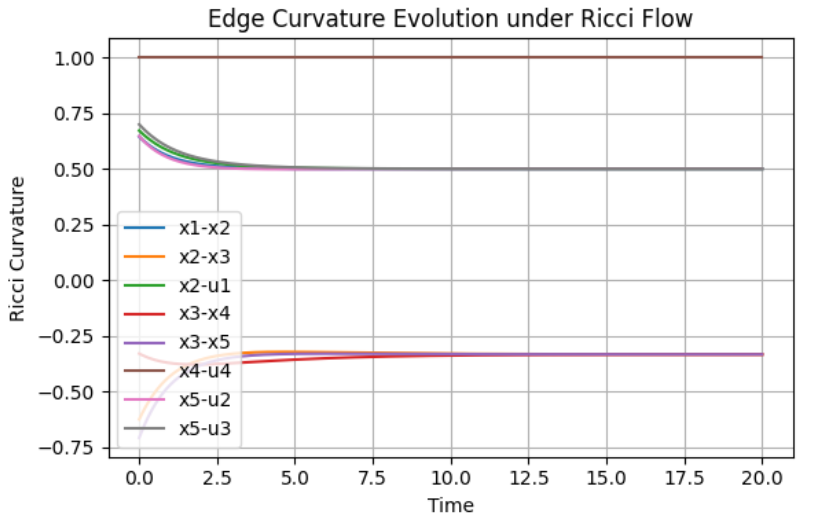}
      \end{minipage}
          \caption{An evolution of the unnormalized Ricci flow on the tree $T_2$
  with constant curvature $-\frac{1}{3}$.}\label{fig:3}
\end{figure}

\begin{figure}[H]
\centering
\begin{tikzpicture}[
  every node/.style={circle, draw, fill=white, inner sep=1pt, minimum size=0.2cm},
  leaf/.style={circle, draw, fill=black, inner sep=1pt, minimum size=0.2cm}
]

\node (x5) at (0,0) {$x_5$};
\node (x3) at (1,0) {$x_3$};
\node (x2) at (2,0) {$x_2$};
\node (x1) at (3,1) {$x_1$};
\node (u1) at (3,-1) {$u_1$};

\node (x4) at (0.5,2) {$x_4$};
\node (x6) at (1.5,2) {$x_6$};
\node (u3) at (-1,-1) {$u_3$};
\node (u2) at (-1,1) {$u_2$};

\draw (x3) -- (x2);
\draw (x3) -- (x5);
\draw (x1) -- (x2);
\draw (u1) -- (x2);
\draw (x3) -- (x4);
\draw (x3) -- (x6);
\draw (u2) -- (x5);
\draw (u3) -- (x5);

\end{tikzpicture}
\caption{Tree $T_3$}\label{fig:t''} 
\end{figure}
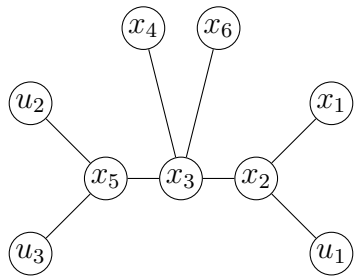

\bigskip

\begin{figure}[H]
    \centering
\includegraphics[width=0.45\linewidth]{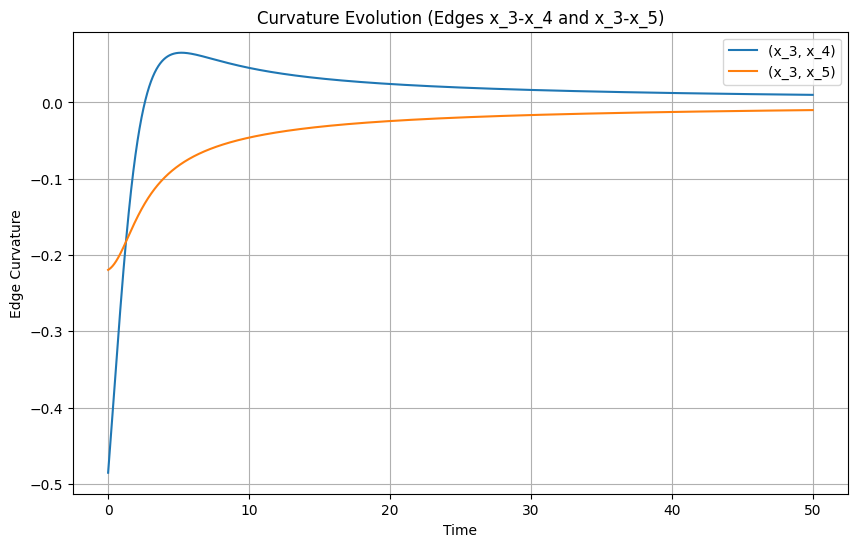}
  \includegraphics[width=0.45\linewidth]{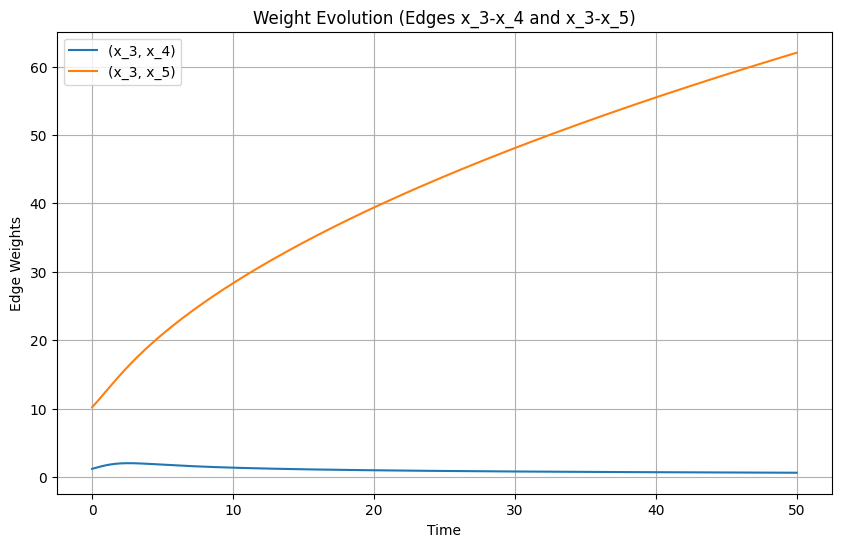}
          \caption{An evolution of the unnormalized Ricci flow on the tree $T_3$  with constant curvature $0$. }\label{fig:tree0curvature4} 
\end{figure}
For above caterpillar tree  $T_3$, numerical simulations indicate that the weight of the edge $x_3x_5$ increases monotonically over time, showing no sign of convergence to finite values. On a long time scale, the curve continues to rise, supporting the conjecture that its limit is unbounded. 

\end{example}

{\bf Acknowledgements:} 

We are grateful to Professor S.-T. Yau for proposing the problem of the convergence of Ricci flow on graphs and for numerous insightful discussions on the subject.

S. Bai is supported by NSFC, no.12301434. B. Hua is supported by NSFC, no.12371056.
Y. Lin is supported  by NSFC, no. 12471088. S. Liu is supported by NSFC, no.12001536, 12371102.

\bibliographystyle{plain}
\bibliography{citations}

@book {DG2003,
    AUTHOR = {Glickenstein, David Alan},
     TITLE = {Precompactness of the {R}icci flow and a maximum principle on
              combinatorial {Y}amabe flow},
      NOTE = {Thesis (Ph.D.)--University of California, San Diego},
 PUBLISHER = {ProQuest LLC, Ann Arbor, MI},
      YEAR = {2003},
     PAGES = {104},
      ISBN = {978-0496-39041-0},
   MRCLASS = {99-05},
  MRNUMBER = {2704667},
       URL =
              {http://gateway.proquest.com/openurl?url_ver=Z39.88-2004&rft_val_fmt=info:ofi/fmt:kev:mtx:dissertation&res_dat=xri:pqdiss&rft_dat=xri:pqdiss:3091326},
}

@article {KL08,
    AUTHOR = {Kleiner, Bruce and Lott, John},
     TITLE = {Notes on {P}erelman's papers},
   JOURNAL = {Geom. Topol.},
  FJOURNAL = {Geometry \& Topology},
    VOLUME = {12},
      YEAR = {2008},
    NUMBER = {5},
     PAGES = {2587--2855},
      ISSN = {1465-3060,1364-0380},
   MRCLASS = {53C44 (57M40)},
  MRNUMBER = {2460872},
MRREVIEWER = {G\'erard\ Besson},
       DOI = {10.2140/gt.2008.12.2587},
       URL = {https://doi.org/10.2140/gt.2008.12.2587},
}

@article {CZ06,
    AUTHOR = {Cao, Huai-Dong and Zhu, Xi-Ping},
     TITLE = {A complete proof of the {P}oincar\'e{} and geometrization
              conjectures---application of the {H}amilton-{P}erelman theory
              of the {R}icci flow},
   JOURNAL = {Asian J. Math.},
  FJOURNAL = {Asian Journal of Mathematics},
    VOLUME = {10},
      YEAR = {2006},
    NUMBER = {2},
     PAGES = {165--492},
      ISSN = {1093-6106,1945-0036},
   MRCLASS = {53C44 (53C21 57M40 57M50)},
  MRNUMBER = {2233789},
MRREVIEWER = {John\ Urbas},
       DOI = {10.4310/AJM.2006.v10.n2.a2},
       URL = {https://doi.org/10.4310/AJM.2006.v10.n2.a2},
}

@article{HLW24,
  title={A version of Bakry-Émery Ricci flow on a finite graph
},
  author={Bobo Hua and Yong Lin and Tao Wang},
  journal={arXiv},
  year={2024},
  volume={arXiv:2402.07475},
}

@book {RFTA1,
    AUTHOR = {Chow, Bennett and Chu, Sun-Chin and Glickenstein, David and
              Guenther, Christine and Isenberg, James and Ivey, Tom and
              Knopf, Dan and Lu, Peng and Luo, Feng and Ni, Lei},
     TITLE = {The {R}icci flow: techniques and applications. {P}art {I}},
    SERIES = {Mathematical Surveys and Monographs},
    VOLUME = {135},
      NOTE = {Geometric aspects},
 PUBLISHER = {American Mathematical Society, Providence, RI},
      YEAR = {2007},
     PAGES = {xxiv+536},
      ISBN = {978-0-8218-3946-1; 0-8218-3946-2},
   MRCLASS = {53C44 (35K55 53C21 57M40)},
  MRNUMBER = {2302600},
MRREVIEWER = {James\ Alexander\ McCoy},
       DOI = {10.1090/surv/135},
       URL = {https://doi.org/10.1090/surv/135},
}

@book {RFTA2,
    AUTHOR = {Chow, Bennett and Chu, Sun-Chin and Glickenstein, David and
              Guenther, Christine and Isenberg, James and Ivey, Tom and
              Knopf, Dan and Lu, Peng and Luo, Feng and Ni, Lei},
     TITLE = {The {R}icci flow: techniques and applications. {P}art {II}},
    SERIES = {Mathematical Surveys and Monographs},
    VOLUME = {144},
      NOTE = {Analytic aspects},
 PUBLISHER = {American Mathematical Society, Providence, RI},
      YEAR = {2008},
     PAGES = {xxvi+458},
      ISBN = {978-0-8218-4429-8},
   MRCLASS = {53C44 (35K55 53C21)},
  MRNUMBER = {2365237},
MRREVIEWER = {James\ Alexander\ McCoy},
       DOI = {10.1090/surv/144},
       URL = {https://doi.org/10.1090/surv/144},
}

@book {RFTA3,
    AUTHOR = {Chow, Bennett and Chu, Sun-Chin and Glickenstein, David and
              Guenther, Christine and Isenberg, James and Ivey, Tom and
              Knopf, Dan and Lu, Peng and Luo, Feng and Ni, Lei},
     TITLE = {The {R}icci flow: techniques and applications. {P}art {III}.
              {G}eometric-analytic aspects},
    SERIES = {Mathematical Surveys and Monographs},
    VOLUME = {163},
 PUBLISHER = {American Mathematical Society, Providence, RI},
      YEAR = {2010},
     PAGES = {xx+517},
      ISBN = {978-0-8218-4661-2},
   MRCLASS = {53C44 (35K08 35K55)},
  MRNUMBER = {2604955},
MRREVIEWER = {James\ Alexander\ McCoy},
       DOI = {10.1090/surv/163},
       URL = {https://doi.org/10.1090/surv/163},
}

@book {RFTA4,
    AUTHOR = {Chow, Bennett and Chu, Sun-Chin and Glickenstein, David and
              Guenther, Christine and Isenberg, James and Ivey, Tom and
              Knopf, Dan and Lu, Peng and Luo, Feng and Ni, Lei},
     TITLE = {The {R}icci flow: techniques and applications. {P}art {IV}},
    SERIES = {Mathematical Surveys and Monographs},
    VOLUME = {206},
      NOTE = {Long-time solutions and related topics},
 PUBLISHER = {American Mathematical Society, Providence, RI},
      YEAR = {2015},
     PAGES = {xx+374},
      ISBN = {978-0-8218-4991-0},
   MRCLASS = {53C44 (35K55)},
  MRNUMBER = {3409114},
MRREVIEWER = {James\ Alexander\ McCoy},
       DOI = {10.1090/surv/206},
       URL = {https://doi.org/10.1090/surv/206},
}

@book {HRF06,
    AUTHOR = {Chow, Bennett and Lu, Peng and Ni, Lei},
     TITLE = {Hamilton's {R}icci flow},
    SERIES = {Graduate Studies in Mathematics},
    VOLUME = {77},
 PUBLISHER = {American Mathematical Society, Providence, RI; Science Press
              Beijing, New York},
      YEAR = {2006},
     PAGES = {xxxvi+608},
      ISBN = {978-0-8218-4231-7; 0-8218-4231-5},
   MRCLASS = {53C44 (35K55 53C21 57M40 57M50)},
  MRNUMBER = {2274812},
MRREVIEWER = {James\ Alexander\ McCoy},
       DOI = {10.1090/gsm/077},
       URL = {https://doi.org/10.1090/gsm/077},
}

@book {MT14,
    AUTHOR = {Morgan, John and Tian, Gang},
     TITLE = {The geometrization conjecture},
    SERIES = {Clay Mathematics Monographs},
    VOLUME = {5},
 PUBLISHER = {American Mathematical Society, Providence, RI; Clay
              Mathematics Institute, Cambridge, MA},
      YEAR = {2014},
     PAGES = {x+291},
      ISBN = {978-0-8218-5201-9},
   MRCLASS = {53C44 (53C23)},
  MRNUMBER = {3186136},
MRREVIEWER = {Ryan\ D.\ Budney},
}

@book {MT07,
    AUTHOR = {Morgan, John and Tian, Gang},
     TITLE = {Ricci flow and the {P}oincar\'e{} conjecture},
    SERIES = {Clay Mathematics Monographs},
    VOLUME = {3},
 PUBLISHER = {American Mathematical Society, Providence, RI; Clay
              Mathematics Institute, Cambridge, MA},
      YEAR = {2007},
     PAGES = {xlii+521},
      ISBN = {978-0-8218-4328-4},
   MRCLASS = {57M40 (53C44)},
  MRNUMBER = {2334563},
MRREVIEWER = {Terence\ Tao},
}

@article {BS09,
    AUTHOR = {Brendle, Simon and Schoen, Richard},
     TITLE = {Manifolds with {$1/4$}-pinched curvature are space forms},
   JOURNAL = {J. Amer. Math. Soc.},
  FJOURNAL = {Journal of the American Mathematical Society},
    VOLUME = {22},
      YEAR = {2009},
    NUMBER = {1},
     PAGES = {287--307},
      ISSN = {0894-0347,1088-6834},
   MRCLASS = {53C20 (53C44)},
  MRNUMBER = {2449060},
MRREVIEWER = {Fr\'ed\'eric\ Robert},
       DOI = {10.1090/S0894-0347-08-00613-9},
       URL = {https://doi.org/10.1090/S0894-0347-08-00613-9},
}

@article{Gubser:2016htz,
    author = "Gubser, Steven S. and Heydeman, Matthew and Jepsen, Christian and Marcolli, Matilde and Parikh, Sarthak and Saberi, Ingmar and Stoica, Bogdan and Trundy, Brian",
    title = "{Edge length dynamics on graphs with applications to $p$-adic AdS/CFT}",
    eprint = "1612.09580",
    archivePrefix = "arXiv",
    primaryClass = "hep-th",
    reportNumber = "BRX-TH-6306, CALT-TH-2016-042, PUPT-2516",
    doi = "10.1007/JHEP06(2017)157",
    journal = "JHEP",
    volume = "06",
    pages = "157",
    year = "2017"
}

@article{RuoweiLiFlorentin,
  title={The convergence and uniqueness of a discrete-time nonlinear Markov chain},
  author={Ruowei Li and Florentin M\"unch},
  journal={arXiv preprint arXiv:2407.00314},
  year={2025},
  url={https://arxiv.org/abs/XXXX.XXXX}
}

@article{Liu2023BakryEmery,
  author    = {David Cushing and Supanat Kamtue and Shiping Liu and Florentin Münch and Norbert Peyerimhoff and Hugo Benedict Snodgrass},
  title     = {Bakry–Émery Curvature Sharpness and Curvature Flow in Finite Weighted Graphs: Implementation},
  journal   = {Axioms},
  year      = {2023},
  volume    = {12},
  number    = {6},
  pages     = {577},
  doi       = {10.3390/axioms12060577},
  url       = {https://www.mdpi.com/2075-1680/12/6/577}
}

@article{superflow,
title = {Super Ricci flows for weighted graphs},
journal = {Journal of Functional Analysis},
volume = {279},
number = {6},
pages = {108607},
year = {2020},
issn = {0022-1236},
doi = {https://doi.org/10.1016/j.jfa.2020.108607},
url = {https://www.sciencedirect.com/science/article/pii/S0022123620301506},
author = {Matthias Erbar and Eva Kopfer},
}

@article{NLLG,
author = {Ni, Chien-Chun and Lin, Yu-Yao and Luo, Feng and Gao, Jie},
year = {2019},
month = {07},
pages = {},
title = {Community Detection on Networks with Ricci Flow},
volume = {9},
journal = {Scientific Reports},
doi = {10.1038/s41598-019-46380-9}
}

@article{YongLinYau,
author = {Yong Lin and Shing-Tung Yau},
year = {2010},
pages = {345-358},
title = { Ricci curvature and eigenvalue estimate on locally finite graphs},
volume = {17},
journal = {Mathematical Research Letter},
}

@article{hami1982,
author = {Hamilton, Richard},
year = {1982},
month = {06},
pages = {255-362},
title = {Three-manifolds with positive Ricci curvature},
volume = {17},
journal = {Journal of Differential Geometry},
doi = {10.4310/jdg/1214436922}
}

@article{Ollivier,
author = {Ollivier, Yann},
year = {2009},
month = {02},
pages = {810-864},
title = {Ricci curvature of Markov chains on metric spaces},
volume = {256},
journal = {Journal of Functional Analysis},
doi = {10.1016/j.jfa.2008.11.001}
}

@article{blhy, 
author = {Bai, Shuliang and Huang, An and Lu, Linyuan and Yau, Shing-Tung},
title = {On the Sum of Ricci-Curvatures for Weighted Graphs},
journal={Pure and Applied Mathematics Quarterly},
 volume={17}, number={5}, pages={1599-1617}, year={2021}
}

@article{LLY,
author = {Lin, Yong and Lu, Linyuan and Yau, Shing-Tung},
year = {2011},
month = {12},
pages = {},
title = {Ricci curvature of graphs},
volume = {63},
journal = {Tohoku Mathematical Journal},
doi = {10.2748/tmj/1325886283}
}

@article{bailin,
author={Bai, Shuliang and Lin, Yong and Lu, Linyuan and Wang, Zhiyu and Yau, Shing-Tung},
volume = {146},
number={4},
title={Ollivier Ricci-flow on weighted graphs},
year={2024},
journal = {American Journal of Mathematics}

}

@article{GP1,
author={Grisha Perelman}, 
title={The entropy formula for the Ricci flow and its geometric applications}, 
journal ={arXiv:math/0211159v1},
volume={},
year={2002},
pages={}
}

@article{GP2,
author={Grisha Perelman}, 
title={Ricci flow with surgery on three-manifolds}, 
journal ={arXiv:math/0303109v1},
volume={},
year={2003},
pages={}
}

@article{GP3,
author={Grisha Perelman}, 
title={Finite extinction time for the solutions to the Ricci flow on certain three-manifolds}, 
journal ={ arXiv:math/0307245v1},
volume={},
year={2003},
pages={}
}

@article{F,
author={R. Forman}, 
title={Bochner’s method for cell complexes and combinatorial Ricci curvature}, 
journal ={Discrete \& Computational
Geometry},
volume={29(3)},
year={2003},
pages={323–374}
}

@article{comflow,
author = {Ke Feng and Huabin Ge and Bobo Hua},
title = {{Combinatorial Ricci flows and the hyperbolization of a class of compact $3$–manifolds}},
volume = {26},
journal = {Geometry \& Topology},
number = {3},
publisher = {MSP},
pages = {1349-1384},
year = {2022},
URL = {https://doi.org/10.2140/gt.2022.26.1349}
}

@article{CombinatorialCalabiFlows,
  author    = {Ge, Huabin},
  title     = {Combinatorial Calabi Flows on Surfaces},
  journal   = {Transactions of the American
Mathematical Society},
  volume    = {370},
  number    = {2},
  pages     = {1377-1391},
  year      = {2018},
  publisher = {American Mathematical Society},
}

@article{ge2021circle,
  title={Circle patterns on surfaces of finite topological type},
  author={Ge, Huabin and Hua, Bobo and Zhou, Ze},
  journal={American Journal of Mathematics},
  volume={143},
  number={5},
  pages={1397-1430},
  year={2021},
  publisher={Johns Hopkins University Press}
}

@article{gu2018discreteII,
  title={A discrete uniformization theorem for polyhedral surfaces II},
  author={Gu, Xianfeng and Guo, Ren and Luo, Feng and Sun, Jian and Wu, Tianqi},
  journal={Journal of Differential Geometry},
  volume={109},
  number={3},
  pages={431-466},
  year={2018}
}

@article{flowonsurface,
author = {Bennett Chow and Feng Luo},
title = {{Combinatorial Ricci Flows on Surfaces}},
volume = {63},
journal = {Journal of Differential Geometry},
number = {1},
publisher = {Lehigh University},
pages = {97-129},
year = {2003},
doi = {10.4310/jdg/1080835659},
URL = {https://doi.org/10.4310/jdg/1080835659}
}

@article{GLICKENSTEIN2005791,
title = {A combinatorial Yamabe flow in three dimensions},
journal = {Topology},
volume = {44},
number = {4},
pages = {791-808},
year = {2005},
issn = {0040-9383},
doi = {https://doi.org/10.1016/j.top.2005.02.001},
url = {https://www.sciencedirect.com/science/article/pii/S0040938305000224},
author = {David Glickenstein},
}

@article{NLGG18,
author={Chien-Chun Ni and Yu-Yao Lin and Jie Gao and Xianfeng David Gu}, 
title={Network Alignment by Discrete
Ollivier-Ricci Flow}, 
journal ={Graph Drawing and Network Visualization},
volume={11282},
year={2018},
pages={447–462}
}

@article{10.1093/comnet/cnw030,
    author = {Weber, Melanie and Saucan, Emil and Jost, Jürgen},
    title = {Characterizing complex networks with Forman-Ricci curvature and associated geometric flows},
    journal = {Journal of Complex Networks},
    volume = {5},
    number = {4},
    pages = {527-550},
    year = {2017},
    month = {01},
   }

@Article{axioms5040026,
AUTHOR = {Weber, Melanie and Jost, Jürgen and Saucan, Emil},
TITLE = {Forman-Ricci Flow for Change Detection in Large Dynamic Data Sets},
JOURNAL = {Axioms},
VOLUME = {5},
YEAR = {2016},
NUMBER = {4},
ARTICLE-NUMBER = {26},
URL = {https://www.mdpi.com/2075-1680/5/4/26},
ISSN = {2075-1680}}

@article{MaYang2025ModifiedRicci,
  author    = {Jicheng Ma and Yunyan Yang},
  title     = {A modified Ricci flow on arbitrary weighted graph},
  journal   = {Journal of Geometric Analysis},
  year      = {2025},
  volume    = {35},
  pages     = {332},
  doi       = {10.1007/s12220-025-02164-4}
}

@article{MaYang2025PiecewiseRicci,
  author  = {Jicheng Ma and Yunyan Yang},
  title  = {Piecewise-linear Ricci curvature flows on weighted graphs},
  year  = {2025},
  journal  = {arXiv: 2505.15395}
 }

\end{document}